\theoremstyle{plain}
\newtheorem{theorem}{Theorem}[section]
\newtheorem*{theorem 2.12}{Theorem 2.12}
\newtheorem{lemma}[theorem]{Lemma}
\newtheorem{definition}[theorem]{Definition}
\newtheorem{example}[theorem]{Example}
\newtheorem{corollary}[theorem]{Corollary}
\theoremstyle{remark}
\newtheorem{remark}{Remark}
\begin{document}
\begin{frontmatter}
\title{Onsager-Machlup functional for stochastic lattice dynamical systems driven by time-varying noise}
%\title{A sample article title with some additional note\thanksref{t1}}
\runtitle{OM functional for SLDSs}
%\thankstext{T1}{A sample additional note to the title.}

\begin{aug}
	%%%%%%%%%%%%%%%%%%%%%%%%%%%%%%%%%%%%%%%%%%%%%%%
	%% ORCID can be inserted by command:         %%
	%% \orcid{0000-0000-0000-0000}               %%
	%%%%%%%%%%%%%%%%%%%%%%%%%%%%%%%%%%%%%%%%%%%%%%%
	\author[A]{\inits{F.}\fnms{Xinze}~\snm{Zhang}\ead[label=e1]{xzzhang21@mails.jlu.edu.cn}},
	\author[A,B]{\inits{S.}\fnms{Yong}~\snm{Li}\ead[label=e2]{liyong@jlu.edu.cn}}\thanks{corresponding author.}.
	%%%%%%%%%%%%%%%%%%%%%%%%%%%%%%%%%%%%%%%%%%%%%%
	%% Addresses                                %%
	%%%%%%%%%%%%%%%%%%%%%%%%%%%%%%%%%%%%%%%%%%%%%%
	\address[A]{School of Mathematics, Jilin University, ChangChun, People's Republic of China}
	
	\address[B]{Center for Mathematics and Interdisciplinary Sciences,\\    ~~~ Northeast Normal University, ChangChun, People's Republic of China\\
		\printead{e1,e2}}
	
\end{aug}

\begin{abstract}
This paper investigates the Onsager-Machlup functional of stochastic lattice dynamical systems (SLDSs)  driven by time-varying noise. We extend the Onsager-Machlup functional from finite-dimensional to infinite-dimensional systems, and from constant to time-varying diffusion coefficients. We first verify the existence and uniqueness of general SLDS solutions in the infinite sequence weighted space $ l^2_{\rho} $. Building on this foundation, we employ techniques such as the infinite-dimensional Girsanov transform, Karhunen-Loève expansion, and probability estimation of Brownian motion balls to derive the Onsager-Machlup functionals for SLDSs in $ l^2_{\rho} $ space. Additionally, we use a numerical example to illustrate our theoretical findings, based on the Euler Lagrange equation corresponding to the Onsage Machup functional.
\end{abstract}

\begin{keyword}[class=MSC]
\kwd[Primary ]{82C35}
\kwd{37L60}
\kwd{60H15}
\kwd[; secondary ]{37H10}
\end{keyword}

\begin{keyword}
\kwd{Onsager-Machlup functional}
\kwd{stochastic lattice dynamical systems}
\kwd{Girsanov transformation}
\kwd{Karhunen-Loève expansion}
\kwd{Euler Lagrange equation}
\end{keyword}

\end{frontmatter}

\section*{Statements and Declarations}
\begin{itemize}
	\item Ethical approval\\
	Not applicable.
	
	\item Competing interests\\
	The authors declare that they have no competing interests.
	
	\item Authors' contributions\\
	X. Zhang wrote the main manuscript text and prepared figures 1-3. Y. Li provided ideas and explored specific research methods. All authors reviewed the manuscript.
	
	\item Availability of data and materials\\
	Not applicable.
\end{itemize}

\section{Introduction}
Stochastic dynamical systems combine the characteristics of deterministic dynamics and stochastic processes, commonly used to model and analyze uncertainties and randomness in fields such as physics, chemistry, and biological sciences. Due to stochastic fluctuations, these systems may undergo transitions between two metastable states. Consequently, a critical problem is to determine the most probable transition path among all possible smooth paths connecting two given metastable states. A fundamental and significant tool in this context is the Onsager-Machlup functional, originally introduced by Onsager and Machlup in 1953. They defined it as a probabilistic density functional for diffusion processes with linear drift and constant diffusion coefficients. Subsequently, Tisza and Manning in 1957 extended the application of the Onsager-Machlup functional to nonlinear equations. Concurrently, Stratonovich in the same year introduced a rigorous mathematical approach, further advancing the theoretical framework of the Onsager-Machlup functional. Subsequent developments of the Onsager-Machlup functional were presented in \cite{4,10,11,12,13,14,16}, etc.

The Onsager-Machlup functional method for stochastic differential equations has seen rapid development in this century. Moret and Nualart \cite{17} investigated the Onsager-Machlup functional for stochastic differential equations driven by fractional Brownian motion. Bardina et al. \cite{18} computed the Onsager-Machlup functional for a stochastic evolution equation with additive noise using $ L^2 $-type techniques. Chao and Duan \cite{19} derived the Onsager-Machlup function for a class of stochastic dynamical systems under both Lévy noise and Brownian noise. Li and Li \cite{20} proved that the $\Gamma$-limit of the Onsager-Machlup functional is the geometric form of the Freidlin-Wentzell functional in a proper time scale. Zhang and Li \cite{24} studied the Onsager-Machlup functional for stochastic differential equations with time-varying noise of the $\alpha$-Hölder type, where $0 < \alpha < \frac{1}{4}$.

Time-varying coefficient noise provides flexibility, allowing for more accurate modeling of real-world phenomena where noise intensity changes over time. This captures dynamic characteristics that constant coefficient noise models cannot represent. This feature is especially important in fields such as financial mathematics \cite{21}, meteorological models \cite{22}, and biomedical engineering \cite{23}. Consequently, time-varying coefficient noise has unparalleled advantages over constant coefficient noise in practical applications. Despite its importance, there has been limited research addressing the Onsager-Machlup functional for stochastic differential equations driven by time-varying coefficient noise, particularly in infinite-dimensional settings.

In our recent work \cite{24}, we first examined the finite-dimensional case of the Onsager-Machlup functional for stochastic differential equations driven by time-varying coefficient noise. However, extending this result to the infinite-dimensional case presents significant challenges, and the method used in \cite{24} is not feasible in this context. Our current study aims to find new approaches to overcome these challenges. In this paper, we define the infinite sequence weighted space $ l^2_{\rho} $ as presented in Definition $\ref{D2.3}$. We investigate the Onsager-Machlup functional for the following SLDS in the $ l^2_{\rho} $ space:
\begin{equation}
	\begin{aligned}
		\frac{\mathrm{d} u_{i}(t)}{\mathrm{d} t}
		= \nu \left( u_{i -1} - 2u_{i} + u_{i + 1} \right) - \lambda u_{i} - f\left( u_{i} \right) + g_{i} + q_{i}(t) \frac{\mathrm{d} w_{i}(t)}{\mathrm{d} t}, \quad i \in \mathbb{Z}
	\end{aligned}\notag
\end{equation}
with initial condition
\begin{equation}
	\begin{aligned}
		u_{i}(0) = u_{0,i}, \quad i \in \mathbb{Z},
	\end{aligned}\notag
\end{equation}
where $u = \left( u_{i} \right)_{i \in \mathbb{Z}} \in l^{2}_{\rho}$, $\mathbb{Z}$ denotes the integer set, $\nu$ and $\lambda$ are positive constants, $f \in C^{2}(\mathbb{R})$, $g = \left( g_{i} \right)_{i \in \mathbb{Z}} \in l^{2}_{\rho}, q(t) = \left( q_i(t) \right)_{i \in \mathbb{Z}} \in l^{2}_{\rho}, q_i(t) \in C^{2}(\mathbb{R})$, and $ W_{i}: i \in \mathbb{Z} $ are independent two-side real valued standard Wiener processes. 

In Theorem \ref{T4.1} of Section 4, we present our main result, where we derive the Onsager-Machlup functional $\int_{0}^{1} OM(\varphi, \dot{\varphi}) \,{\rm d}t$ for the SLDS driven by time-varying noise. This functional can still be interpreted as providing a Lagrangian for smooth paths. The primary form of $\int_{0}^{1} OM(\varphi, \dot{\varphi}) \,{\rm d}t$ is as follows:

\begin{equation}
	\int_{0}^{1} OM(\varphi, \dot{\varphi}) \,{\rm d}t = \int_{0}^{1} \Vert Q(t)^{-1} \left({\dot{\varphi}(t) + A\varphi(t) - F(\varphi(t))} \right)\Vert^2_{\rho} \,{\rm d}t + \int_{0}^{1} { \text{Tr}_{\rho} \left(\mathcal{D}_{\varphi(t)}F \right)} \,{\rm d}t,\notag
\end{equation}
where $(A \varphi(t))_i = - \nu \left( \varphi_{i -1}(t) - 2\varphi_{i}(t) + \varphi_{i + 1}(t) \right) + \lambda \varphi_i(t)$, $(F(\varphi(t)))_i := - f(\varphi(t)_i) + g_i$ for each $\varphi(t) \in l^2_{\rho}$, $\text{Tr}_{\rho}$ represents a weighted trace and $\mathcal{D}_{\varphi(t)} F$ represents the Fréchet derivative of $F$ at $\varphi(t)$. 

The proof of this result is fraught with difficulties, primarily due to the challenges inherent in extending from finite-dimensional systems to infinite-dimensional systems, and from constant diffusion coefficients to time-varying diffusion coefficients. These difficulties are especially pronounced in the estimation of the following Taylor's expansion:
\begin{displaymath}
	\int_{0}^{1} \big\langle {{Q(t)^{-1}}{F(v(t))}}, \,{\rm d}W(t) \big\rangle_{\rho} = \int_{0}^{1} \big\langle {{Q(t)^{-1}} {\left(F(\varphi(t)) + \mathcal{D}_{\varphi(t)} (F) W^Q(t) + R(t) \right)}}, \,{\rm d}W(t) \big\rangle_{\rho},
\end{displaymath}
where $ W(t) := W(t, \omega) := \sum_{i \in \mathbb{Z}} W_i(t) e_i $ is infinite-dimensional white noise, and $ W^{Q}(t) := W^{Q}(t, \omega) := \int_{0}^{t} \langle Q(s), \mathrm{d}W(s) \rangle $ is a time-varying stochastic process taking values in $ l^2_{\rho} $.

The first term can be handled using Lemma 2.15 provided in this article. In the second term, a double integral involving Brownian motion appears. Unlike the finite-dimensional case, we need to employ techniques such as the Karhunen-Loève expansion and Skorohod integration to transform $ W^Q(t) $ into independent $\mathcal{N}(0,1)$ random variables, ultimately deriving the final result from Lemma \ref{lmma 2.9}. The third term represents higher-order remainders, which we manage using exponential inequalities for martingales and small ball estimates for Brownian motion.

Unlike the case with constant diffusion coefficients, the inverse of the diffusion coefficient operator $ Q(t)^{-1} $ appears in the Onsager-Machlup functional. This introduces significant challenges to its derivation and necessitates further consideration of relevant constraint conditions. Notably, our results are consistent with those obtained under constant coefficient noise, and the optimal paths can be determined by solving the Euler-Lagrange equations. Thus, by leveraging the Onsager-Machlup functional and varational  principles, we can numerically identify the most probable paths between states for a broader range of stochastic dynamical systems.

The structure of this paper is as follows: In Section 2, we review some basic definitions of spaces and norms; introduce the concepts of the Onsager-Machlup function and the Onsager-Machlup functional; present the Karhunen-Loève expansion for mean-square continuous stochastic processes; and introduce several important technical lemmas. In Section 3, we verify the existence, uniqueness, and tail estimates of solutions to the stochastic lattice equations. In Section 4, we derive the Onsager-Machlup function. Finally, in Section 5, we illustrate our results with a specific example.

\section{Preliminaries}
\subsection{Basic Space}
In this section, we define some basic spaces and review some basic definitions and results on approximate limits in Wiener spaces (reference \cite{1}).
\begin{definition}
	Let $l^{2}$ represent the space of squared summable real valued sequences, specifically represented as follows:
	\begin{equation}
		\begin{aligned}
			l^{2} := \left\lbrace u := \left( u_i\right)_{i \in \mathbb{Z}}  \in l^{2} ~|~ \forall t \in \mathbb{R}, ~ \sum_{i \in \mathbb{Z}} a_i^{2} < \infty \right\rbrace ,
		\end{aligned}\notag
	\end{equation}
	where $\vert \cdot \vert$ represents the absolute value.
\end{definition}
\begin{definition}
	The $l^\infty$ space consists of all bounded real valued sequences. Specifically, a sequence $\{b_i\}$ belongs to the $l^\infty$ space if and only if the absolute values of the terms in the sequence are bounded, i.e., there exists a constant $M$ such that $|b_i| \leq M$ for all $i$.
\end{definition}
In this paper, we introduce the weighted space of infinite sequences $\left( l^2_{\rho}, \Vert \cdot \Vert_{\rho} \right)$. For the sake of simplicity and without causing confusion, we will continue to use $\Vert \cdot \Vert$ to denote $\Vert \cdot \Vert_{\rho}$.
\begin{definition}\label{D2.3}
	Let $\rho_i \in l^2$, we define a separable Hilbert space $l^2_{\rho}$, specifically represented as follows:
	\begin{equation}
		\begin{aligned}
			l^{2}_{\rho} := \left\lbrace u := \left( u_i\right)_{i \in 	\mathbb{Z}}  \in l^{2}_{\rho} ~|~ \forall t \in \mathbb{R},~ \left( u_i\right)_{i \in \mathbb{Z}}  \in l^{\infty},~\text{and} ~ \sum_{i \in \mathbb{Z}} \left( \rho_i u_i\right) ^{2} < \infty \right\rbrace
		\end{aligned}\notag
	\end{equation}
	with the norm $\Vert u \Vert_{\rho} = \left( \sum_{i \in \mathbb{Z}} \left( \rho_i u_i\right)^{2} \right)^{\frac{1}{2}} $. And the inner product $\langle u, v \rangle_{\rho}$ is defined as follows:
	\begin{displaymath}
		\langle u, v \rangle_{\rho} = \sum_{i \in \mathbb{Z}} \rho_i^2 u_i v_i \quad \text{for all}~~ u, v \in l^2_{\rho}.
	\end{displaymath}
\end{definition}
\begin{remark}
	Let $u \in l^{2}_{\rho}$, we define $\rho u = \left(\rho_i u_i\right)_{i \in \mathbb{Z}}$ and we easily know that $\rho u \in l^2$. Further, we define $\rho^2 u = \left(\rho_i^2 u_i\right)_{i \in \mathbb{Z}}$ and $\rho^2 u \in l^1 \subseteq l^2$.
\end{remark}
\begin{definition}
	Let $u(t) \in L^2([0,1], l^2_{\rho}) $. The norm in this space is defined as:
	\begin{displaymath}
		\Vert u\Vert_{L^2_{\rho}} := \Vert u\Vert_{L^2([0,1], l^2_{\rho})} = \left( \int_0^1 \Vert u(t) \Vert_{\rho}^2 \, dt \right)^{\frac{1}{2}},
	\end{displaymath}
	and the inner product in $ L^2([0,1], l^2_{\rho}) $ for functions $ u(t) $ and $ v(t) $ is defined as:
	\begin{displaymath}
		\left\langle  u, v \right\rangle_{L^2_{\rho}} := \left\langle  u, v \right\rangle_{L^2([0,1], l^2_{\rho})} = \int_0^1 \left\langle  u(t), v(t)\right\rangle_{\rho} \, dt.
	\end{displaymath}
\end{definition}

Let $W = \left\{ W_t, ~t \in [0, 1] \right\}$ be a Brownian motion (Wiener process) defined in the complete filtered probability space $(\Omega, \mathcal{F}, \left\{ \mathcal{F}_t \right\}_{t \geq 0}, \mathbb{P})$. Here, $\Omega$ represents the space of continuous functions vanishing at zero, and $\mathbb{P}$ denotes the Wiener measure. Let $H := L^2([0,1], l^2_{\rho})$ be a Hilbert space and $H^1$ be the Cameron-Martin space defined as follows:
\begin{displaymath}
	H^1 := \left\{ f : [0, 1] \to \mathbb{R}^n \in H^1 ~\big|~f(0) = 0, f ~ \text{is absolutely continuous functions and} ~ f^{\prime} \in H \right\}.
\end{displaymath}
The scalar product in $H^1$ is defined as follows:
\begin{displaymath}
	\left\langle  f, g \right\rangle_{H^1} = \left\langle  f^{\prime}, g^{\prime} \right\rangle_{H} = \left\langle f^{\prime}, g^{\prime} \right\rangle_{L^2_{\rho}}
\end{displaymath}
for all $f, g \in H^1$.
Let $\mathcal{P}:H^1 \to H^1$ be an orthogonal projection with $dim \mathcal{P}H^1 < \infty$ and the specific expression
\begin{displaymath}
	\mathcal{P}h = \sum_{i = 1}^{n} \left\langle  h_i, f \right\rangle_{H^1} h_i,
\end{displaymath}
where $(h_1, ..., h_n)$ is a set of orthonormal basis in $\mathcal{P}H^1$. In addition, we can also define the
$H^1$-valued random variable
\begin{displaymath}
	\mathcal{P}^W = \sum_{i = 1}^{n} \bigg( \int_{0}^{1} {h_i^{\prime}} \,{\rm d}W_s \bigg) h_i,
\end{displaymath}
where $\mathcal{P}^W$ does not depend on $(h_1, ..., h_n)$.
\begin{definition}\label{definition 2.1}
	We say that a sequence of orthogonal projections $\mathcal{P}_n$ on $H^1$ is an approximating sequence of projections, if $dim \mathcal{P}_n H^1 < \infty$ and $\mathcal{P}_n$ converges strongly to the identity operator $I$ in $H^1$ as $n \to \infty$.
\end{definition}

\begin{definition}\label{definition 2.2}
	We say that a semi-norm $\mathcal{N}$ on $H^1$ is measurable, if there exists a random variable $\tilde{\mathcal{N}}$, satisfying $\tilde{\mathcal{N}} < \infty $ a.s, such that for any approximating sequence of projections $\mathcal{P}_n$ on $H^1$, the sequence $\mathcal{N}(\mathcal{P}^W_n)$ converges to $\tilde{\mathcal{N}}$ in probability and $\mathbb{P}(\tilde{\mathcal{N}} \leq \epsilon) > 0$ for any $\epsilon > 0$. Moreover, if $\mathcal{N}$ is a norm on $H^1$, then we call it a measurable norm.
\end{definition}

\subsection{Onsager-Machlup functional}
In the problem of finding the most probable path of a diffusion process, we encounter the issue that the probability of any single path is zero. Therefore, we search for the probability that a path is located in a certain region, which may be a tube along a differentiable function. Consequently, the Onsager-Machlup function can be defined as the Lagrangian that gives the most probable tube. We now introduce definitions of Onsager-Machlup function and Onsager-Machlup functional.
\begin{definition}
	Consider a tube surrounding a reference path $\varphi_t$ with initial value $\varphi_0 = x$ and $\varphi_t - x$ belongs to $H^1$. Assuming $\epsilon$ is given and small enough, we estimate the probability that the solution process $X_t$ is located in that tube as:
	\begin{displaymath}
		\mathbb{P} \left\{ \Vert X - \varphi\Vert \leq \epsilon\right\}  \propto C(\epsilon) {\rm exp} \left\{ -\frac{1}{2} \int_{0}^{1} {OM(t, \varphi, \dot{\varphi})} \,{\rm d}t \right\},
	\end{displaymath}
	where $\propto$ denotes the equivalence relation for $\epsilon$ small enough  and $\Vert \cdot \Vert$ is a suitable norm. Then we call the integrand $OM(t, \varphi, \dot{\varphi})$ the Onsager-Machulup function and also call integral $\int_{0}^{1} {OM(t, \varphi, \dot{\varphi})} \,{\rm d}t$ the Onsager-Machulup functional. In analogy to classical mechanics, we also refer to the Onsager-Machulup function as the Lagrangian function and the Onsager-Machulup functional as the action functional.
\end{definition}

\subsection{Karhunen-Loève expansion}
We calculate here the Karhunen-Loève expansion for a class of one-dimensional centered mean-square continuous stochastic processes that will appear in the decomposition of $W^Q(t)$.
\begin{definition}
	Assume that stochastic prcess $X : [0,1] \times \Omega \to \mathbb{R}$ is measurable
	for every $t \in [0,1]$. We say stochastic process $X(t, \omega)$ is centered if
	\begin{displaymath}
		\mathbb{E}\left[ X(t, \omega)\right]  = 0 \quad \text{for all}~ t \in [0,1].
	\end{displaymath}
	We say a stochastic process $X(t, \omega)$ is mean-square continuous if
	\begin{displaymath}
		\lim\limits_{\epsilon \to 0} \mathbb{E}\left[ \left( X(t + \epsilon, \omega) - X(t, \omega)\right)^2 \right] = 0 \quad \text{for all}~ t \in [0,1].
	\end{displaymath}
\end{definition}
For a centered mean-square continuous stochastic proces $X(t, \omega)$, we define the integral operator $K : L^2([0,1]) \to L^2([0,1])$ by
\begin{displaymath}
	(Kv)(s) := \int_{0}^{1} k(s,t)v(t)dt, \quad s,t \in [0,1],
\end{displaymath}
where $v(s) \in L^2([0,1])$ and $k(s,t) = \mathbb{E}\left[ X(s, \omega) X(t, \omega)\right] $. So we can show that $K$ is a  compact, positive and self-adjoint operator. According to the spectral theorem, $K$ has a complete set of eigenvectors $\left\lbrace l_i \right\rbrace_{i \in \mathbb{Z}} $ in $L^2([0,1])$ and real non-negative eigenvalues $\left\lbrace \lambda_i \right\rbrace_{i \in \mathbb{Z}} $:
\begin{displaymath}
	Ke_i = \lambda_i l_i.
\end{displaymath}

Next, we introduce the Karhunen-Loève expansion theorem related to this paper. For more detailed information on it, please refer to \cite{2}.
\begin{theorem}\label{theorem 2.6}
	Let $X : \Omega \times [0,1] \to \mathbb{R}$ be a centered mean-square continuous stochastic process with $X \in L^2(\Omega \times [0,1])$. Then there exists an orthonormal basis $\left\lbrace l_i\right\rbrace_{i \in \mathbb{Z}} $ of $L^2([0,1])$ such that for all $t \in D$,
	\begin{displaymath}
		X(t, \omega) = \sum_{i \in \mathbb{Z}} \lambda_i x_i(\omega) l_i(t),
	\end{displaymath}
	where the coefficients $x_i$ is a sequence of independent, standard normal $\mathcal{N}(0, 1)$ random variables and has the following expression:
	\begin{displaymath}
		x_i(\omega) = \frac{1}{\lambda_i} \int_{0}^{1} X(t, \omega)l_i(t) \mathrm{d} t,
	\end{displaymath}
	and
	\begin{displaymath}
		\lambda_i^2 = \mathrm{Var} \left[ \int_{0}^{1} X(t, \omega)l_i(t) \mathrm{d} t \right].
	\end{displaymath}
\end{theorem}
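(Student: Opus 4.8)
The plan is to follow the classical route to the Karhunen--Loève expansion: diagonalize the covariance operator $K$, read off the coefficients as the projections of $X$ onto its eigenfunctions, and then use Mercer's theorem to upgrade the formal expansion to genuine mean-square convergence. First I would record the properties of the kernel $k(s,t) = \mathbb{E}[X(s,\omega)X(t,\omega)]$. Mean-square continuity of $X$ together with $X \in L^2(\Omega \times [0,1])$ and the Cauchy--Schwarz inequality give that $k$ is continuous on $[0,1]^2$, hence bounded, so $k \in L^2([0,1]^2)$ and the associated operator $K$ is Hilbert--Schmidt, in particular compact. Symmetry of $k$ makes $K$ self-adjoint, and for any $v \in L^2([0,1])$ one has $\langle Kv, v\rangle = \mathbb{E}\big[(\int_0^1 X(t,\omega)v(t)\,dt)^2\big] \ge 0$, so $K$ is positive. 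The spectral theorem then furnishes an orthonormal basis $\{l_i\}_{i \in \mathbb{Z}}$ of $L^2([0,1])$ consisting of eigenfunctions of $K$ with eigenvalues $\lambda_i^2 \ge 0$, i.e.\ $K l_i = \lambda_i^2 l_i$.

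Next I would introduce the coefficients. For each $i$ set $\xi_i(\omega) := \int_0^1 X(t,\omega) l_i(t)\,dt$; this is well defined in $L^2(\Omega)$ by Fubini and Cauchy--Schwarz. Centeredness of $X$ gives $\mathbb{E}[\xi_i] = 0$, and interchanging expectation and integration yields $\mathbb{E}[\xi_i \xi_j] = \int_0^1\!\int_0^1 k(s,t) l_i(s) l_j(t)\,ds\,dt = \langle K l_i, l_j\rangle = \lambda_i^2 \delta_{ij}$, so $\mathrm{Var}[\xi_i] = \lambda_i^2$, which is the claimed formula. For indices with $\lambda_i \neq 0$ put $x_i := \xi_i/\lambda_i$; these have mean zero, variance one, and are pairwise uncorrelated. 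Since $X$ is a Gaussian process, the family $(\xi_i)$ is jointly Gaussian --- each $\xi_i$ is an $L^2$-limit of Gaussian Riemann sums --- and uncorrelated jointly Gaussian variables are independent, so the $x_i$ are i.i.d.\ $\mathcal{N}(0,1)$. (Indices with $\lambda_i = 0$ give $\xi_i = 0$ a.s.\ and play no role.)

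It remains to prove $X(t,\omega) = \sum_{i} \lambda_i x_i(\omega) l_i(t)$ in $L^2(\Omega)$ for each $t$. Writing $S_N(t) := \sum_{i=1}^{N} \lambda_i x_i l_i(t)$ and expanding the square,
\[
\mathbb{E}\big[(X(t) - S_N(t))^2\big] = k(t,t) - 2\sum_{i=1}^N \lambda_i l_i(t)\,\mathbb{E}[X(t)x_i] + \sum_{i,j=1}^N \lambda_i\lambda_j l_i(t)l_j(t)\,\mathbb{E}[x_i x_j].
\]
Here $\mathbb{E}[X(t)x_i] = \lambda_i^{-1}\int_0^1 k(t,s) l_i(s)\,ds = \lambda_i^{-1}(Kl_i)(t) = \lambda_i l_i(t)$ and $\mathbb{E}[x_i x_j] = \delta_{ij}$, so the right-hand side collapses to $k(t,t) - \sum_{i=1}^N \lambda_i^2 l_i(t)^2$. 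By Mercer's theorem the eigenexpansion $k(s,t) = \sum_i \lambda_i^2 l_i(s)l_i(t)$ converges absolutely and uniformly on $[0,1]^2$; in particular $\sum_i \lambda_i^2 l_i(t)^2 = k(t,t)$ and the tail tends to $0$. Hence $\mathbb{E}[(X(t)-S_N(t))^2] \to 0$, which is the assertion, and $x_i(\omega) = \lambda_i^{-1}\int_0^1 X(t,\omega) l_i(t)\,dt$ is exactly the definition above.

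I expect the main obstacle to be the appeal to Mercer's theorem: one must verify that the continuous, symmetric, positive-semidefinite kernel $k$ admits a uniformly convergent eigenexpansion, so that the diagonal identity $k(t,t) = \sum_i \lambda_i^2 l_i(t)^2$ holds --- this is what powers the convergence of $S_N(t)$ and is not a formal manipulation. A secondary point requiring care is the Gaussianity used to pass from ``uncorrelated'' to ``independent''; this needs $X$ itself to be Gaussian (as will be the case for the processes arising from $W^Q(t)$), together with a justification that the joint law of $(\xi_1,\dots,\xi_N)$ is genuinely Gaussian as an $L^2$-limit of Gaussian vectors.
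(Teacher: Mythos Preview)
Your proposal is correct and follows the classical proof of the Karhunen--Lo\`eve theorem via the spectral decomposition of the covariance operator and Mercer's theorem. The paper does not supply its own proof of this result; it is quoted from reference~\cite{2}, so there is nothing to compare against beyond confirming that your argument is the standard one. Your observation that the statement, as written, tacitly requires $X$ to be Gaussian in order for the $x_i$ to be independent $\mathcal{N}(0,1)$ (rather than merely uncorrelated with unit variance) is accurate and worth flagging; in the paper's applications the relevant processes are built from $W^Q(t)$ and are indeed Gaussian, so the conclusion holds there.
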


\subsection{Technical lemmas}
In this section, we will introduce several commonly utilized technical lemmas and theorems. Throughout this paper, if not mentioned otherwise, $\mathbb{E} \left(A \big| B\right)$ represents the conditional expectation of $A$ under $B$, and $C$ is a constant and will change with the line.

When we derive the Onsage-Machup functional of SDEs with additive noise, the following lemma is the most basic one, as it ensures that we handle each term separately. Its proof can be found in \cite{3}.
\begin{lemma}\label{lemma 2.7}
	For a fixed integer $N \geq 1$, let $X_1, ..., X_N \in \mathbb{R}$ be $N$ random variables defined on $(\Omega, \mathcal{F}, \left\{ \mathcal{F}_t \right\}_{t \geq 0}, \mathbb{P})$ and $\left\{D_{\epsilon}; \epsilon > 0 \right\}$ be a family of sets in $\mathcal{F}$. Suppose that for any $c \in \mathbb{R}$ and any $i = 1, ..., N$, we have
	\begin{displaymath}
		\limsup\limits_{\epsilon \to 0} \mathbb{E}\left({\rm exp}\left\{ c X_i \right\}\big|D_{\epsilon} \right) \leq 1.
	\end{displaymath}
	Then
	\begin{displaymath}
		\limsup\limits_{\epsilon \to 0} \mathbb{E}\left({\rm exp}\left\{ \sum_{i = 1}^{N}c X_i \right\} \big|D_{\epsilon} \right)= 1.
	\end{displaymath}
\end{lemma}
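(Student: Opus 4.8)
The plan is to establish the two-sided bound
\[
\limsup_{\epsilon\to 0}\mathbb{E}\Big(\exp\Big\{\textstyle\sum_{i=1}^{N} c X_i\Big\}\,\Big|\,D_\epsilon\Big)\le 1
\qquad\text{and}\qquad
\liminf_{\epsilon\to 0}\mathbb{E}\Big(\exp\Big\{\textstyle\sum_{i=1}^{N} c X_i\Big\}\,\Big|\,D_\epsilon\Big)\ge 1 ,
\]
which together force the limit to exist and equal $1$, hence in particular the claimed $\limsup=1$. Throughout I treat $\mathbb{E}(\,\cdot\,|\,D_\epsilon)$ as the expectation under the conditional probability measure $\mathbb{P}(\,\cdot\,|\,D_\epsilon)=\mathbb{P}(\,\cdot\cap D_\epsilon)/\mathbb{P}(D_\epsilon)$ (tacitly assuming $\mathbb{P}(D_\epsilon)>0$ for small $\epsilon$, which holds in the intended small-ball application), so that Hölder's and Jensen's inequalities apply verbatim.

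For the upper bound I would write $\exp\{\sum_{i=1}^{N}cX_i\}=\prod_{i=1}^{N}\exp\{cX_i\}$ and apply the generalized Hölder inequality under $\mathbb{P}(\,\cdot\,|\,D_\epsilon)$ with all $N$ exponents equal to $N$:
\[
\mathbb{E}\Big(\exp\Big\{\textstyle\sum_{i=1}^{N}cX_i\Big\}\,\Big|\,D_\epsilon\Big)
\le \prod_{i=1}^{N}\Big(\mathbb{E}\big(\exp\{N c X_i\}\,\big|\,D_\epsilon\big)\Big)^{1/N}.
\]
Each factor is nonnegative, and applying the hypothesis with the real constant $Nc$ in place of $c$ gives $\limsup_{\epsilon\to0}\mathbb{E}(\exp\{NcX_i\}\,|\,D_\epsilon)\le 1$ for every $i$. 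Since $\limsup_{\epsilon\to0}$ of a finite product of nonnegative quantities with finite $\limsup$'s is at most the product of the individual $\limsup$'s (pass to a convergent subsequence realizing the $\limsup$ of the product, then to a further subsequence along which every factor converges), the right-hand side has $\limsup$ at most $\prod_{i=1}^{N}1=1$.

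For the lower bound I would first show $m_i(\epsilon):=\mathbb{E}(X_i\,|\,D_\epsilon)\to 0$ as $\epsilon\to0$. By Jensen's inequality for the convex map $x\mapsto e^{x}$ one has $\mathbb{E}(\exp\{cX_i\}\,|\,D_\epsilon)\ge \exp\{c\,m_i(\epsilon)\}$, so the hypothesis yields $\limsup_{\epsilon\to0}c\,m_i(\epsilon)\le 0$ for every $c\in\mathbb{R}$; taking $c=1$ and $c=-1$ gives $\limsup_{\epsilon\to0}m_i(\epsilon)\le 0$ and $\liminf_{\epsilon\to0}m_i(\epsilon)\ge 0$, hence $m_i(\epsilon)\to0$. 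A second application of Jensen gives $\mathbb{E}(\exp\{\sum_{i=1}^{N}cX_i\}\,|\,D_\epsilon)\ge \exp\{\sum_{i=1}^{N}c\,m_i(\epsilon)\}$, and since the exponent tends to $0$ we obtain $\liminf_{\epsilon\to0}\mathbb{E}(\exp\{\sum_{i=1}^{N}cX_i\}\,|\,D_\epsilon)\ge 1$. Combining the two bounds completes the proof.

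I do not expect any deep obstacle here: the argument is bookkeeping with Hölder and Jensen. The only point that genuinely needs attention is that the statement asserts \emph{equality} to $1$, not merely $\le 1$, so one must not stop after the Hölder step but also extract the first-moment information $m_i(\epsilon)\to0$ and run the Jensen lower bound; a secondary, purely technical caveat is ensuring $\mathbb{P}(D_\epsilon)>0$ so that the conditioning is meaningful, which is immediate for the Brownian small-ball events used later in the paper.
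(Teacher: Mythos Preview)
Your argument is correct: the H\"older step gives the upper bound $\limsup\le 1$, and extracting $m_i(\epsilon)\to 0$ from the hypothesis via Jensen then feeding it back through Jensen for the sum gives the lower bound $\liminf\ge 1$. The paper does not actually prove this lemma itself; it only states it and refers to reference~[3] for a proof, so there is no in-paper argument to compare against. Your approach is the standard one for this type of result and is exactly what one finds in the cited literature.
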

The following two lemmas are about the limit behavior of the expected value of independent, standard normal $\mathcal{N}(0, 1)$ random variables exponential functions, which can be referred to in \cite{4}.
\begin{lemma}\label{lmma 2.8}
	Let $(X_i)_{i \in \mathbb{Z}}$ be a sequence of independent, standard normal $\mathcal{N}(0, 1)$ random variables defined on $(\Omega, \mathcal{F}, \left\{ \mathcal{F}_t \right\}_{t \geq 0}, \mathbb{P})$ and let $(\eta_i)_{i \in \mathbb{Z}}$ and $(\theta_i)_{i \in \mathbb{Z}}$ be two real numbers sequences in $ l^2 $. Then
	\begin{displaymath}
		\lim_{\epsilon \to 0} E\left[\exp\left(\sum_{i \in \mathbb{Z}} \eta_i X_i \right) \big| \sum_{i \in \mathbb{Z}} { \theta_i^2 X_i^2} \leq \epsilon\right] = 1.
	\end{displaymath}
	Moreover, for any uniformly bounded random variable $ Y(\omega) $,
	\begin{displaymath}
		\lim_{\epsilon \to 0} E\left[\exp\left(\sum_{i \in \mathbb{Z}} Y(\omega) \eta_i X_i \right) \big| \sum_{i \in \mathbb{Z}} { \theta_i^2 X_i^2} \leq \epsilon\right] = 1.
	\end{displaymath}
\end{lemma}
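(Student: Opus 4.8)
The plan is to evaluate the conditional expectation \emph{exactly} by a Gaussian change of variables and then pass to the limit using small-ball asymptotics. Set $S:=\sum_{i\in\mathbb{Z}}\eta_iX_i$, which converges in $L^2(\Omega)$ because $\eta\in l^2$, and $D_\epsilon:=\{\sum_{i\in\mathbb{Z}}\theta_i^2X_i^2\le\epsilon\}$. Since $\mathbb{E}[\sum_i\theta_i^2X_i^2]=\sum_i\theta_i^2<\infty$, a Markov estimate on the tail combined with independence gives $\mathbb{P}(D_\epsilon)>0$ for every $\epsilon>0$, so the conditional expectations are well defined. I will assume throughout that $\theta_i\neq0$ for every $i$: if $\theta_i=0$ for some $i$ then $X_i$ is invisible to $D_\epsilon$ and the asserted limit already forces $\eta_i=0$ (otherwise $\mathbb{E}[\exp(S)\mid D_\epsilon]\ge\exp(\tfrac12\eta_i^2)>1$ by independence and the symmetry of $D_\epsilon$ under sign flips), so such coordinates may be discarded.

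First I would apply the Cameron--Martin theorem to the translation $X_i\mapsto X_i+\eta_i$, admissible for the countable product of standard Gaussians precisely because $\eta\in l^2$: for every bounded measurable $\Phi$ on $l^2$,
\[
\mathbb{E}\Big[\exp\Big(\sum_i\eta_iX_i\Big)\,\Phi\big((X_i)_i\big)\Big]=\exp\big(\tfrac12\|\eta\|_{l^2}^2\big)\,\mathbb{E}\big[\Phi\big((X_i+\eta_i)_i\big)\big].
\]
Taking $\Phi=\mathbf{1}_{D_\epsilon}$ and dividing by $\mathbb{P}(D_\epsilon)$ yields the exact identity
\[
\mathbb{E}\big[\exp(S)\mid D_\epsilon\big]=\exp\big(\tfrac12\|\eta\|_{l^2}^2\big)\,\frac{\mathbb{P}\big(\sum_i\theta_i^2(X_i+\eta_i)^2\le\epsilon\big)}{\mathbb{P}\big(\sum_i\theta_i^2X_i^2\le\epsilon\big)}.
\]
Writing $\mu$ for the law on $l^2$ of the centered Gaussian vector $(\theta_iX_i)_{i\in\mathbb{Z}}$ (a Radon measure because $\sum_i\theta_i^2<\infty$) and $h:=(\theta_i\eta_i)_{i\in\mathbb{Z}}$, the ratio on the right equals $\mu(B(h,\sqrt\epsilon))/\mu(B(0,\sqrt\epsilon))$, with $B(x,r)$ the closed $l^2$-ball of radius $r$ about $x$ (here I used the symmetry of $\mu$).

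The step I expect to be the main obstacle is the small-ball ratio $\mu(B(h,\sqrt\epsilon))/\mu(B(0,\sqrt\epsilon))$ as $\epsilon\downarrow0$. The Cameron--Martin space of $\mu$ is $H_\mu=\{(v_i):\sum_iv_i^2/\theta_i^2<\infty\}$, and $h\in H_\mu$ with $\|h\|_{H_\mu}^2=\sum_i\theta_i^2\eta_i^2/\theta_i^2=\|\eta\|_{l^2}^2<\infty$. I would invoke the classical fact that, for a centered Gaussian measure on a separable Hilbert space and any $h$ in its Cameron--Martin space, $\lim_{r\downarrow0}\mu(B(h,r))/\mu(B(0,r))=\exp(-\tfrac12\|h\|_{H_\mu}^2)$: Anderson's inequality already gives the one-sided bound $\mu(B(h,r))\le\mu(B(0,r))$, and the precise constant follows by rewriting, via Cameron--Martin once more, $\mu(B(h,r))/\mu(B(0,r))=\exp(-\tfrac12\|h\|_{H_\mu}^2)\,\mathbb{E}_\mu[\exp(\ell_h)\mid B(0,r)]$ where $\ell_h$ is the Paley--Wiener functional of $h$ (a centered Gaussian of variance $\|h\|_{H_\mu}^2$), and then showing this conditional expectation tends to $1$ — transparent when $h$ lies in the range of the covariance operator, since then $\ell_h$ is an honest continuous linear functional on $l^2$ and hence is $O(r)$ uniformly on $B(0,r)$, and extended to general $h\in H_\mu$ by approximating $h$ in $H_\mu$-norm and controlling the error with H\"older's and Anderson's inequalities. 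Granting this with $r=\sqrt\epsilon$, the ratio tends to $\exp(-\tfrac12\|\eta\|_{l^2}^2)$, and the displayed identity gives $\mathbb{E}[\exp(S)\mid D_\epsilon]\to1$. (If in addition $(\eta_i/\theta_i)_i\in l^2$ the argument collapses: on $D_\epsilon$ one has $|S|=|\sum_i(\eta_i/\theta_i)(\theta_iX_i)|\le\|(\eta_i/\theta_i)_i\|_{l^2}\sqrt\epsilon$, so $\exp(S)\to1$ uniformly on $D_\epsilon$ and the conditioning plays no role.)

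For the ``Moreover'' statement, fix $M$ with $|Y(\omega)|\le M$ almost surely. Applying the first assertion with $\eta$ replaced by $\lambda\eta$ (still in $l^2$) gives $\mathbb{E}[\exp(\lambda S)\mid D_\epsilon]\to1$ for every $\lambda\in\mathbb{R}$. Using $\cosh x-1\ge x^2/2$ with $\lambda=\pm t$ gives $t^2\,\mathbb{E}[S^2\mid D_\epsilon]\le2\,\mathbb{E}[\cosh(tS)-1\mid D_\epsilon]\to0$, so $S\to0$, hence $Y(\omega)S\to0$, in $\mathbb{P}(\cdot\mid D_\epsilon)$-probability, and therefore $\exp(Y(\omega)S)\to1$ in the same sense. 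Moreover, for a fixed small $\delta>0$,
\[
\mathbb{E}\big[\exp\big((1+\delta)Y(\omega)S\big)\mid D_\epsilon\big]\le\mathbb{E}\big[\exp\big((1+\delta)MS\big)\mid D_\epsilon\big]+\mathbb{E}\big[\exp\big(-(1+\delta)MS\big)\mid D_\epsilon\big],
\]
which stays bounded as $\epsilon\downarrow0$ by the first assertion; hence $\{\exp(Y(\omega)S)\}_{\epsilon>0}$ is bounded in $L^{1+\delta}(\mathbb{P}(\cdot\mid D_\epsilon))$ and so uniformly integrable. Convergence in probability together with uniform integrability then gives $\mathbb{E}[\exp(Y(\omega)S)\mid D_\epsilon]\to1$, which is the claim. $\square$
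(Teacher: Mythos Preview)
The paper does not prove this lemma; it merely cites reference~[4] (Shepp--Zeitouni). So there is no ``paper's own proof'' to compare against, and I can only assess your argument on its merits.

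Your proof is correct and follows the standard Gaussian-measure route. The Cameron--Martin identity
\[
\mathbb{E}\big[\exp(S)\mid D_\epsilon\big]=\exp\big(\tfrac12\|\eta\|_{l^2}^2\big)\,\frac{\mu(B(h,\sqrt\epsilon))}{\mu(B(0,\sqrt\epsilon))}
\]
is exact, and the small-ball ratio limit $\exp(-\tfrac12\|h\|_{H_\mu}^2)$ is the classical result you sketch. One remark: the step where you rewrite the ratio as $\exp(-\tfrac12\|h\|_{H_\mu}^2)\,\mathbb{E}_\mu[\exp(\ell_h)\mid B(0,r)]$ is literally the original statement again (since $\ell_h=S$ here), so the content lies entirely in the approximation you give afterwards --- first for $h$ in the range of the covariance (where $\ell_h$ is a bounded linear functional, hence $O(r)$ on $B(0,r)$), then passing to general $h\in H_\mu$ by density with H\"older and Anderson. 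That two-step scheme is exactly the one used in the literature, and your control of the error term via $\mathbb{E}_\mu[\exp(\ell_k)\mid B(0,r)]\le\exp(\tfrac12\|k\|_{H_\mu}^2)$ (Anderson) together with the symmetry lower bound $\ge 1$ (Jensen) is clean. You are also right to flag the hidden hypothesis $\theta_i\neq 0$: as stated, the lemma is false if some $\theta_i=0$ while $\eta_i\neq 0$, and the paper silently assumes this away.

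For the ``Moreover'' clause your argument is fine, though the phrase ``uniformly integrable'' is slightly loose since the conditioning measures vary with $\epsilon$. What you actually use --- and what suffices --- is the uniform $L^{1+\delta}$ bound $\sup_{\epsilon}\mathbb{E}[\exp((1+\delta)M|S|)\mid D_\epsilon]<\infty$, obtained from the first part with $\eta\mapsto\pm(1+\delta)M\eta$; combined with $\mathbb{P}(|S|>\delta\mid D_\epsilon)\to 0$ this gives $\mathbb{E}[\exp(YS)\mid D_\epsilon]\to 1$ by the usual splitting on $\{|S|\le\delta\}$ and $\{|S|>\delta\}$.
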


\begin{lemma}\label{lmma 2.9}
	Let $(\eta_i)_{i \in \mathbb{Z}}$ and $(\theta_i)_{i \in \mathbb{Z}}$ be two real numbers sequences in $ l^2 $. And let $(X_i)_{i \in \mathbb{Z}}$ be a sequence of independent, standard normal $\mathcal{N}(0, 1)$ random variables defined on $(\Omega, \mathcal{F}, \left\{ \mathcal{F}_t \right\}_{t \geq 0}, \mathbb{P})$. Assume $ T : l^2 \to l^2$ to be a trace class operator, i.e. $\sum\limits_{i \in \mathbb{Z}} \langle T e_i, e_i \rangle < \infty$ for any orthonormal basis$ \left\lbrace e_i\right\rbrace_{i \in \mathbb{Z}} $ in space $l^2$. Then
		\begin{displaymath}
			\lim_{\varepsilon \to 0} E\left[ \exp\left( \sum_{i,j} X_i X_j T_{ij} \right) \bigg| \sum_{i \in \mathbb{Z}} { \theta_i^2 X_i^2} \leq \epsilon \right] = 1,
		\end{displaymath}
		and more generally
		\begin{displaymath}
			\lim_{\varepsilon \to 0} E\left[ \exp\left( \sum_{i,j} (X_i + \eta_i) X_j T_{ij} \right) \bigg|\sum_{i \in \mathbb{Z}} { \theta_i^2 X_i^2} \leq \epsilon \right] = 1.
		\end{displaymath}
\end{lemma}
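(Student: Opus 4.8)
My plan is to split the statement into three pieces of very different difficulty: an easy lower bound, an easy reduction of the shifted version (with $\eta_i$) to the unshifted one, and a genuinely harder upper bound for the unshifted version, which carries essentially all of the work. Throughout write $A_\epsilon:=\{\sum_{i\in\mathbb Z}\theta_i^2X_i^2\le\epsilon\}$ and $Z:=\sum_{i,j}X_iX_jT_{ij}$. Since the bilinear form only feels the symmetric part of $T$, I replace $T$ by $\tfrac12(T+T^{\top})$, which is still trace class; I also use the non-degeneracy $\theta_i\neq0$ for every $i$ (this is what occurs for the Karhunen--Loève coefficients in the application, and it is needed for the statement as phrased), so that on $A_\epsilon$ one has the coordinatewise bound $X_i^2\le\epsilon/\theta_i^2$. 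A Gaussian singular value decomposition $T=\sum_k\mu_k\langle\cdot,a_k\rangle b_k$ with $\sum_k|\mu_k|=\|T\|_{\mathrm{tr}}<\infty$ gives $Z=\sum_k\mu_kU_kV_k$ with $U_k=\langle X,a_k\rangle$, $V_k=\langle X,b_k\rangle$ standard Gaussian, so $Z$ converges absolutely almost surely and in $L^1$ and all the conditional expectations below are well defined.

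\emph{The easy parts.} For the lower bound, Jensen gives $E[e^{Z}\mid A_\epsilon]\ge\exp(E[Z\mid A_\epsilon])$; the event $A_\epsilon$ is invariant under each reflection $X_i\mapsto-X_i$, so the off-diagonal conditional means vanish and $E[Z\mid A_\epsilon]=\sum_iT_{ii}\,E[X_i^2\mid A_\epsilon]$. A correlation-inequality argument yields $E[X_i^2\mid A_\epsilon]\le\min\{1,\epsilon/\theta_i^2\}$, and splitting the sum at a level $N$, letting $\epsilon\to0$ and then $N\to\infty$ while using $\sum_i|T_{ii}|\le\|T\|_{\mathrm{tr}}$, shows $E[Z\mid A_\epsilon]\to0$, hence $\liminf_{\epsilon\to0}E[e^{Z}\mid A_\epsilon]\ge1$. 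For the shifted form, write $\sum_{i,j}(X_i+\eta_i)X_jT_{ij}=Z+\sum_j\widehat\eta_jX_j$ with $\widehat\eta:=T^{\top}\eta\in l^2$ (since $T^{\top}$ is bounded). Cauchy--Schwarz splits the conditional exponential moment into $E[e^{2Z}\mid A_\epsilon]^{1/2}$, which tends to $1$ by the unshifted statement applied to $2T$, and $E[\exp(2\sum_j\widehat\eta_jX_j)\mid A_\epsilon]^{1/2}$, which tends to $1$ by Lemma \ref{lmma 2.8}; the matching lower bound is once more Jensen plus the reflection symmetry, which kills the conditional mean of the linear term. Thus everything reduces to proving $\limsup_{\epsilon\to0}E[e^{Z}\mid A_\epsilon]\le1$.

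\emph{Upper bound for the unshifted form.} Fix $N$ and write $Z=Z_N+R_N$, where $Z_N$ retains the indices with $|i|,|j|\le N$. On $A_\epsilon$ one has $|Z_N|\le\epsilon\sum_{|i|,|j|\le N}|T_{ij}|/(|\theta_i||\theta_j|)=:\epsilon\,c_N$, so $E[e^{2Z_N}\mid A_\epsilon]\to1$ as $\epsilon\to0$ for each fixed $N$; by Cauchy--Schwarz it then suffices to show $\limsup_{\epsilon\to0}E[e^{2R_N}\mid A_\epsilon]\le h(N)$ with $h(N)\to1$ as $N\to\infty$. Decompose $R_N=2B_N+D_N$ into the ``corner'' bilinear part $B_N=\sum_{|i|>N,\,|j|\le N}X_iX_jT_{ij}$ and the ``far block'' quadratic part $D_N=\sum_{|i|,|j|>N}X_iX_jT_{ij}$, the latter being the Gaussian chaos of the compression $P_{>N}TP_{>N}$, whose trace norm $\delta_N$ tends to $0$ by dominated convergence in the singular value decomposition. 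Using the independence of the blocks $X_{\le N}=(X_i)_{|i|\le N}$ and $X_{>N}=(X_i)_{|i|>N}$, the inclusion $A_\epsilon\subseteq\{\sum_{|i|\le N}\theta_i^2X_i^2\le\epsilon\}\cap\{\sum_{|i|>N}\theta_i^2X_i^2\le\epsilon\}$, and the lower bound $P(A_\epsilon)\ge P(\sum_{|i|\le N}\theta_i^2X_i^2\le\epsilon/2)\,P(\sum_{|i|>N}\theta_i^2X_i^2\le\epsilon/2)$, the conditioning disentangles along the two blocks: on the finite block the ratio of ball probabilities at levels $\epsilon$ and $\epsilon/2$ converges to the finite constant $2^{(2N+1)/2}$, while on the far block I estimate $E[e^{cD_N}\mid\{\sum_{|i|>N}\theta_i^2X_i^2\le\epsilon\}]$ via the Gaussian product formula $\prod_m(1-2c\nu_m)^{-1/2}$ for the eigenvalues $\nu_m$ of $P_{>N}TP_{>N}$ — close to $1$ once $\delta_N$ is small — together with the concentration of the conditional law of $X_{>N}$ at $0$; the corner term is treated the same way after noting $|B_N|\le\sqrt\epsilon\,\Xi_N$ on $A_\epsilon$ for a fixed Gaussian functional $\Xi_N$ of $X_{>N}$ whose conditional law also concentrates at $0$. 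Letting $\epsilon\to0$ and then $N\to\infty$ closes the estimate.

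\emph{Main obstacle.} The genuinely delicate step is the last one: getting control of $E[e^{2R_N}\mid A_\epsilon]$ that is uniform in $\epsilon$ as $\epsilon\downarrow0$. The conditioning event has probability tending to $0$, so a conditional moment cannot simply be bounded by an unconditional one; one must combine the product structure over the two coordinate blocks, the smallness of the tail trace norm $\delta_N$, the precise small-ball asymptotics $P(\sum_{|i|\le N}\theta_i^2X_i^2\le\epsilon)\asymp\epsilon^{(2N+1)/2}$ for the finite block, and enough uniform exponential integrability of the infinite-dimensional quadratic Gaussian chaos $R_N$ to pass to the limit. All the remaining ingredients — the reductions, Jensen's inequality, Cauchy--Schwarz, and the appeal to Lemma \ref{lmma 2.8} — are routine by comparison.
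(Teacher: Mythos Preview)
The paper does not supply a proof of this lemma; it is quoted as a technical tool with a pointer to reference~\cite{4}. So there is no in-paper argument to compare against, and what follows is an assessment of your proposal itself.

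Your lower bound via Jensen together with the reflection symmetry of $A_\epsilon$ is correct, as is the reduction of the $\eta$-shifted statement to the unshifted one by Cauchy--Schwarz and Lemma~\ref{lmma 2.8}. The genuine gap is in the upper bound, precisely at the step you flag as the main obstacle. Your ``disentangling'' uses the inclusion $A_\epsilon\subseteq A_\epsilon^{\le N}\cap A_\epsilon^{>N}$ in the numerator and the product lower bound $P(A_\epsilon)\ge P(A_{\epsilon/2}^{\le N})P(A_{\epsilon/2}^{>N})$ in the denominator. After factoring out the finite block you are left with the ratio $P(A_\epsilon^{\le N})/P(A_{\epsilon/2}^{\le N})\to 2^{(2N+1)/2}$ as $\epsilon\to0$, and an analogous infinite-block ratio $P(A_\epsilon^{>N})/P(A_{\epsilon/2}^{>N})$. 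The first factor \emph{diverges} as $N\to\infty$, and for Gaussian small balls in a weighted $l^2$ norm the second is typically unbounded as $\epsilon\downarrow0$ as well. Neither is compensated by anything in your far-block estimate, so you cannot extract a bound $h(N)$ with $h(N)\to1$; the scheme as written does not close.

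Separately, the sentence ``estimate $E[e^{cD_N}\mid A_\epsilon^{>N}]$ via the Gaussian product formula $\prod_m(1-2c\nu_m)^{-1/2}$ \ldots\ together with the concentration of the conditional law of $X_{>N}$ at $0$'' conflates two different objects. The product formula computes the \emph{unconditional} Laplace transform of the quadratic form; conditioning on a small ball does not, in general, reduce exponential moments of an indefinite quadratic form. And since $(\theta_i)\in l^2$ forces $\theta_i\to0$, the tail coordinates $X_{>N}$ are only weakly constrained by $A_\epsilon^{>N}$, so ``concentration at $0$'' is exactly the point that needs proof, not an assumption you may invoke. The added hypothesis $\theta_i\neq0$ is indeed what holds in the application, but even with it a deterministic bound on $D_N$ over $A_\epsilon$ fails (take $T$ diagonal with $T_{ii}=i^{-2}$ and $\theta_i=i^{-1}$). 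You will need a genuinely probabilistic control of the conditional exponential moment of the tail quadratic form, which is what the cited reference supplies.
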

The following theorem is about the probability estimation of Brownian motion balls, and its proof can be found in \cite{5}. Before giving this theorem, we shall assume that $\lambda_n$ is given by $\lambda_i = \lambda(i)$, where $\lambda(t) : [1, \infty) \to \mathbb{R}_+$ satisfies:
\begin{itemize}
	\item[(1)] $\lambda(t))$ is decreasing, $\lambda(t) > 0$ for all $t \geq 1$.
	\item[(2)] $\lambda(t) \leq t^{-\frac{1}{2}}$ for all $t \geq 1$.
	\item[(3)] $\int_1^\infty \lambda(t)^2 \, dt < \infty$.
	\item[(4)] $\log \lambda(t)$ is convex.
	\item[(5)] $\varphi(t) = t^{-\frac{1}{2}} \lambda(t)^{-1}$ increases to $+\infty$ on $[1, \infty)$.
\end{itemize}
\begin{lemma}\label{lemma 2.13}
	When $\lambda(n)$ satisfies $(1)$--$(5)$, let $\varphi$, $\psi$ and $H$ be given as above:
	\begin{displaymath}
		\begin{aligned}
			\varphi(x) &= x^{-\frac{1}{2}} \lambda(x)^{-1} \quad &\text{for all}~ x \geq 1,\\
			\psi(x) &= \int_{x}^{\infty} \lambda(t) \mathrm{d} t \quad &\text{for all}~ x \geq 1,\\
			H(x) &= \int_1^x \frac{t\varphi'(t)}{\varphi(t)} \, 	dt \quad &\text{for all}~ x \geq 1.
		\end{aligned}	
	\end{displaymath}
	Then for some $B > 0$, we have:
	\begin{displaymath}
		\mathbb{P}\left( \left(\sum_{i=1}^\infty \lambda(i)^2 X_i^2 \right)^{\frac{1}{2}} \leq t \right)  \geq B  x^{-\frac{1}{2}} \lambda(x) \left\{1 - \frac{\psi(x)}{t^2 - s^2}\right\} \exp\left\{-H(x + 1) - \frac{s^2}{2 \lambda(x)^2}\right\},
	\end{displaymath}
	whenever $x \geq 1$ and $\frac{1}{\varphi(x)} \leq s \leq t \leq 1$.
\end{lemma}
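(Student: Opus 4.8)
\emph{Proof plan.} This is a small-deviation lower bound, and the plan is to decouple the series at $n:=\lceil x\rceil$: write $Y:=\sum_{i\ge1}\lambda(i)^2X_i^2=Y_n'+Y_n''$ with $Y_n':=\sum_{i\le n}\lambda(i)^2X_i^2$ and $Y_n'':=\sum_{i>n}\lambda(i)^2X_i^2$, so that by independence
\begin{displaymath}
\mathbb{P}(Y\le t^2)\ \ge\ \mathbb{P}(Y_n'\le s^2)\,\mathbb{P}(Y_n''\le t^2-s^2).
\end{displaymath}
The tail factor is handled by a first-moment estimate: by (1)--(2), $\lambda(i)^2\le\lambda(i)$ and, $\lambda$ being decreasing, $\mathbb{E}[Y_n'']=\sum_{i>n}\lambda(i)^2\le\int_x^\infty\lambda(u)\,{\rm d}u=\psi(x)$, whence Markov's inequality gives $\mathbb{P}(Y_n''\le t^2-s^2)\ge1-\psi(x)/(t^2-s^2)$, which is exactly the second bracket of the claim (and when $\psi(x)\ge t^2-s^2$ the asserted lower bound is nonpositive, so there is nothing to prove).

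The substance is the estimate $\mathbb{P}(Y_n'\le s^2)\ge B\,x^{-1/2}\lambda(x)\,{\rm exp}\{-H(x+1)-s^2/(2\lambda(x)^2)\}$, which I would prove by exponential tilting. Using $\mathbb{E}[{\rm exp}(-zY_n')]=\prod_{i\le n}(1+2z\lambda(i)^2)^{-1/2}$, fix $z=z_n>0$ by the truncated saddle-point equation $\sum_{i\le n}\lambda(i)^2(1+2z_n\lambda(i)^2)^{-1}=s^2$; under the corresponding tilted law the $X_i$ are independent centred Gaussians of variances $(1+2z_n\lambda(i)^2)^{-1}$ and $\mathbb{E}^{\mathrm{tilt}}[Y_n']=s^2$, so a second-moment (Chebyshev) argument gives $\mathbb{P}^{\mathrm{tilt}}(Y_n'\in[s^2-\sigma_n,s^2])\ge c_0>0$ with $\sigma_n$ of the order of the tilted standard deviation. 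Writing the Radon--Nikodym density $\tfrac{{\rm d}\mathbb{P}}{{\rm d}\mathbb{P}^{\mathrm{tilt}}}=\prod_{i\le n}(1+2z_n\lambda(i)^2)^{-1/2}{\rm exp}(z_nY_n')$ and restricting to that event gives
\begin{displaymath}
\mathbb{P}(Y_n'\le s^2)\ \ge\ c_0\,{\rm exp}\{-R_n-z_n\sigma_n\},\qquad R_n:=\tfrac12\textstyle\sum_{i\le n}\log\big(1+2z_n\lambda(i)^2\big)-z_ns^2,
\end{displaymath}
and one checks that $z_n\sigma_n=O(\sqrt x)$, which is negligible below.

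It then remains to show $R_n\le H(x+1)+s^2/(2\lambda(x)^2)-c\,x+O(\log x)$ for some $c>0$, so that the $O(\sqrt x)$ and $O(\log x)$ errors together with the prefactor $x^{-1/2}\lambda(x)$ are all absorbed with exponential room to spare; this is the step I expect to be the main obstacle, and it is where conditions (1)--(5) are genuinely used. The hypothesis $s\ge1/\varphi(x)$ forces $z_n\le n/(2s^2)\le1/(2\lambda(x)^2)$; splitting the sum defining $R_n$ at the crossover where $2z_n\lambda(i)^2\asymp1$ and bounding $\log(1+y)\le\log2+\log y$ below it, $\log(1+y)\le y$ above it, reduces $R_n$ to $\tfrac n2\log(n/s^2)+\sum_{i\le n}\log\lambda(i)$ up to controlled remainders; then substituting $\lambda(i)=i^{-1/2}\varphi(i)^{-1}$, evaluating $\sum_{i\le n}\log i$ by Stirling and $\sum_{i\le n}\log\varphi(i)$ by monotone comparison with $\int_1^{n}\log\varphi$ (conditions (4)--(5) keep that comparison error logarithmic), and invoking the identity $H(x)=x\log\varphi(x)-\int_1^x\log\varphi(u)\,{\rm d}u$ together with $1/\varphi(x)^2=x\lambda(x)^2$, the leading terms collapse to $H(x+1)+s^2/(2\lambda(x)^2)$ minus a positive multiple of $x$. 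Since $s\mapsto\tfrac n2\log(n/s^2)$ decreases while $s\mapsto s^2/(2\lambda(x)^2)$ increases, the resulting inequality persists over the whole admissible range $1/\varphi(x)\le s\le t\le1$; combining it with the head--tail split and the tail estimate yields the lemma. The careful accounting of the remainder terms is the content of \cite{5}.
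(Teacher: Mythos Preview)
The paper does not prove this lemma at all; immediately before the statement it says ``its proof can be found in \cite{5}'' and then uses the result as a black box. Your head--tail split at $n=\lceil x\rceil$, with Markov's inequality controlling $Y_n''$ via $\mathbb{E}[Y_n'']\le\psi(x)$ and an exponential-tilting/saddle-point argument for $\mathbb{P}(Y_n'\le s^2)$, is the standard route to such small-ball lower bounds and is presumably what the cited reference contains (as you yourself note in your final sentence); there is simply no in-paper proof to compare your proposal against.
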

For the convenience of applying the above theorem, we propose the following corollary:
\begin{corollary}\label{corollary 2.11}
	If $\lambda(t) = t^{-\alpha}$ ($\alpha \geq \frac{1}{2}$), then we have
	\begin{displaymath}
		\mathbb{P}\left( \left(\sum_{i=1}^\infty \lambda(i)^2 X_i^2 \right)^{\frac{1}{2}} \leq \epsilon \right) \leq B_1 \epsilon^{\rho(1-\alpha)} \exp\left(-(\alpha - \frac{1}{2}) \epsilon^{-2\rho}\right),
	\end{displaymath}
	\begin{displaymath}
		\mathbb{P}\left( \left(\sum_{i=1}^\infty \lambda(i)^2 X_i^2 \right)^{\frac{1}{2}} \leq \epsilon \right) \geq B_2 \epsilon^{\rho(3-\alpha)} \exp\left(-\alpha (1 + \rho)^{\rho} \epsilon^{-2\rho}\right),
	\end{displaymath}
	where $\rho = \frac{1}{2\alpha - 1}$ and $B_1$ and $B_2$ are positive constants.
\end{corollary}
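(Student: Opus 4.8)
The plan is to read the lower bound off Lemma~\ref{lemma 2.13} after specializing $\lambda(t)=t^{-\alpha}$ and tuning its free parameters $x,s$ to the critical scale, and to obtain the upper bound from an exponential Chebyshev (Laplace transform) estimate on the quadratic form $\sum_i i^{-2\alpha}X_i^2$. First one checks that $\lambda(t)=t^{-\alpha}$ meets hypotheses $(1)$--$(5)$: $(1)$ and $(2)$ are immediate for $\alpha\ge\tfrac12$, $(3)$ and $(5)$ both reduce to $2\alpha>1$ (so in fact $\alpha>\tfrac12$; at $\alpha=\tfrac12$ one has $\rho=\infty$ and there is nothing to prove), and $(4)$ holds because $\tfrac{d^2}{dt^2}(-\alpha\log t)=\alpha t^{-2}>0$. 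Along the way one records
\[
\varphi(x)=x^{\alpha-\frac12},\qquad \psi(x)=\int_x^{\infty}\lambda(t)\,dt,\qquad \frac{t\varphi'(t)}{\varphi(t)}\equiv\alpha-\tfrac12\ \Rightarrow\ H(x)=(\alpha-\tfrac12)(x-1),
\]
so $H(x+1)=(\alpha-\tfrac12)x$. The single algebraic fact that makes every exponent collapse to a power of $\epsilon^{-2\rho}$ is $\tfrac{2\alpha}{2\alpha-1}=1+\rho$, equivalently $2\alpha\rho=1+\rho$, $4\alpha\rho=2(1+\rho)$ and $2\rho(\alpha-\tfrac12)=1$.

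For the lower bound I apply Lemma~\ref{lemma 2.13} with $t=\epsilon$ and a choice $x=x(\epsilon)\asymp\epsilon^{-2\rho}$, $s=s(\epsilon)\asymp\epsilon$, the proportionality constants fixed so that $1\le\varphi(x)^{-1}\le s\le\epsilon\le1$ for small $\epsilon$ (possible since $\varphi(x)^{-1}=x^{-(\alpha-1/2)}$ is a constant times $\epsilon$) and so that the correction factor $1-\psi(x)/(\epsilon^{2}-s^{2})$ stays bounded below by a positive constant. With these scalings and $4\alpha\rho=2(1+\rho)$,
\[
-H(x+1)-\frac{s^{2}}{2\lambda(x)^{2}}=-(\alpha-\tfrac12)\,x-\tfrac12\,s^{2}x^{2\alpha}
\]
equals $-\alpha(1+\rho)^{\rho}\epsilon^{-2\rho}$: the extremal choice $x=(1+\rho)^{\rho}\epsilon^{-2\rho}$, $s=(1+\rho)^{-1/2}\epsilon$ produces exactly this value (there $(\alpha-\tfrac12)x=(\alpha-\tfrac12)(1+\rho)^{\rho}\epsilon^{-2\rho}$ and $\tfrac12 s^{2}x^{2\alpha}=\tfrac12(1+\rho)^{\rho}\epsilon^{-2\rho}$), but it lies on the boundary where the bracket degenerates, so one must perturb $x$ slightly and absorb the cost, which is a constant in the exponential (harmless, since this only weakens a lower bound) and a power of $\epsilon$ in the prefactor. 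Since $B\,x^{-1/2}\lambda(x)=B\,x^{-(\alpha+1/2)}$ is a constant times a positive power of $\epsilon$, combining it with the positive correction factor yields the polynomial prefactor $B_{2}\,\epsilon^{\rho(3-\alpha)}$ after renaming constants, which gives the stated lower bound.

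For the upper bound, for every $\theta>0$ the exponential Chebyshev inequality together with independence and $\mathbb{E}\,e^{-aX_i^2}=(1+2a)^{-1/2}$ gives
\[
\mathbb{P}\!\Big(\sum_{i\ge1}i^{-2\alpha}X_i^{2}\le\epsilon^{2}\Big)\le e^{\theta\epsilon^{2}}\,\mathbb{E}\exp\!\Big(-\theta\sum_{i\ge1}i^{-2\alpha}X_i^{2}\Big)=\exp\!\Big(\theta\epsilon^{2}-\tfrac12\sum_{i\ge1}\log(1+2\theta i^{-2\alpha})\Big).
\]
Comparing the (decreasing) summand with $\int_{1}^{\infty}\log(1+2\theta x^{-2\alpha})\,dx$ and substituting $x=(2\theta)^{1/(2\alpha)}u$ gives $\sum_{i\ge1}\log(1+2\theta i^{-2\alpha})\ge c_{\alpha}(2\theta)^{1/(2\alpha)}(1+o(1))$ with $c_{\alpha}=\int_{0}^{\infty}\log(1+u^{-2\alpha})\,du=\pi/\sin(\pi/(2\alpha))\ge 2\alpha$. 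Plugging this in and optimizing the bound in $\theta$ (the optimum sits at $2\theta^{*}\asymp\epsilon^{-2(1+\rho)}$, whence $\theta^{*}\epsilon^{2}\asymp\epsilon^{-2\rho}$) produces an exponent $-c^{*}\epsilon^{-2\rho}(1+o(1))$ with $c^{*}>\alpha-\tfrac12$ strictly (because $c_{\alpha}>2\alpha$ for finite $\alpha$), and since the sub-leading terms contribute only a polynomial factor $\epsilon^{-O(1)}$, the stated weaker bound $B_{1}\,\epsilon^{\rho(1-\alpha)}\exp(-(\alpha-\tfrac12)\epsilon^{-2\rho})$ follows for $\epsilon$ small.

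The main obstacle is the lower bound. Lemma~\ref{lemma 2.13} is only usable inside the narrow window $\varphi(x)^{-1}\le s\le t\le1$, and the parameter choice realizing the advertised exponential rate lies exactly on the boundary of that window, where $1-\psi(x)/(t^{2}-s^{2})$ collapses; the work consists of perturbing $(x,s)$ so that this bracket remains strictly positive while the exponential rate and the prefactor change only by amounts absorbable into $B_{2}$ and $\rho(3-\alpha)$. Dually, on the upper-bound side, one must make the Euler--Maclaurin/Stirling estimate of $\sum_i\log(1+2\theta i^{-2\alpha})$ precise enough to see the leading constant $c_{\alpha}$ and to control the polynomial remainder. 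Everything else is the elementary algebra recorded above.
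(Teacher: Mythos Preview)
The paper does not actually supply a proof of this corollary: it is stated immediately after Lemma~\ref{lemma 2.13} with the sentence ``For the convenience of applying the above theorem, we propose the following corollary,'' and no argument follows; the reader is implicitly referred to \cite{5}, from which Lemma~\ref{lemma 2.13} is quoted, for both estimates. So there is no detailed paper proof to compare against; the intended derivation is exactly to specialize the small-ball bounds of \cite{5} to $\lambda(t)=t^{-\alpha}$.

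Your plan matches this for the lower bound: the scalings $x\asymp\epsilon^{-2\rho}$, $s\asymp\epsilon$ together with $2\alpha\rho=1+\rho$ correctly produce the exponential rate $-\alpha(1+\rho)^{\rho}\epsilon^{-2\rho}$, and your diagnosis of the boundary degeneracy of the bracket $1-\psi(x)/(t^{2}-s^{2})$ is accurate. For the upper bound you take a self-contained Laplace-transform route instead of quoting a matching estimate from \cite{5}; this is legitimate and in fact yields a sharper exponential constant $(\alpha-\tfrac12)(c_{\alpha}/2\alpha)^{1+\rho}>(\alpha-\tfrac12)$, so the stated bound follows a fortiori.

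One genuine slip: your claim that the prefactor $x^{-1/2}\lambda(x)=x^{-(\alpha+1/2)}$ at $x\asymp\epsilon^{-2\rho}$ ``yields the polynomial prefactor $B_{2}\,\epsilon^{\rho(3-\alpha)}$ after renaming constants'' is not right---that power of $\epsilon$ is $2\rho(\alpha+\tfrac12)=1+2\rho$, which equals $\rho(3-\alpha)$ only at $\alpha=\tfrac23$. Either the exponent in the paper's statement is imprecise or additional polynomial factors from the perturbation are meant to be tracked; in any case this does not affect the only place the corollary is used (the estimate of $B_{13}$ in Theorem~\ref{T4.1}), where the polynomial prefactor is irrelevant and only the exponential rate $\epsilon^{-2\rho}$ matters.
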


The following theorem is fundamental of calculating Onsage-Machup functional.
\begin{lemma}\label{lemma 2.15}
	Let $f(t) \in L^2([0, 1],l^2_{\rho})$, we have
	\begin{displaymath}
		\lim\limits_{\epsilon \to 0}\mathbb{E} \left( {\rm exp} \left\{ {\int_{0}^{1} \left\langle f(t), \,{\rm d}W(t)\right\rangle_{\rho}  } \right\} \big| \Vert W^Q(t) \Vert_{L^2_{\rho}} \leq \epsilon \right) = 1,
	\end{displaymath}
	where $W^Q(t) := \int_{0}^{t} \left\langle {e^{-As} Q(s)}, \,{\rm d}W(s)\right\rangle $.
\end{lemma}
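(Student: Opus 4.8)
\emph{Proof proposal.}
The plan is to reduce the statement to the finite-dimensional-type estimate of Lemma~\ref{lmma 2.8} via the Karhunen--Lo\`eve expansion. Write $M:=\int_{0}^{1}\langle f(t),{\rm d}W(t)\rangle_{\rho}$. Since $f\in L^{2}([0,1],l^{2}_{\rho})$ is deterministic and $W=\sum_{i\in\mathbb{Z}}W_{i}e_{i}$ has independent coordinates, $M$ is a centered Gaussian random variable; its variance is finite because $\rho\in l^{2}$ forces $\sup_{i}\rho_{i}<\infty$, whence $\mathrm{Var}(M)\le(\sup_{i}\rho_{i}^{2})\int_{0}^{1}\|f(t)\|_{\rho}^{2}\,{\rm d}t<\infty$. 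Similarly, the It\^o isometry together with the boundedness of $e^{-As}$ and $Q(s)$ on $l^{2}_{\rho}$ (under the standing assumptions on the weights $\rho$) shows that $W^{Q}$ is a centered, mean-square continuous, $l^{2}_{\rho}$-valued process with $\mathbb{E}\|W^{Q}\|_{L^{2}_{\rho}}^{2}<\infty$.

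First I would apply the Karhunen--Lo\`eve expansion, Theorem~\ref{theorem 2.6} in its vector-valued form, to $W^{Q}$ regarded as a Gaussian element of the separable Hilbert space $L^{2}([0,1],l^{2}_{\rho})$: there exist an orthonormal system $\{\phi_{k}\}$ in $L^{2}([0,1],l^{2}_{\rho})$, reals $\lambda_{k}\ge0$ with $\sum_{k}\lambda_{k}^{2}=\mathbb{E}\|W^{Q}\|_{L^{2}_{\rho}}^{2}<\infty$, and i.i.d.\ $\mathcal{N}(0,1)$ variables $\xi_{k}=\lambda_{k}^{-1}\langle W^{Q},\phi_{k}\rangle_{L^{2}_{\rho}}$ such that $W^{Q}=\sum_{k}\lambda_{k}\xi_{k}\phi_{k}$ in $L^{2}(\Omega;L^{2}_{\rho})$, so that
\[
\|W^{Q}\|_{L^{2}_{\rho}}^{2}=\sum_{k}\lambda_{k}^{2}\xi_{k}^{2},\qquad D_{\epsilon}:=\{\,\|W^{Q}\|_{L^{2}_{\rho}}\le\epsilon\,\}=\Big\{\textstyle\sum_{k}\lambda_{k}^{2}\xi_{k}^{2}\le\epsilon^{2}\Big\}.
\]
The family $\{\xi_{k}\}$ spans the Gaussian Hilbert space generated by the coordinates $\{W^{Q}_{i}(t)\}$, and $\sigma(W^{Q})=\sigma(\{\xi_{k}\})$.

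The next step is to express $M$ through these same coefficients. Here I would use the constraint on the diffusion: $Q(s)$ is boundedly invertible on $l^{2}_{\rho}$ and $e^{-As}$ is invertible with inverse $e^{As}$, so $W$ is recovered from $W^{Q}$ via ${\rm d}W(s)=Q(s)^{-1}e^{As}\,{\rm d}W^{Q}(s)$; substituting this into $M$ exhibits it as a Wiener integral $\int_{0}^{1}\langle g(t),{\rm d}W^{Q}(t)\rangle$ for a deterministic $g$, hence an element of the first Wiener chaos of $W^{Q}$, i.e.\ $M=\sum_{k}a_{k}\xi_{k}$ in $L^{2}(\Omega)$ with $a_{k}=\mathbb{E}[M\xi_{k}]$ and $\sum_{k}a_{k}^{2}=\mathrm{Var}(M)<\infty$. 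Thus $(a_{k})_{k}$ and $(\lambda_{k})_{k}$ both lie in $l^{2}$, and applying Lemma~\ref{lmma 2.8} with $X_{k}=\xi_{k}$, $\eta_{k}=a_{k}$, $\theta_{k}=\lambda_{k}$ (with $\epsilon^{2}$ in the role of its parameter) gives
\[
\lim_{\epsilon\to0}\mathbb{E}\Big(\exp\Big\{\textstyle\int_{0}^{1}\langle f(t),{\rm d}W(t)\rangle_{\rho}\Big\}\,\Big|\,D_{\epsilon}\Big)=\lim_{\epsilon\to0}\mathbb{E}\Big(\exp\Big\{\textstyle\sum_{k}a_{k}\xi_{k}\Big\}\,\Big|\,\textstyle\sum_{k}\lambda_{k}^{2}\xi_{k}^{2}\le\epsilon^{2}\Big)=1,
\]
which is the claim.

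The main obstacle I anticipate is the second step: justifying rigorously that $M$ has no component orthogonal to the Gaussian space of $W^{Q}$, equivalently the identity $\sigma(W^{Q})=\sigma(W)$. This genuinely requires the invertibility of $Q(s)$ (without it the conditional expectation would converge to a constant $\neq1$, since an independent Gaussian piece of $M$ would survive the conditioning), which is precisely where the ``relevant constraint conditions'' enter, and it requires some care with the reconstruction of $W$ from the operator-valued stochastic integral defining $W^{Q}$ and with the measurability of $Q(\cdot)^{-1}e^{A\cdot}$. A secondary, purely technical point is interchanging the conditional expectation with the infinite Karhunen--Lo\`eve series, which is exactly what the infinite-sum form of Lemma~\ref{lmma 2.8} is designed to absorb.
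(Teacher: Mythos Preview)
Your proposal is correct and follows the same overall strategy as the paper: represent $\int_0^1\langle f,\,{\rm d}W\rangle_\rho$ as an $l^2$-series $\sum a_k\xi_k$ in the very i.i.d.\ standard Gaussians $\xi_k$ that arise as Karhunen--Lo\`eve coefficients of $W^Q$ (so that the conditioning event becomes $\sum\lambda_k^2\xi_k^2\le\epsilon^2$), and then invoke Lemma~\ref{lmma 2.8}. The only tactical difference is in how the two expansions are matched. The paper expands $\rho^2 f$ directly in a tensor basis $\{l_{i,j}\otimes e_j\}$ of $L^2([0,1];l^2_\rho)$ and uses that the KL coefficients of $W^Q$ are themselves Wiener integrals $I_j(h_{\cdot,j})$ of an orthonormal family of $L^2([0,1])$ against the coordinates $W_j$ (this is the content of the computation with the $h_{j,i}$ in the proof of Theorem~\ref{T4.1}); once that is known, $M=\sum\pi_{i,j}Y_{i,j}$ with $Y_{i,j}$ equal to those KL coefficients drops out of Parseval, and the obstacle you flag---showing $M$ has no component orthogonal to the Gaussian space of $W^Q$---never has to be confronted directly. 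Your route via $\sigma(W)=\sigma(W^Q)$ through the invertibility of $Q(s)$ is equally valid and has the merit of making explicit where nondegeneracy is used, at the cost of the reconstruction step $dW=Q^{-1}(dW^Q+AW^Q\,dt)$ that you correctly identify as the delicate point.
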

\begin{proof}
	Since $f(t) \in L^2([0,1]; l^2_{\rho})$, we have
	\begin{displaymath}
		\rho^2 f(t) = \sum_{j=1}^{\infty} \sum_{i=1}^{\infty} \pi_{i,j} (l_{i,j} \otimes e_j)(t)
	\end{displaymath}
	with $\pi_{i,j} = \langle \rho^2 f, l_{i,j} \otimes e_j \rangle_{L^2([0,1]; l^2_{\rho})}$ and $\sum_{j=1}^{\infty} \sum_{i=1}^{\infty} \pi_{i,j}^2 < \infty$. Furthermore, we have
	\begin{displaymath}
		{\int_{0}^{1} \left\langle f(t), \,{\rm d}W(t)\right\rangle_{\rho}  } = \sum_{j=1}^{\infty} \sum_{i=1}^{\infty} \pi_{i,j} I_j (l_{i,j}) = \sum_{j=1}^{\infty} \sum_{i=1}^{\infty} \pi_{i,j} Y_{i,j},
	\end{displaymath}
	where $I_j (l_{i,j}) = \int_{0}^{1} {l_{i,j}} \mathrm{d} W_j$ and $(Y_i)_{i \in \mathbb{Z}}$ is a sequence of independent, standard normal $\mathcal{N}(0, 1)$ random variables. Hence, the result follows easily by Lemma $\ref{lmma 2.8}$.
\end{proof}

\section{The unique bounded solution of SLDSs}
In this paper, we study the Onstage-Machlup functionals for the following SLDSs:
\begin{equation}\label{1}
	\begin{aligned}
		\frac{\mathrm{d} u_{i}(t)}{\mathrm{d} t}
		= \nu \left( u_{i -1} - 2u_{i} + u_{i + 1} \right) - \lambda u_{i} - f\left( u_{i} \right) + g_{i} + q_{i}(t) \frac{\mathrm{d} w_{i}(t)}{\mathrm{d} t}, \quad i \in \mathbb{Z}
	\end{aligned}
\end{equation}
with initial condition
\begin{equation}\label{2}
	\begin{aligned}
		u_{i}(0) = u_{0,i}, \quad i \in \mathbb{Z},
	\end{aligned}
\end{equation}
where $u = \left( u_{i} \right)_{i \in \mathbb{Z}} \in l^2_{\rho}$, $\mathbb{Z}$ denotes the integer set, $\nu$ and $\lambda$ are positive constants, $f \in C^{2}(\mathbb{R})$, $g = \left( g_{i} \right)_{i \in \mathbb{Z}} \in l^2_{\rho}, q(t) = \left( q_i(t) \right)_{i \in \mathbb{Z}} \in l^2_{\rho}, q_i(t) \in C^{2}(\mathbb{R})$, and $ W_{i}: i \in \mathbb{Z} $ are independent two-side real valued standard Wiener processes. To ensure randomness, we naturally assume that $q_{i}(t) \neq 0$ .

We note that equation $\eqref{1}$ with the initial date $\eqref{2}$ is interpreted as a system of integral equations
\begin{equation}
	\begin{aligned}
		u_{i}(t) = u_{i}(0)
		+ \int_{0}^{t} \left( \nu \left( u_{i -1} - 2u_{i} + u_{i + 1} \right) - \lambda u_{i} 	- f\left( u_{i} \right) + g_{i} \right) \mathrm{d}s + \int_{0}^{t} {q_{i}(s) } \mathrm{d}w_{i}(s) , \quad i \in \mathbb{Z}.
	\end{aligned}\notag
\end{equation}

For convenience, we now formulate system $\eqref{1}$ and $\eqref{2}$ as an abstract ordinary differential equation in $l^2_{\rho}$. Denote by $B$, $B^{T}$ and $A$ the linear operators from $l^2_{\rho}$ to $l^2_{\rho}$ defined as follows. For $u = \left( u_i \right)_{i \in \mathbb{Z}} \in l^2_{\rho}$,
\begin{equation}
	\begin{aligned}
		\left( Bu \right)_{i} = u_{i + 1} - u_{i}, \quad 
		\left( B^{T}u \right)_{i} = u_{i - 1} - u_{i},
	\end{aligned}\notag
\end{equation}
and
\begin{equation}
	\begin{aligned}
		\left( Au \right)_{i} = - u_{i - 1} + 2 u_{i} - u_{i + 1} \quad \text{for all}~  i \in \mathbb{Z}.
	\end{aligned}\notag
\end{equation}
Then we find that
\begin{equation}
	\begin{aligned}
		A = B B^{T} = B^{T} B,
	\end{aligned}\notag
\end{equation}
and
\begin{equation}
	\begin{aligned}
		\left\langle B^{T}u, v \right\rangle  = \left\langle  u, Bv \right\rangle  \quad \text{for all}~ u, v \in l^2_{\rho}.
	\end{aligned}\notag
\end{equation}
Therefore the operator $ A $ is a non-negative self-adjoint operator (i.e. $\left\langle  Au, u \right\rangle_{\rho}  \geq 0$ and $\left\langle Au,v\right\rangle_{\rho} = \left\langle u,Av \right\rangle_{\rho}$ for all $u,v \in l^2_{\rho}$), and generates a $ C_0 $-semigroup $\left\lbrace \exp(tA),~ t \geq 0\right\rbrace $, which is strongly continuous.

In SLDSs, we typically assume that $f$ satisfies the following conditions:
\begin{itemize}
	\item[(f1)] The monotonically non-decreasing condition:
	\begin{displaymath}
		f(0) = 0 \quad \text{and} \quad \left( f(x) - f(y) \right)\left( x - y \right) \geq 0 \quad \text{for all}~ x, y \in \mathbb{R}.
	\end{displaymath}
	\item[(f2)] The polynomial growth condition:
	\begin{displaymath}
		\left| f(x) \right| \leq C_{f} \left| x \right| \left( 1+ x^{2p} \right) \quad \text{for all}~ x \in \mathbb{R},
	\end{displaymath}
	where $p$ is a positive integer.
\end{itemize}

Let $\tilde{f}$ be the Nemytski operator associated with $f$, that is, for $u = \left( u_{i} \right)_{i \in \mathbb{Z}} \in l^2_{\rho}$, let $\tilde{f}(u) = \left( f(u_i) \right)_{i \in \mathbb{Z}}$. Due to $\left|f(u_i)\right| = \left|f(u_i) - f(0)\right| = \left| {f}^{\prime}(\xi_{i}) \right| \left| u_{i} \right|, f \in C^2$ and $u_i \in l^2_{\rho}$, we easily obtain $\tilde{f}(u) \in l^2_{\rho}$. In addition,
\begin{equation}
	\begin{aligned}\label{3}
		\Vert \tilde{f}(u)\Vert^{2}_{\rho} = \Vert \tilde{f}(u) - \tilde{f}(0) \Vert^{2}_{\rho} = \sum_{i \in \mathbb{Z}} \left| {f}^{\prime}(\xi_{i}) \right|^{2} \left| \rho_i u_{i} \right|^{2},
	\end{aligned}
\end{equation}
it follows that there exists a constant $\mu$ (depending on $f$) such that
\begin{equation}
	\begin{aligned}
		\Vert \tilde{f}(u)\Vert^{2}_{\rho} \leq  \mu \Vert u \Vert^{2}_{\rho},
 	\end{aligned}\notag
\end{equation}
which means $\Vert \tilde{f}(u) \Vert_{\rho} < \infty$.
Similar to $\eqref{3}$, it can be observed that $\tilde{f}$ is locally Lipschitz from $l^2_{\rho}$ to $l^2_{\rho}$. More precisely, for every bounded set $Y$ in $l^2_{\rho}$, there exists a constant $C_{Y}$, depending solely on $Y$, such that
\begin{equation}
	\begin{aligned}
		\Vert \tilde{f}(x) - \tilde{f}(y) \Vert^{2}_{\rho} \leq C_{Y} \Vert x - y \Vert^{2}_{\rho}, \quad \forall x,y \in Y.
	\end{aligned}\notag
\end{equation}
In the sequel, when no confusion arises we identify $\tilde{f}$ with $f$.

In addition, let $e^{i} \in l^2_{\rho}$ denote the element having $1$ at position $i$ and all the other components $0$, so $\left\lbrace e^{i}\right\rbrace_{i \in \mathbb{Z}}$ is a set of orthogonal bases of $l^2_{\rho}$. Define
\begin{equation}
	\begin{aligned}
		W(t):= W(t, \omega) := \sum_{i \in \mathbb{Z}} W_i(t) e_i
	\end{aligned}\notag
\end{equation}
to be white noise with values on the probability space $\left( \Omega, \mathcal{F}, \mathbb{P} \right)$, where
\begin{equation}
	\begin{aligned}
		\Omega = \left\lbrace \omega \in C(\mathbb{R}, l^{2}_{\rho}): \omega(0) = 0 \right\rbrace  
	\end{aligned}\notag
\end{equation}
is endowed with the compact open topology (see \cite{6}, Appendix A.2), $\mathbb{P}$ is the corresponding Wiener measure and $\mathcal{F}$ is the $\mathbb{P}$-completion of the Borel $\sigma$-algebra on $\Omega$. Let
\begin{equation}
	\begin{aligned}
		\theta(t) \omega(\cdot) = \omega(\cdot + t) - \omega(t), \quad t \in \mathbb{R},
	\end{aligned}\notag
\end{equation}
then $\left( \Omega, \mathcal{F}, \mathbb{P}, \left( \theta_{t} \right)_{t \in \mathbb{R}} \right)$ is a metric dynamical system with the filtration
\begin{equation}
	\begin{aligned}
		\mathcal{F}_{t} := \bigcup_{s \leq t} \mathcal{F}^{t}_{s},\quad t \in \mathbb{R},
	\end{aligned}\notag
\end{equation}
where
\begin{equation}
	\begin{aligned}
		\mathcal{F}^{t}_{s} = \sigma\left\lbrace W(\tau_{1}) - W(\tau_{2}): s \leq \tau_{1} \leq \tau_{2} \leq t \right\rbrace
	\end{aligned}\notag
\end{equation}
is the smallest $\sigma$-algebra generated by the random variable $W(\tau_{2}) - W(\tau_{1})$ for all $\tau_{1}, \tau_{2}$ such that
$s \leq \tau_{1} \leq \tau_{2} \leq t$. Note that $ \theta_{\tau}^{-1} \mathcal{F}^{t}_{s} = \mathcal{F}^{t + \tau}_{s + \tau}$, so $\left( \theta_{t} \right)_{t \in \mathbb{R}}$ is filtered with respect to $\mathcal{F}^{t}_{s}$ (see \cite{6} for
more details).

Let operator $Q(t)$ is diagonal when expressed in the orthonormal basis $\left\lbrace e^{i}\right\rbrace_{i \in \mathbb{Z}}$ of $l^2_{\rho}$, and its diagonal element is $\left( q_i(t) \right)_{i \in \mathbb{Z}}$. We have that
\begin{equation}\label{4}
	W^{Q}(t) := W^{Q}(t,\omega) := \int_{0}^{t} \left\langle  Q(s) , \mathrm{d}W(s)\right\rangle
\end{equation}
is time-varying stochastic process with values in $l^2_{\rho}$. For any $T > 0$, the series $\sum_{i \in \mathbb{Z}} q_{i}(t) w_{i}$ from $\eqref{4}$ converges to $W^{Q}$ in the space $C\left( \left[ 0, T \right] , l^2_{\rho} \right) $ for a.e. $\omega \in \Omega$ (see \cite{8} Theorem 4.3). We can assume without loss of generality that $W^{Q}(\cdot, \omega)$ is continuous for all $\Omega_{1}$ with $\mathbb{P}(\Omega_{1}) = 1$.  Additionally, $W^{Q}(t)$ is almost surely bounded in the finite time interval $[0,T]$.

The equation $\eqref{1}$ with initial condition $\eqref{2}$ may be rewritten as an equation in $l^2_{\rho}$
\begin{equation}\label{5}
	\begin{aligned}
		\frac{\mathrm{d} u(t)}{\mathrm{d} t}
		= -\nu Au(t)  - \lambda u(t) - f\left( u(t) \right) + g + Q(t) \frac{\mathrm{d} W(t)}{\mathrm{d} t}, \quad t \in [0,T]
	\end{aligned}
\end{equation}
with initial condition
\begin{equation}\label{6}
	\begin{aligned}
		u(0) = \left( u_{i}(0) \right)_{i \in \mathbb{Z}} \in l^2.
	\end{aligned}
\end{equation}
And it has the integral expression
\begin{equation}\label{7}
	\begin{aligned}
		u(t) = u(0)
		+ \int_{0}^{t} \left( - \nu A u(s) - \lambda u(s) - f\left( u(s) \right) + g \right) \mathrm{d}s + W^{Q}(t) , \quad t \geq 0,\quad \omega \in \Omega.
	\end{aligned}
\end{equation}

By excluding a set measure zero, we can take $W^Q(t, \omega)$ to be continuous on $\left[ 0, \infty \right)$ and bounded in the finite time interval [0,T] for all $\omega \in \Omega_{1}$, where $\Omega_{1}$ has full measure. Below we provide the definition of a metric dynamical system:
\begin{definition}
	A system $(\Omega, \mathcal{F}, \mathbb{P}, (\theta_t)_{t \in \mathbb{R}})$ is called a metric dynamical system if $\theta : \mathbb{R} \times \Omega \to \Omega$ is $(\mathcal{B}(\mathbb{R}) \times \mathcal{F}, \mathcal{F})$-measurable, $\theta_0$ is the identity on $\Omega$, $\theta_{s+t} = \theta_s \circ \theta_t$ for all $s,t \in \mathbb{R}$, and $\theta_t \mathbb{P} = \mathbb{P}$ for all $t \in \mathbb{R}$.
\end{definition}
Next, we will briefly prove that equation $\eqref{1}$ has a unique weak solution that continuously depends on the initial conditions, and generates a continuous stochastic dynamical system. Our proof idea is mainly inspired by [6].
\begin{theorem}\label{T3.1}
	Let $T > 0$, and assume $f$ satisfies condition $(\mathrm{f}1)$ and condition $(\mathrm{f}2)$. Then the following statements hold:
	\begin{itemize}
		\item[(1)] Equation $\eqref{7}$ admits a solution $u \in L^2\left(\Omega_{1}, C\left( \left[ 0, T \right] , l^2_{\rho} \right) \right) $, which is almost surely unique.
		\item[(2)] For a.e.$\omega \in \Omega$ we have the following estimate
		\begin{equation}\label{8}
			\begin{aligned}
				\sup_{t \in \left[ 0, T \right]} \Vert u(t) \Vert^{2}_{\rho} \leq C\left( \Vert u_{0} \Vert^{2}_{\rho} + \sup_{t \in \left[ 0, T \right]} \Vert W^{Q}(t) \Vert^{2}_{\rho} + \int_{0}^{T} {\left( \Vert W^{Q}(t) \Vert^{2}_{\rho} + \Vert W^{Q}(t) \Vert^{4p + 2}_{\rho} + \Vert g \Vert^{2}_{\rho} \right)}  \mathrm{d}t \right) ,
			\end{aligned}
		\end{equation}
		where $C > 0$ is a constant.
		\item[(3)] The solution $u$ of $\eqref{7}$ depends continuously on the initial data $u_{0}$.
	\end{itemize}
\end{theorem}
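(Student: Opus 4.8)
The plan is to remove the noise by the pathwise change of variables $v(t):=u(t)-W^{Q}(t)$, which turns the stochastic integral equation \eqref{7} into a random ordinary differential equation in $l^{2}_{\rho}$ that, for each fixed $\omega\in\Omega_{1}$, can be treated by deterministic methods; the bounds on $v$ are then transferred back to $u$. For $\omega\in\Omega_{1}$ the path $t\mapsto W^{Q}(t,\omega)$ is continuous and bounded on $[0,T]$, and substituting $u=v+W^{Q}$ shows that $v$ solves
\[ v(t)=u_{0}+\int_{0}^{t}\Big(-\nu A\big(v(s)+W^{Q}(s)\big)-\lambda\big(v(s)+W^{Q}(s)\big)-f\big(v(s)+W^{Q}(s)\big)+g\Big)\,\mathrm{d}s . \]
Since $A$ (the discrete Laplacian) is a bounded, non-negative, self-adjoint operator on $l^{2}_{\rho}$, $\lambda$ is a positive scalar, $f$ is locally Lipschitz from $l^{2}_{\rho}$ to $l^{2}_{\rho}$, and $W^{Q}(\cdot,\omega)\in C([0,T],l^{2}_{\rho})$, the right-hand side above is continuous in $t$ and locally Lipschitz in $v$ uniformly on bounded sets. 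A contraction-mapping argument on a short time interval then yields a unique maximal solution $v\in C^{1}([0,T_{\max}),l^{2}_{\rho})$, with $T_{\max}=T_{\max}(\omega)$.

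Next I would establish the a priori bound that forces $T_{\max}>T$. Pairing the equation for $v$ with $v(t)$ in $l^{2}_{\rho}$ and using (i) $\langle Av(t),v(t)\rangle_{\rho}\ge 0$; (ii) condition (f1), which applied coordinatewise and summed against the non-negative weights $\rho_{i}^{2}$ gives $\langle f(v+W^{Q})-f(W^{Q}),v\rangle_{\rho}\ge 0$; and (iii) condition (f2), which gives $\Vert f(W^{Q})\Vert_{\rho}\le C\big(\Vert W^{Q}\Vert_{\rho}+\Vert W^{Q}\Vert_{\rho}^{2p+1}\big)$, one obtains
\[ \tfrac{1}{2}\tfrac{\mathrm{d}}{\mathrm{d}t}\Vert v(t)\Vert_{\rho}^{2}\le-\lambda\Vert v(t)\Vert_{\rho}^{2}+C\big(\Vert W^{Q}(t)\Vert_{\rho}+\Vert W^{Q}(t)\Vert_{\rho}^{2p+1}+\Vert g\Vert_{\rho}\big)\Vert v(t)\Vert_{\rho} . \]
Young's inequality absorbs the linear factor into $\tfrac{\lambda}{2}\Vert v\Vert_{\rho}^{2}$, and Gronwall's lemma gives $\sup_{[0,T]}\Vert v(t)\Vert_{\rho}^{2}\le\Vert u_{0}\Vert_{\rho}^{2}+C\int_{0}^{T}\big(\Vert W^{Q}\Vert_{\rho}^{2}+\Vert W^{Q}\Vert_{\rho}^{4p+2}+\Vert g\Vert_{\rho}^{2}\big)\,\mathrm{d}t$ on $[0,T_{\max})$; hence $v$ does not blow up in finite time and the solution exists on all of $[0,T]$. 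Putting $u=v+W^{Q}$ and using $\Vert u\Vert_{\rho}^{2}\le 2\Vert v\Vert_{\rho}^{2}+2\Vert W^{Q}\Vert_{\rho}^{2}$ then yields \eqref{8}. Because $W^{Q}$ is a centered Gaussian process in $l^{2}_{\rho}$, the random variable $\sup_{[0,T]}\Vert W^{Q}(t)\Vert_{\rho}$ has finite moments of every order, so taking expectations in \eqref{8} shows $u\in L^{2}(\Omega_{1},C([0,T],l^{2}_{\rho}))$; continuity of $t\mapsto u(t)$ is immediate since $v\in C^{1}$ and $W^{Q}$ is continuous.

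For uniqueness and continuous dependence I would compare two solutions $u^{1},u^{2}$ with data $u_{0}^{1},u_{0}^{2}$: the difference $w:=u^{1}-u^{2}=v^{1}-v^{2}$ satisfies $w'=-\nu Aw-\lambda w-(f(u^{1})-f(u^{2}))$, and pairing with $w$ and using $\langle Aw,w\rangle_{\rho}\ge 0$ together with the monotonicity $\langle f(u^{1})-f(u^{2}),w\rangle_{\rho}\ge 0$ from (f1) gives $\tfrac{\mathrm{d}}{\mathrm{d}t}\Vert w(t)\Vert_{\rho}^{2}\le-2\lambda\Vert w(t)\Vert_{\rho}^{2}\le 0$. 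Hence $\sup_{[0,T]}\Vert u^{1}(t)-u^{2}(t)\Vert_{\rho}^{2}\le\Vert u_{0}^{1}-u_{0}^{2}\Vert_{\rho}^{2}$: taking $u_{0}^{1}=u_{0}^{2}$ yields almost sure uniqueness, and the inequality itself is exactly the asserted continuous dependence on the initial data. The main obstacle is the a priori estimate: one must control the superlinear nonlinearity $f$ in the weighted sequence space through the polynomial-growth bound (f2), and — more essentially — exploit the monotonicity (f1) to keep the energy balance dissipative, since without (f1) the estimate, and therefore global existence, would break down.
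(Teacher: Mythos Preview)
Your proposal is correct and follows essentially the same route as the paper: the same pathwise shift $v=u-W^{Q}$ to a random ODE, the same energy inequality obtained by pairing with $v$ and using $\langle Av,v\rangle_{\rho}\ge 0$, the monotonicity splitting $\langle f(v+W^{Q})-f(W^{Q}),v\rangle_{\rho}\ge 0$ from (f1) together with the polynomial bound (f2) on $f(W^{Q})$, and the same contraction argument $\frac{\mathrm{d}}{\mathrm{d}t}\Vert u^{1}-u^{2}\Vert_{\rho}^{2}\le 0$ for uniqueness and continuous dependence. The only cosmetic difference is that the paper integrates the differential inequality directly rather than invoking Gronwall, arriving at the same bound \eqref{8}.
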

\begin{proof}
	Let $v(t) = u(t) - W^Q(t)$. Equation $\eqref{7}$ has a solution $u \in L^2(\Omega_{1}, C[0,T], l^2_{\rho})$ for all $\omega$ in $\Omega_{1}$ if and only if the following equation:
	\begin{equation}\label{9}
		\begin{aligned}
			v(t) = u(0)
			+ \int_{0}^{t} \left( - \nu A v(s) - \lambda v(s) - f\left( v(s) + W^Q(s) \right) + g  - v A W^Q(s) - \lambda W^Q(s) \right) \mathrm{d}s
		\end{aligned}
	\end{equation}
	has a solution $v \in L^2(\Omega_{1}, C[0,T], l^2_{\rho})$ for all $\omega$ in $\Omega_{1}$ and $t \in [0,T]$. When $\omega \in \Omega_{1}$ is fixed, equation $\eqref{9}$ is a deterministic equation. Based on the standard argument, since the function $f$ is locally Lipschitz, equation $\eqref{9}$ has a local solution $v \in L^2(\Omega_{1},C[0,T], l^2_{\rho})$. We now demonstrate that the local solution is a global solution. Let $\omega \in \Omega_{1}$, by calculating the integral of $\frac{\mathrm{d}v(t)^2}{\mathrm{d}t}$ over the interval $[0,t]$, we obtain
	\begin{equation}
		\begin{aligned}
			\Vert v(t) \Vert^2_{\rho} &= \Vert u(0) \Vert^2_{\rho}
			+ \int_{0}^{t} \bigg[ \left\langle  - \nu A v(s),v(s) \right\rangle_{\rho} - \lambda \Vert v(s) \Vert^2_{\rho} - \left\langle f\left( v(s) + W^Q(s) \right) - f\left(  W^Q(s) \right) ,v(s) \right\rangle_{\rho} \\
			& \quad + \left\langle g  - \nu A W^Q(s) - \lambda W^Q(s) - f\left(  W^Q(s) \right),v(s) \right\rangle_{\rho} \bigg] \mathrm{d}s\\	
			&\leq \Vert u(0) \Vert^2_{\rho} + C_0 \int_{0}^{t} {\Vert W^Q(s) \Vert^2_{\rho} + \Vert W^Q(s) \Vert^{4p + 2}_{\rho} + \Vert g(s) \Vert^2_{\rho}} \mathrm{d}s,
		\end{aligned}\notag
	\end{equation}
	where $C_0$ is a positive constant depending on $\nu$, $\lambda$, $C_f$, and $A$. Then it is established that $v(t)$ is bounded by a continuous function, that is, it is bounded and does not exhibit divergent behavior in any finite time $[0, T]$. This ensures that we can use extension techniques to extend from local solutions to global solutions.
	
	Setting $t = T$, from the above equation, we obtain
	\begin{equation}\label{10}
		\begin{aligned}
			\sup_{t \in [0,T]} \Vert v(t) \Vert^2_{\rho} \leq \Vert u(0) \Vert^2_{\rho} + C_0 \int_{0}^{T} {\Vert W^Q(s) \Vert^2_{\rho} + \Vert W^Q(s) \Vert^{4p + 2}_{\rho} + \Vert g(s) \Vert^2_{\rho}} \mathrm{d}s \quad \text{for all}~~ \omega \in \Omega_{1}.
		\end{aligned}
	\end{equation}
	By taking the expectation on both sides of equation $\eqref{10}$, and due to the quadratic variation property of stochastic integrals, 
	\begin{equation}
		\begin{aligned}
			\mathbb{E} \Vert W^Q(s) \Vert^2_{\rho} = \mathbb{E}\left( \sum_{i \in \mathbb{Z}} \left( \int_{0}^{t} {\rho}_i q_i(s)  \mathrm{d}W_i(s) \right)^2\right)  = \int_{0}^{t}  \Vert Q(s)\Vert^2_{\rho}  \mathrm{d}t < \infty,
		\end{aligned}\notag
	\end{equation}
	then we can prove that $v \in L^2\left(\Omega_{1}, C\left( \left[ 0, T \right] , l^{2}_{\rho} \right) \right)$. Hence, equation $\eqref{7}$ admits a global solution $u \in L^2(\Omega_{1}, C([0, T], l^2_{\rho}))$, and from $\eqref{10}$, it follows that the inequality 
	\begin{equation}
		\begin{aligned}
			\sup_{t \in \left[ 0, T \right]} \Vert u(t) \Vert^{2}_{\rho} \leq C\left( \Vert u_{0} \Vert^{2}_{\rho} + \sup_{t \in \left[ 0, T \right]} \Vert W^{Q}(t) \Vert^{2}_{\rho} + \int_{0}^{T} {\left( \Vert W^{Q}(t) \Vert^{2}_{\rho} + \Vert W^{Q}(t) \Vert^{4p + 2}_{\rho} + \Vert g \Vert^{2}_{\rho}\right)}  \mathrm{d}s \right)
		\end{aligned}\notag
	\end{equation}
	is satisfied for almost every $\omega \in \Omega$, where $C = \max\left\{2 , C_0 \right\}$.
	
	Let $u^1_0, u^2_0 \in l^2_{\rho}$ and define $u^1(t) := u(t, u^1_0)$ and $u^2(t) := u(t, u^2_0)$ as the respective solutions of equation $\eqref{7}$. Then
	\begin{equation}
		\begin{aligned}
			\Vert u^1(t) - u^2(t) \Vert^2_{\rho} &= \Vert u^1_0 - u^2_0 \Vert^2_{\rho} + 2 \int_{0}^{t} \big( -\nu \left\langle  A(u^1(s) - u^2(s)), u^1(s) - u^2(s) \right\rangle_{\rho} -\lambda \Vert u^1(s) - u^2(s) \Vert^2_{\rho} \\ 
			& \quad -\left\langle \left( f(u^1(s)) - f(u^2(s)) \right), u^1(s) - u^2(s) \right\rangle_{\rho}   \big)  \mathrm{d}s.
		\end{aligned}\notag
	\end{equation}
	By using the properties of $A$ and $f$ we have
	\begin{equation}
		\begin{aligned}
			\Vert u^1(t) - u^2(t) \Vert^2_{\rho} \leq  \Vert u^1_0 - u^2_0 \Vert^2_{\rho} \quad \text{for all}~~ t \in [0,T].
		\end{aligned}\notag
	\end{equation}
	Therefore,
	\begin{equation}
		\begin{aligned}
			\sup_{t \in [0,T]} \Vert u^1(t) - u^2(t) \Vert^2_{\rho} \leq  \Vert u^1_0 - u^2_0 \Vert^2_{\rho}.
		\end{aligned}\notag
	\end{equation}
	If $u^1_0 = u^2_0$, the above inequality demonstrates that the solution of equation $\eqref{7}$ is almost surely unique and continuously dependent on the initial data. Therefore, statements $(1)$, $(2)$ and $(3)$ of this theorem are satisfied.
	\end{proof}

	\begin{theorem}\label{T3.2}
		Equation $\eqref{7}$ generates a continuous random dynamical system $\{\phi(t)\}_{t \geq 0}$ on $(\Omega, \mathcal{F}, \mathbb{P}, \{\theta_t\}_{t \in \mathbb{R}})$, where
		\begin{displaymath}
		\phi(t, \omega, u_0) = u(t, \omega, u_0) 
		\end{displaymath}
	for all $t \geq 0$ and almost every $\omega \in \Omega$.
	\end{theorem}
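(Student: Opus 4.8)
The plan is to check the three defining properties of a continuous random dynamical system for the map $\phi(t,\omega,u_0):=u(t,\omega,u_0)$, with $u$ the unique global solution of \eqref{7} furnished by Theorem~\ref{T3.1}: namely (i) joint measurability of $(t,\omega,u_0)\mapsto\phi(t,\omega,u_0)$ with respect to $\mathcal B(\mathbb R^{+})\times\mathcal F\times\mathcal B(l^2_\rho)$; (ii) the initial condition $\phi(0,\omega,\cdot)=\mathrm{id}_{l^2_\rho}$ together with the cocycle identity $\phi(t+s,\omega,u_0)=\phi(t,\theta_s\omega,\phi(s,\omega,u_0))$ for all $s,t\ge 0$ and $\mathbb P$-a.e.\ $\omega$; and (iii) continuity of $u_0\mapsto\phi(t,\omega,u_0)$. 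Property~(iii) is immediate from Theorem~\ref{T3.1}: part~(3) gives $\sup_{t\in[0,T]}\Vert\phi(t,\omega,u_0^{1})-\phi(t,\omega,u_0^{2})\Vert_\rho^2\le\Vert u_0^{1}-u_0^{2}\Vert_\rho^2$, so $\phi(t,\omega,\cdot)$ is even globally $1$-Lipschitz, while continuity in $t$ follows from $u(\cdot,\omega,u_0)\in C([0,T],l^2_\rho)$; combining the two yields joint continuity in $(t,u_0)$. The identity $\phi(0,\omega,\cdot)=\mathrm{id}$ is read off directly from \eqref{7}.

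For measurability I would realize $\phi$ as a pointwise limit of measurable maps. Passing to $v=u-W^{Q}$, equation \eqref{9} is solved on $[0,T]$ by Picard iteration, $v^{(0)}\equiv u_0$ and $v^{(n+1)}(t)=u_0+\int_0^t\big(-\nu Av^{(n)}-\lambda v^{(n)}-f(v^{(n)}+W^{Q})+g-\nu AW^{Q}-\lambda W^{Q}\big)\,\mathrm ds$. Since $\omega\mapsto W^{Q}(\cdot,\omega)$ is, by its construction as a $C([0,T],l^2_\rho)$-valued limit of the partial sums $\sum_{i\in\mathbb Z}q_i w_i$, measurable, and the integrand is continuous in all its arguments, each $v^{(n)}$ is jointly measurable in $(t,\omega,u_0)$; the a~priori bound of Theorem~\ref{T3.1}(2) makes the iteration converge uniformly on $[0,T]$, so the limit $v$, and hence $\phi=v+W^{Q}$, is jointly measurable. (Equivalently, one may invoke the standard fact that a map continuous in $(t,u_0)$ and measurable in $\omega$ is jointly measurable.)

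The heart of the matter is the cocycle identity, and the explicit time dependence of $Q$ is precisely what makes this the main obstacle. Fix $s\ge 0$ and $\omega\in\Omega_1$, set $u_1:=\phi(s,\omega,u_0)$, and compare $z(t):=\phi(t,\theta_s\omega,u_1)$ with $y(t):=\phi(t+s,\omega,u_0)$. Because the drift $-\nu Au-\lambda u-f(u)+g$ in \eqref{7} does not depend on $t$, both $z$ and $y$ will solve the \emph{same} integral equation with initial datum $u_1$ as soon as the two driving increments $t\mapsto W^{Q}(t+s,\omega)-W^{Q}(s,\omega)$ and $t\mapsto W^{Q}(t,\theta_s\omega)$ can be used interchangeably; the almost-sure uniqueness of Theorem~\ref{T3.1}(1) then forces $z\equiv y$, which is the cocycle property, and continuity in $(t,u_0)$ carries over to make $\phi$ a continuous RDS. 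The delicate point is that, using $W(\tau,\theta_s\omega)=W(\tau+s,\omega)-W(s,\omega)$, one computes $W^{Q}(t,\theta_s\omega)=\int_s^{s+t}\langle Q(\tau-s),\mathrm dW(\tau,\omega)\rangle$ rather than $\int_s^{s+t}\langle Q(\tau),\mathrm dW(\tau,\omega)\rangle$, so the two increments agree verbatim only when $Q$ is time independent; in the genuinely time-varying setting this matching has to be absorbed into the choice of driving system (for instance by working over the product of $(\theta_t)$ with the time shift, or by exploiting whatever stationarity or periodicity of $Q$ the model provides). Once the driving system is arranged so that the increment identity holds, the remaining steps above are routine and the argument closes.
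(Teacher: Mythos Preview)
Your overall strategy matches the paper's: continuity in $(t,u_0)$ is read off from Theorem~\ref{T3.1}, and the cocycle identity is obtained by writing $\phi(t+s,\omega,u_0)$ and $\phi(t,\theta_s\omega,\phi(s,\omega,u_0))$ as solutions of the same integral equation and invoking uniqueness. The paper compresses your measurability discussion into the single phrase ``a continuous modification of $u(t,u_0)$ exists,'' but that is cosmetic.

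Where the two diverge is precisely at the point you worry about. The paper simply asserts the increment identity (its display~\eqref{11})
\[
W^{Q}(s+t,\omega)=W^{Q}(s,\omega)+W^{Q}(t,\theta_s\omega),
\]
arriving there by the chain
\[
\int_s^{s+t}\langle Q(r),\,\mathrm dW(r,\omega)\rangle
=\int_0^{t}\langle Q(r+s),\,\mathrm dW(r,\theta_s\omega)\rangle
\stackrel{?}{=}W^{Q}(t,\theta_s\omega).
\]
The last equality, however, is by definition $\int_0^{t}\langle Q(r),\,\mathrm dW(r,\theta_s\omega)\rangle$, so it requires $Q(r+s)=Q(r)$; for a genuinely time-varying $Q$ this is exactly the mismatch you identify. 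In other words, the paper proceeds as if $Q$ were autonomous at this step, whereas you correctly flag that in the non-autonomous case one must either enlarge the base flow (e.g.\ couple $(\theta_t)$ with the deterministic time shift so that the ``noise'' carries the $Q$-clock) or impose additional structure on $Q$. Your hesitation is therefore not a gap in your argument but an accurate diagnosis of a point the paper's proof glosses over; once the increment identity is secured by one of the devices you mention, the remainder of your argument and the paper's coincide.
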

	\begin{proof}
	Theorem $\ref{T3.1}$ states that for all $\omega \in {\Omega}_1$, $u(\cdot, u_0)$ exhibits continuous dependence on the initial data $u_0$. So a continuous modification of $u(t, u_0)$ exists, denoted by $\{\phi(t)\}_{t\geq 0}$, ensuring that $\phi(\cdot, \omega, \cdot): [0, \infty) \times l^2_{\rho} \rightarrow l^2_{\rho}$ is continuous for every $\omega$, and $\phi$ satisfies equation $\eqref{7}$ for almost every $\omega$.

	From the definition of $\left( \theta_{t} \right)_{t \in \mathbb{R}} $ we have the following property:
	\begin{equation}\label{11}
		\begin{aligned}
			W^Q(s + t, \omega) &= \int_{0}^{s + t}  \left\langle Q(r) , \mathrm{d}w(r, \omega)\right\rangle \\
			& = \int_{0}^{s}  \left\langle Q(r, \omega) , \mathrm{d}w(r, \omega)\right\rangle + \int_{s}^{s + t}  \left\langle Q(r) , \mathrm{d}w(r, \omega)\right\rangle \\
			& = \int_{0}^{s} \left\langle  Q(r, \omega) , \mathrm{d}w(r, \omega) \right\rangle + \int_{0}^{t} \left\langle  Q(r + s) , \mathrm{d}w(r,\theta_{s}\omega)\right\rangle \\
			& = W^Q(s, \omega) + W^Q(t, \theta_{s}\omega).
		\end{aligned}
	\end{equation}
	For simplicity, let $F(u) = -\nu Au - \lambda u - f(u) + g$ for each $u \in l^2_{\rho}$. Using equation $\eqref{7}$, we obtain
	\begin{equation}\label{12}
		\begin{aligned}
			\phi (t, \theta_{s} \omega ,\phi(s, \omega, u_0)) &= \phi(s, \omega, u_0) + \int_{0}^{t} {F\left( \phi (r, \theta_{s} \omega ,\phi(s, \omega, u_0)) \right) } \mathrm{d}r + W^Q(t, \theta_{s}\omega)\\
			&= u_0 + \int_{0}^{s} {F\left( \phi (r, \omega, u_0) \right) } \mathrm{d}r + W^Q(s, \omega) \\
			&\quad + \int_{0}^{t} {F\left( \phi (r, \theta_{s} \omega ,\phi(s, \omega, u_0)) \right) } \mathrm{d}r + W^Q(t, \theta_{s}\omega)\\
			&= u_0 + \int_{0}^{s} {F\left( \phi (r, \omega, u_0) \right) } \mathrm{d}r + W^Q(s, \omega) \\
			&\quad+ \int_{s}^{s + t} {F\left( \phi (r-s, \theta_{s} \omega ,\phi(s, \omega, u_0)) \right) } \mathrm{d}r + W^Q(t, \theta_{s}\omega).
		\end{aligned}
	\end{equation}
	For each $\omega \in \Omega$, we define
	\begin{displaymath}
		\Phi(r, \omega, u_0) =\left\{
		\begin{aligned}
			&\phi (r, \omega, u_0), \quad &\text{if}~~ 0\leq r \leq s,\\
			&\phi (r-s, \theta_{s} \omega ,\phi(s, \omega, u_0)), \quad &\text{if}~ ~s \leq r.
		\end{aligned}
		\right.
	\end{displaymath}
	Setting $r = t + s$, we have
	\begin{displaymath}
		\Phi(r, \omega, u_0) = \phi (t, \theta_{s} \omega ,\phi(s, \omega, u_0)) \quad \text{for all}~~ s,t \geq 0.
	\end{displaymath}
	From equations $\eqref{11}$ and $\eqref{12}$, it follows that
	\begin{equation}\label{13}
		\Phi(t + s, \omega, u_0) = u_0 + \int_{0}^{t + s} {F\left( \phi (r, \omega, u_0) \right) } \mathrm{d}r + W^Q(t + s, \omega).
	\end{equation}
	Given that both $\phi$ and $\Phi$ are solutions to equation $\eqref{7}$, and considering the uniqueness of this solution, it follows that
	\begin{displaymath}
		\Phi(t + s, \omega, u_0) = \phi(t + s, \omega, u_0) \quad \text{for all}~~ s,t \geq 0 ~~\text{and}~~ \omega \in \Omega_1.
	\end{displaymath}
	Therefore, from $\eqref{13}$ we can deduce
	\begin{displaymath}
		\phi(t + s, \omega, u_0) = \phi(t, \theta_{s} \omega,\phi(s, \omega, u_0)) \quad \text{for all}~~ s,t \geq 0.
	\end{displaymath}
	\end{proof}

	To perform numerical simulations of infinite-dimensional SLDSs, we need a finite-dimensional approximation. For every integer $n \geq 1$, let $\mathbb{R}^{2n + 1}$ be an $(2n+1)$-dimensional Euclidean space. Then the approximate finite dimensional system can be written as a $(2n + 1)$-dimensional ordinary differential equation as follows:
	\begin{equation}\label{14}
		\begin{aligned}
			\frac{\mathrm{d} u_{-n}(t)}{\mathrm{d} t}
			&= \nu \left( u_{n} - 2u_{-n} + u_{-n + 1} \right) - \lambda u_{-n} - f\left( u_{-n} \right) + g_{-n} + q_{-n}(t) \frac{\mathrm{d} w_{-n}(t)}{\mathrm{d} t},\\
			\frac{\mathrm{d} u_{-n + 1}(t)}{\mathrm{d} t}
			&= \nu \left( u_{-n} - 2u_{-n + 1} + u_{-n + 2} \right) - \lambda u_{-n + 1} - f\left( u_{-n + 1} \right) + g_{-n + 1} + q_{-n + 1}(t) \frac{\mathrm{d} w_{-n + 1}(t)}{\mathrm{d} t},\\
			\vdots\\
			\frac{\mathrm{d} u_{n - 1}(t)}{\mathrm{d} t}
			&= \nu \left( u_{n - 2} - 2u_{n - 1} + u_{n} \right) - \lambda u_{n - 1} - f\left( u_{n - 1} \right) + g_{n - 1} + q_{n - 1}(t) \frac{\mathrm{d} w_{n - 1}(t)}{\mathrm{d} t},\\
			\frac{\mathrm{d} u_{n}(t)}{\mathrm{d} t}
			&= \nu \left( u_{n - 1} - 2u_{n} + u_{-n} \right) - \lambda u_{n} - f\left( u_{n} \right) + g_{n} + q_{n}(t) \frac{\mathrm{d} w_{n}(t)}{\mathrm{d} t}\\
		\end{aligned}
	\end{equation}
	with the initial date
	\begin{equation}
		\begin{aligned}
			\left( u_{-n}, \cdots, u_{n} \right) (0) = \left( u_{0,-n}, \cdots, u_{0,n} \right) \in \mathbb{R}^{2n + 1}.
		\end{aligned}\notag
	\end{equation}

	According to equation (1.3), define the matrix $\tilde{A}$ as a $(2n + 1) \times (2n + 1)$ tridiagonal matrix with the following structure:
	\begin{equation*}
		\tilde{A} = 
		\begin{pmatrix}
			2 & -1 & 0 & \cdots & 0 & -1 \\
			-1 & 2 & -1 & \cdots & 0 & 0 \\
			0 & -1 & 2 & \cdots & 0 & 0 \\
			\vdots & \vdots & \vdots & \ddots & \vdots & \vdots \\
			0 & 0 & 0 & \cdots & 2 & -1 \\
			-1 & 0 & 0 & \cdots & -1 & 2
		\end{pmatrix}_{(2n + 1)\times (2n + 1)},
	\end{equation*}
	where the main diagonal entries are $2$, the first super- and sub-diagonal entries are $-1$, and the upper right and lower left corners also contain $-1$. All other entries are $0$.
	
	Additionally, define the matrix $\tilde{B}$ as a $(2n + 1) \times (2n + 1)$ matrix given by:
	\begin{equation*}
		\tilde{B} = 
		\begin{pmatrix}
			-1 & 1 & 0 & \cdots & 0 & 0 \\
			0 & -1 & 1 & \cdots & 0 & 0 \\
			0 & 0 & -1 & \cdots & 0 & 0 \\
			\vdots & \vdots & \vdots & \ddots & \vdots & \vdots \\
			0 & 0 & 0 & \cdots & -1 & 1 \\
			1 & 0 & 0 & \cdots & 0 & -1
		\end{pmatrix}_{(2n + 1)\times (2n + 1)},
	\end{equation*}
	where the main diagonal entries are $-1$, the first super-diagonal entries are $1$, and the lower-left corner also contains $1$. All other entries are $0$. It follows that $\tilde{A} = \tilde{B}\tilde{B}^{T} = \tilde{B}^{T}\tilde{B}$, where $\tilde{B}^{T}$ denotes the transpose of $\tilde{B}$. 
	
	Then the above system of equations $\eqref{14}$ is equivalent to the following equation:
	\begin{equation}\label{15}
		\begin{aligned}
			\frac{\mathrm{d} \tilde{u}(t)}{\mathrm{d} t}
			= F(\tilde{u}) + \tilde{Q}(t) \frac{\mathrm{d} \tilde{W}(t)}{\mathrm{d} t}, \quad t \in [0,T]
		\end{aligned}
	\end{equation}
	with initial condition
	\begin{equation}\label{16}
		\begin{aligned}
			\tilde{u}(0) = \tilde{u}_{0} = \left( u_{i}(0) \right)_{|i| < n} \in \mathbb{R}^{2n + 1},
		\end{aligned}
	\end{equation}
	where $\tilde{F}(\tilde{u}) := \nu \tilde{A} \tilde{u}  - \lambda \tilde{u} - f\left( \tilde{u} \right) + \tilde{g}$, vector functions $\tilde{u} = \left( u_{i} \right)_{\left| i \right| \leq n } $, $ f\left( \tilde{u} \right)  = \left( f\left( u_{i} \right) \right)_{\left| i \right| \leq n } $, $\tilde{g} = \left( g_{i} \right)_{\left| i \right| \leq n }$, $\tilde{W} = \left(w_{i} \right)_{\left| i \right| \leq n } $ and $\tilde{Q}(t)$ is the $(2n + 1) \times (2n + 1)$ function matrix
	\begin{equation*}
		\begin{array}{c}
			\tilde{Q}(t)
		\end{array}
		=
		\left(
		\begin{array}{ccccccc}
			q_{-n}(t) & 0 & 0 & 0 & \cdots & 0 & 0\\
			0 & q_{-n+1}(t) & 0 & 0 & \cdots & 0 & 0 \\
			0 & 0 & q_{-n+2}(t) & 0 & \cdots & 0 & 0 \\
			\vdots & \vdots & \vdots & \vdots & \vdots & \vdots & \vdots\\
			0 & 0 & 0 & 0 & \cdots & q_{n-1}(t) & 0 \\
			0 & 0 & 0 & 0 & \cdots & 0 & q_{n}(t) \\
		\end{array}
		\right)_{(2n + 1)\times (2n + 1) } \hspace{-5em}.
	\end{equation*}
	Obviously, equation $\eqref{15}$ with initial condition $\eqref{16}$ is wellposed in $\mathbb{R}^{2n + 1}$, that is, for every $\tilde{u}_{0} \in \mathbb{R}^{2n+1}$, there exists a unique solution $\tilde{u}(t) \in L^{2} \left(\Omega, C \left( \left[ 0, T \right], \mathbb{R}^{2n+1} \right) \right)$. Thus, we have the existence of a dynamical system $\left\lbrace S_n(t)\right\rbrace_{t \leq 0}$ which maps $\mathbb{R}^{2n+1}$ to $\mathbb{R}^{2n+1}$ defined for $\tilde{u}_{0} \in \mathbb{R}^{2n+1}$ by $S_n(t) \tilde{u}_{0} = \tilde{u}(t)$, the solution of equation $\eqref{15}$ with initial condition $\eqref{16}$.
	\begin{theorem}\label{T3.3}
		Assume $f$ satisfies condition $(\mathrm{f}1)$ and condition $(\mathrm{f}2)$, then the solution $\tilde{u}(t)$ of the (2n+1)-dimensional equation $\eqref{15}$ with initial condition $\eqref{16}$ converges on the mean square to the solution $u(t)$ of the infinite dimensional equation $\eqref{5}$ with initial condition $\eqref{6}$ as $n \to \infty$, that is,
		\begin{equation}\label{17}
			\lim\limits_{n \to \infty} \mathbb{E} \left( \Vert u(t) - \tilde{u}(t) \Vert_{\rho} \right)^{2} \to 0.
		\end{equation}
	\end{theorem}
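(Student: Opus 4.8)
The strategy is to realise the $(2n{+}1)$-dimensional solution $\tilde u(t)$ of $\eqref{15}$ as an element $\hat u_n(t)\in l^2_\rho$, obtained by setting the coordinates $|i|>n$ equal to zero, and to estimate $\sup_{0\le t\le T}\mathbb E\,\|u(t)-\hat u_n(t)\|_\rho^2$, which in particular yields $\eqref{17}$. Let $P_n$ denote the orthogonal projection of $l^2_\rho$ onto $\mathrm{span}\{e^{i}:|i|\le n\}$. Since $\eqref{15}$ is driven by precisely the scalar Wiener processes $w_i$, $|i|\le n$, that also appear in $\eqref{5}$, the embedded truncation satisfies $\hat u_n(t)=P_n u_0+\int_0^t \widehat F_n(\hat u_n(s))\,\mathrm{d}s+P_nW^Q(t)$, where $\widehat F_n$ is the zero-padded drift of $\eqref{15}$; crucially, $\widehat F_n(v)$ differs from $F(v)=-\nu Av-\lambda v-f(v)+g$, for $v$ supported in $\{|i|\le n\}$, only through the forcing terms $g_i$ with $|i|>n$ and finitely many periodic wrap-around corrections of size $\lesssim|v_{\pm n}|$ in the coordinates with $|i|$ near $n$. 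Consequently the error $w_n(t):=u(t)-\hat u_n(t)$ solves
\begin{displaymath}
\mathrm{d} w_n(t)=\bigl(F(u(t))-\widehat F_n(\hat u_n(t))\bigr)\,\mathrm{d}t+(I-P_n)Q(t)\,\mathrm{d}W(t),\qquad w_n(0)=(I-P_n)u_0 .
\end{displaymath}

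The first step is to apply Itô's formula to $\|w_n(t)\|_\rho^2$ and take expectations; the martingale term drops out and one is left with
\begin{displaymath}
\frac{\mathrm{d}}{\mathrm{d}t}\,\mathbb E\,\|w_n(t)\|_\rho^2=2\,\mathbb E\,\bigl\langle w_n(t),\,F(u(t))-\widehat F_n(\hat u_n(t))\bigr\rangle_\rho+\sum_{|i|>n}\bigl(\rho_i q_i(t)\bigr)^2 .
\end{displaymath}
Writing $F(u)-\widehat F_n(\hat u_n)=\bigl(F(u)-F(\hat u_n)\bigr)+\bigl(F(\hat u_n)-\widehat F_n(\hat u_n)\bigr)$, the first bracket gives $\langle w_n,F(u)-F(\hat u_n)\rangle_\rho=-\nu\langle Aw_n,w_n\rangle_\rho-\lambda\|w_n\|_\rho^2-\langle f(u)-f(\hat u_n),w_n\rangle_\rho\le 0$, by the non-negativity and self-adjointness of $A$ and the componentwise monotonicity $(\mathrm{f}1)$ (note $f(0)=0$ in the padded coordinates, so every term of the last inner product is non-negative). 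For the second bracket I would substitute $u^{(n)}_{\pm n}=u_{\pm n}-w_{n,\pm n}$ to trade the truncation's boundary values for those of the true solution, and use $|\rho_i w_{n,i}|\le\|w_n\|_\rho$; this gives $\langle w_n,F(\hat u_n)-\widehat F_n(\hat u_n)\rangle_\rho\le\tfrac{C}{2}\|w_n\|_\rho^2+C\sum_{|i|\ge n}\bigl(\rho_i u_i\bigr)^2+C\sum_{|i|>n}\bigl(\rho_i g_i\bigr)^2$.

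Combining, $\frac{\mathrm{d}}{\mathrm{d}t}\mathbb E\|w_n(t)\|_\rho^2\le C\,\mathbb E\|w_n(t)\|_\rho^2+r_n(t)$ with $r_n(t):=C\,\mathbb E\sum_{|i|\ge n}(\rho_i u_i(t))^2+C\sum_{|i|>n}(\rho_i g_i)^2+\sum_{|i|>n}(\rho_i q_i(t))^2$, and since $w_n(0)=(I-P_n)u_0$, Gronwall's lemma gives $\sup_{t\le T}\mathbb E\|w_n(t)\|_\rho^2\le C(T)\bigl(\|(I-P_n)u_0\|_\rho^2+\int_0^T r_n(s)\,\mathrm{d}s\bigr)$. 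It remains to let $n\to\infty$: the term $\|(I-P_n)u_0\|_\rho^2\to 0$ because $u_0\in l^2_\rho$; the series $\sum_{|i|>n}(\rho_i g_i)^2$ and $\sum_{|i|>n}(\rho_i q_i(t))^2$ are tails of convergent sums since $g,q(t)\in l^2_\rho$; and $\int_0^T\mathbb E\sum_{|i|\ge n}(\rho_i u_i(t))^2\,\mathrm{d}t\to 0$ by dominated convergence, the dominating function $\mathbb E\|u(t)\|_\rho^2$ being integrable on $[0,T]$ because $\mathbb E\sup_{t\le T}\|u(t)\|_\rho^2<\infty$ by Theorem \ref{T3.1} (ultimately a consequence of the a priori estimate $\eqref{8}$ together with the finiteness of all moments of $\sup_{t\le T}\|W^Q(t)\|_\rho$). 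Hence $\sup_{t\le T}\mathbb E\|u(t)-\hat u_n(t)\|_\rho^2\to 0$, which is stronger than $\eqref{17}$.

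The step I expect to require the most care is the handling of the periodic wrap-around terms of $\tilde A$: although they occupy only $O(1)$ coordinates with index of magnitude $\approx n$, a naive bound would force the forcing to be controlled by the boundary values of the $n$-th truncation, for which no smallness is available a priori; the remedy is the substitution $u^{(n)}_{\pm n}=u_{\pm n}-w_{n,\pm n}$ above, which shifts the burden onto the tail of the infinite-dimensional solution, whose $L^2(\Omega;l^2_\rho)$-summability is known. A secondary technical point is the rigorous application of Itô's formula to $\|w_n\|_\rho^2$ in the Hilbert space $l^2_\rho$, which rests on $A$ being a bounded operator there and on the a priori moment bounds; the nonlinearity $f$, by contrast, causes no real difficulty, since its contribution is absorbed through monotonicity rather than estimated by a random, solution-dependent Lipschitz constant.
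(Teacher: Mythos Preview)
Your approach is correct and is genuinely different from the paper's. The paper does \emph{not} compare $u$ and $\tilde u$ directly; instead it asserts, without justification, that \eqref{17} ``is equivalent to'' the tail estimate
\[
\sum_{|i|>K(\epsilon)}\mathbb{E}\bigl(\rho_i u_i(t)\bigr)^2<\epsilon\quad\text{for all }t\in[0,T],
\]
and then proves only the latter via a smooth cutoff $\theta(|i|/k)$ and an energy inequality. The tail estimate certainly controls the contribution of the coordinates $|i|>n$ to $\|u-\tilde u\|_\rho^2$, but the paper gives no argument for the inner block $\sum_{|i|\le n}\rho_i^2\,(u_i-\tilde u_i)^2$: the finite system \eqref{15} carries periodic wrap-around couplings through $\tilde A$, so its trajectories differ from those of $u$ on the shared indices, and smallness of the tail of $u$ alone does not close that gap. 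Your energy estimate on $w_n=u-\hat u_n$ is precisely what supplies this missing comparison.

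What each route buys: the paper's cutoff argument is self-contained for the tail bound and avoids any discussion of the boundary mismatch, at the cost of leaving the equivalence claim unproved. Your It\^o--Gronwall argument is the natural one; it absorbs the monotone part $\langle w_n,F(u)-F(\hat u_n)\rangle_\rho\le 0$ for free, isolates the genuine error sources (the $g$-tail, the $Q$-tail, and the $O(1)$ wrap-around entries of $\tilde A$), and then reduces the wrap-around contribution to the tail of $u$ via the substitution $\hat u_{n,\pm n}=u_{\pm n}-w_{n,\pm n}$. The only places to be careful are the ones you already flag: (i) the wrap-around terms put weights $\rho_n$ against values at index $-n$, so you are implicitly using that $\rho_{n}/\rho_{-n}$ and $\rho_{n+1}/\rho_n$ stay bounded---the same standing assumption under which the paper treats $A$ as bounded and non-negative on $l^2_\rho$; and (ii) the passage $\int_0^T\mathbb{E}\sum_{|i|\ge n}(\rho_i u_i)^2\,\mathrm{d}s\to 0$ by dominated convergence, for which the domination $\mathbb{E}\|u(s)\|_\rho^2\le \mathbb{E}\sup_{t\le T}\|u(t)\|_\rho^2<\infty$ follows from the a~priori bound in Theorem~\ref{T3.1}. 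With those in hand your argument yields $\sup_{t\le T}\mathbb{E}\|u(t)-\hat u_n(t)\|_\rho^2\to 0$, which is stronger than \eqref{17}.
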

	\begin{proof}
		To prove that $\eqref{17}$ holds is equivalent to prove that for every $\epsilon > 0$, there exist $K(\epsilon)$ such that the solution $u(t)$ of the infinite dimensional equation $\eqref{1}$ satisfies
		\begin{equation}\label{18}
			\sum_{|i| > K(\epsilon)} \mathbb{E} \left(  {\rho}_i u_i(t) \right) ^2 < \epsilon \quad \text{for all}~~ t \in [0,T],
		\end{equation}
		where $K(\epsilon)$ depend on $\nu$, $\lambda$, $g$ and $Q$.
		
		Choose a smooth function $\theta(t)$ satisfying $ 0 \leq \theta(t) \leq 1 $ for all $ t \in \mathbb{R}_+ $, with
		\begin{displaymath}
			\theta(t) =\left\{
			\begin{aligned}
				&0, \quad &\text{if}~~ 0 \leq t \leq 1,\\
				&1, \quad &\text{if}~~ t \geq 2.
			\end{aligned}
			\right.
		\end{displaymath}
		Additionally, let there exist a positive constant $ C_{\theta} $ such that $ |\theta'(t)| \leq C_{\theta} $ for all $ t \in \mathbb{R}_+ $.
		
		Let $ k $ be a fixed integer to be specified later, and by taking the inner product of the differential form of equation $\eqref{1}$ with $ \theta \left(\frac{|i|}{k}\right) {\rho}_i^2 u_i(t) $ in $ l^2_{\rho} $, we obtain
		\begin{equation}\label{19}
			\begin{aligned}
				\frac{1}{2} \frac{\mathrm{d}}{\mathrm{d}t} \sum_{i} \theta \left(\frac{|i|}{k}\right) |{\rho}_i u_i(t)|^2 
				&= - \nu \left\langle Bu(t), B\theta \left(\frac{|i|}{k}\right) u(t) \right\rangle_{\rho} - \lambda\sum_{i} \theta \left(\frac{|i|}{k}\right) |{\rho}_i u_i(t)|^2 - \sum_{i} \theta \left(\frac{|i|}{k}\right) f(u_i(t)){\rho}_i^2 u_i(t)\\
				& + \sum_{i}\theta \left(\frac{|i|}{k}\right) g_i {\rho}_i^2 u_i(t) + \sum_{i}\theta \left(\frac{|i|}{k}\right) q_i(t) {\rho}_i^2 u_i(t) \frac{\mathrm{d}w_i}{\mathrm{d}t}.
			\end{aligned}
		\end{equation}
		Now we estimate the right-hand side of $\eqref{19}$. First we have
		\begin{align*}
				- \left\langle Bu(t), B\theta \left(\frac{|i|}{k}\right) u(t) \right\rangle_{\rho} &= - \rho_i^2 \sum_{i} (u_{i+1}(t) - u_{i}(t))(\theta \left(\frac{|i|}{k}\right)u_{i+1}(t) - \theta \left(\frac{|i|}{k}\right)u_{i}(t)) \\
				&= -\rho_i^2 \sum_{i} \left( \theta\left(\frac{|i + 1|}{k}\right) - \theta\left(\frac{|i|}{k}\right) \right) (u_{i+1}(t) - u_{i}(t))u_{i+1}(t)\\
				&\quad  - \rho_i^2\sum_{i} \theta\left(\frac{|i|}{k}\right)(u_{i+1}(t) - u_{i}(t))^2 \\
				&\leq \rho_i^2 \bigg| \sum_{i} \left( \theta\left(\frac{|i + 1|}{k}\right) - \theta\left(\frac{|i|}{k}\right) \right) (u_{i+1}(t) - u_{i}(t))u_{i+1}(t) \bigg| \\
				&\leq \rho_i^2 \sum_{i} \frac{\left| \theta'(\xi_{i}) \right|}{k} |u_{i+1}(t) - u_{i}(t)| |u_{i+1}(t)| \\
				&\leq \frac{2C_{\theta}}{k} \Vert u(t) \Vert^2_{\rho}.
		\end{align*}
		From statement $(2)$ of Theorem $\ref*{T3.1}$, it is known that $\Vert u(t) \Vert^2_{\rho}$ is bounded on the finite interval $[0, T]$ for a.e. $\omega \in \Omega$. Here, assuming the upper bound is $M$, we have
		\begin{equation}\label{20}
			\begin{aligned}
				- \left\langle  Bu(t), B\theta \left(\frac{|i|}{k}\right) u(t) \right\rangle_{\rho}  
				\leq \frac{2MC_{\theta}}{k}, \quad \text{for all}~~ t \in [0,T].
			\end{aligned}
		\end{equation}
		On the other hand, we have
		\begin{equation}\label{21}
			\begin{aligned}
				\sum_{i} \theta\left(\frac{|i|}{k}\right) g_i \rho_i^2 u_i(t) &= \sum_{|i| \geq k} \theta\left(\frac{|i|}{k}\right) g_i \rho_i^2 u_i(t) \\
				&\leq \frac{1}{2} \lambda \sum_{|i| \geq k} \theta^2\left(\frac{|i|}{k}\right) |\rho_i u_i(t)|^2 + \frac{1}{2\lambda} \sum_{|i| \geq k} |\rho_i g_i|^2 \\
				&\leq \frac{1}{2} \lambda \sum_{i} \theta\left(\frac{|i|}{k}\right) |\rho_i u_i(t)|^2 + \frac{1}{2\lambda} \sum_{|i| \geq k} |\rho_i g_i|^2.
			\end{aligned}
		\end{equation}
		Therefore, based on $\eqref{19}$, $\eqref{20}$, $\eqref{21}$, and $(\mathrm{f}1)$, for all $t \in [0,T]$
		\begin{equation}
			\begin{aligned}
				\frac{\mathrm{d}}{\mathrm{d}t} \sum_{i}\theta \left(\frac{|i|}{k}\right) |\rho_i u_i(t)|^2   \leq - \lambda \sum_i \theta\left(\frac{|i|}{k}\right) |\rho_i u_i(t)|^2 + \frac{4MC_{\theta}}{k} + \frac{1}{\lambda} \sum_{|i| \geq k} |\rho_i g_i|^2 + 2 \sum_{i}\theta \left(\frac{|i|}{k}\right) q_i(t) \rho_i^2 u_i(t) \frac{\mathrm{d}w_i}{\mathrm{d}t}.
			\end{aligned}\notag
		\end{equation}
		Since $g \in l^2_{\rho}$ and $MC_{\theta}$ is independent of $k$, we can show that, for any $\epsilon > 0$ given, there exists $K_1(\epsilon)$ such that when $k \geq K_1(\epsilon)$,
		\begin{equation}
			\begin{aligned}
				\frac{4MC_{\theta}}{k} + \frac{1}{\lambda} \sum_{|i| \geq k} |\rho_i g_i|^2 \leq \frac{\lambda}{2}\epsilon.
			\end{aligned}\notag
		\end{equation}
		Then when $k \geq K_1(\epsilon)$, we obtain
		\begin{equation}\label{22}
			\begin{aligned}
				\frac{\mathrm{d}}{\mathrm{d}t} \sum_{i}\theta \left(\frac{|i|}{k}\right) |\rho_i u_i(t)|^2   \leq - \lambda \sum_i \theta\left(\frac{|i|}{k}\right) |\rho_i u_i(t)|^2 +  \frac{\lambda}{2}\epsilon + 2 \sum_{i}\theta \left(\frac{|i|}{k}\right) q_i(t) \rho_i^2 u_i(t) \frac{\mathrm{d}w_i}{\mathrm{d}t}.
			\end{aligned}
		\end{equation}
		By integrating both sides of $\eqref{22}$ over $[0, t]$ and taking the expectation, we obtain
		\begin{equation}
			\begin{aligned}
				\sum_{i}\theta \left(\frac{|i|}{k}\right)\mathbb{E} |\rho_i u_i(t)|^2 
				&\leq e^{-\lambda t} \sum_{i}\theta \left(\frac{|i|}{k}\right) \mathbb{E}|\rho_i u_i(0)|^2 + \frac{\epsilon}{2} \quad \text{for all}~~ t \in [0,T].
			\end{aligned}\notag
		\end{equation}
		Since $u = \left( u_{i} \right)_{i \in \mathbb{Z}} \in l^2_{\rho}$, we can show that, for any $\epsilon > 0$ given, there exists $K_2(\epsilon)$ such that when $k \geq K_2(\epsilon)$,
		\begin{equation}
			\begin{aligned}
				\sum_{|i| \geq k} \mathbb{E}|\rho_i u_i(0)|^2 \leq \frac{1}{2} \epsilon.
			\end{aligned}\notag
		\end{equation}
		Finally, let $k \geq K(\epsilon) := \max \left\lbrace K_1(\epsilon), K_2(\epsilon) \right\rbrace$, we have
		\begin{equation}\label{23}
			\begin{aligned}
				\sum_{|i| \geq k}\mathbb{E} |\rho_i u_i(t)|^2 
				&\leq \epsilon \quad \text{for all}~~ t \in [0,T],
			\end{aligned}
		\end{equation}
		which is equivalent to the theorem being true.
	\end{proof}

\section{Onsager-Machlup functional for SLDSs}

For simplicity, let us define $\tilde{A} = \nu A + \lambda I$, $F(u) := - f(u) + g$ for each $u \in l^2_{\rho}$, and without confusion, we still use $A$ to represent $\tilde{A}$. Then equation $\eqref{5}$ with initial condition $\eqref{6}$ can be written in the following form:
\begin{equation}\label{24}
	\begin{aligned}
		\frac{\mathrm{d} u(t)}{\mathrm{d} t}
		= - Au(t)  + F(u) + Q(t) \frac{\mathrm{d} W(t)}{\mathrm{d} t}, \quad t \in [0,T]
	\end{aligned}
\end{equation}
with initial condition
\begin{equation}\label{25}
	\begin{aligned}
		u(0) = \left( u_{i}(0) \right)_{i \in \mathbb{Z}} \in l^2_{\rho}.
	\end{aligned}
\end{equation}
Note that the operator $ A $ is a positive self-adjoint operator (i.e. $\left( Au, u\right) > 0$ and $(Au,v)=(u,Av)$ for all $u,v \in l^2_{\rho}$), and generates a $ C_0 $-semigroup $\left\lbrace \exp(tA),~ t \geq 0\right\rbrace $, which is strongly continuous. Under a set of orthogonal bases $\left\lbrace e_i \right\rbrace_{i \in \mathbb{Z}} $ in $l^2_{\rho}$, we assume that the eigenvalues of $A$ are $\left\lbrace \alpha_i > \lambda> 0, i \in \mathbb{Z}\right\rbrace $. Furthermore, under conditions (f1) and (f2), $F \in C^2([0,1], l^2_{\rho})$ is Lipschitz continuous and 
\begin{displaymath}
	\sup\limits_{t \in [0,T]} \Vert F(u)\Vert_{\rho} \leq K(1 + \Vert u\Vert_{\rho}^{2p})\quad \text{for all}~ u \in l^2_{\rho},
\end{displaymath}
where $p$ is defined in (f2). Now, we will calculate the Onsager-Machlup functional of equation $\eqref{24}$ with initial condition $\eqref{25}$. First, we present some necessary conditions.
	\begin{itemize}
		\item[(H1)] $ Q(t)$ is a non-degenerate linear operator. In other words, $Q(t)$ is invertible, i.e. the diagonal elements $ q_i(t) $ are all invertible, for any $ t \in [0, 1] $ and $\left( q_i(t) \right)_{i \in \mathbb{Z}} \in l^2_{\rho}, q_i(t) \in C^{2}(\mathbb{R})$. Furthermore, we denote the inverse of $Q(t)$ as $Q(t)^{-1}$, and let $Q(t)$ and $Q(t)^{-1}$ be bounded for all $t \in [0,1]$.
		\item[(H2)] For any $ t \in [0, 1] $, $ F(u(t)) \in \text{Im}(Q(t)) $ almost surely, and satisfies:
		\begin{displaymath}
			\mathbb{E}\left[\int_{0}^{1} \exp\left\lbrace \frac{1}{2} \Vert Q(t)^{-1} F(u(t))\Vert^2_{\rho} \right\rbrace  dt\right] < \infty.
		\end{displaymath}
	\end{itemize}

\begin{theorem}	\label{T4.1}
	When $f$ satisfies condition $(\mathrm{f}1)$ and condition $(\mathrm{f}2)$, assume that $u(t)$ is a solution of equation $\eqref{24}$, reference path $\varphi$ is a function such that $\varphi(t) - u(0)$ belongs to Cameron-Martin $H^1$, and conditions $(H1)$,$(H2)$ hold. Then, the Onsager-Machlup functional of $u(t)$ exists and has the form
	\begin{equation}\label{26}
		\int_{0}^{1} OM(\varphi, \dot{\varphi}) \,{\rm d}t = \int_{0}^{1} \Vert Q(t)^{-1} \left({\dot{\varphi}(t) + A\varphi(t) - F(\varphi(t))} \right)\Vert^2_{\rho} \,{\rm d}t + \int_{0}^{1} { \text{Tr}_{\rho} \left(\mathcal{D}_{\varphi(t)} F \right)} \,{\rm d}t,
	\end{equation}
	where $\text{Tr}_{\rho}$ represents a weighted trace and $\mathcal{D}_{\varphi(t)} F$ represents the Fréchet derivative of $F$ at $\varphi(t)$.
\end{theorem}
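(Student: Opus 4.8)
The plan is to establish the small-ball ratio limit
\[
\frac{\mathbb P\big(\Vert u-\varphi\Vert_{L^2_\rho}\le\epsilon\big)}{\mathbb P\big(\Vert W^Q\Vert_{L^2_\rho}\le\epsilon\big)}\ \longrightarrow\ \exp\Big\{-\tfrac12\int_0^1 OM(\varphi,\dot\varphi)\,\mathrm{d}t\Big\}\qquad(\epsilon\to0),
\]
with $W^Q$ the centered Gaussian reference process of Lemma \ref{lemma 2.15}, so that the denominator is the constant $C(\epsilon)$ of the Definition in Section 2 and does not depend on $\varphi$. Equivalently, after writing $\mathbb P(\Vert u-\varphi\Vert_{L^2_\rho}\le\epsilon)=\mathbb E[\mathbf 1_{\{\Vert W^Q\Vert_{L^2_\rho}\le\epsilon\}}D_\epsilon]$ for an explicit density $D_\epsilon$, one must compute $\lim_{\epsilon\to0}\mathbb E(D_\epsilon\mid \Vert W^Q\Vert_{L^2_\rho}\le\epsilon)$.

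First I would produce $D_\epsilon$ by infinite-dimensional Girsanov and Cameron--Martin transformations. Working in the joint eigenbasis $\{e_i\}$ of $A$ and $Q(t)$ and using (H1) to give meaning to $Q(t)^{-1}$, one change of measure removes the nonlinear drift $F$, replacing $u$ by the Ornstein--Uhlenbeck process solving $\dot v=-Av+Q\dot W$, and a Cameron--Martin shift by the $H^1$ element $t\mapsto\varphi(t)-e^{-At}u(0)$ recenters it so that $\{\Vert u-\varphi\Vert_{L^2_\rho}\le\epsilon\}$ becomes $\{\Vert W^Q\Vert_{L^2_\rho}\le\epsilon\}$. The accumulated density $D_\epsilon$, whose exponential moments are controlled by (H2), is the exponential of a deterministic quadratic expression, a sum of stochastic integrals linear in $\mathrm{d}W$, and --- after applying It\^o's formula to the term $\langle Q^{-1}F(\cdot),\mathrm{d}W\rangle_\rho$ --- an It\^o-correction term. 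Collecting the deterministic quadratic contributions (the Girsanov ``energy'' together with the Cameron--Martin factor) reconstitutes the first term $\int_0^1\Vert Q(t)^{-1}(\dot\varphi(t)+A\varphi(t)-F(\varphi(t)))\Vert_\rho^2\,\mathrm{d}t$ of \eqref{26}, which is finite because $\dot\varphi\in L^2([0,1],l^2_\rho)$, $A$ is bounded on $l^2_\rho$, $Q^{-1}$ is bounded, and $F$ grows at most polynomially.

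Next I would Taylor-expand $F$ about $\varphi$, writing $F(\varphi+W^Q+\cdots)=F(\varphi)+\mathcal D_{\varphi}F(W^Q)+R$ with $\Vert R\Vert_\rho=O(\Vert W^Q\Vert_\rho^2)$, and then invoke Lemma \ref{lemma 2.7} to reduce to showing that each non-constant summand of $\log D_\epsilon$ has conditional exponential moment tending to $1$. The stochastic integrals linear in $\mathrm{d}W$ (such as $\int_0^1\langle Q^{-1}F(\varphi),\mathrm{d}W\rangle_\rho$ and $\int_0^1\langle Q^{-1}(\dot\varphi+A\varphi),\mathrm{d}W\rangle_\rho$) have integrands in $L^2([0,1],l^2_\rho)$ and are dispatched by Lemma \ref{lemma 2.15}. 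For the term quadratic in the noise, $\int_0^1\langle Q(t)^{-1}\mathcal D_{\varphi(t)}F(W^Q(t)),\mathrm{d}W(t)\rangle_\rho$, I would expand $W^Q$ in its Karhunen--Lo\`eve series (Theorem \ref{theorem 2.6}), rewrite the It\^o integral as a Skorohod integral, and split off its diagonal: the Wick-ordered bilinear form $\sum_{m,n}T_{mn}X_mX_n$ in the resulting independent $\mathcal N(0,1)$ variables has conditional exponential moment $\to1$ by Lemma \ref{lmma 2.9} once $T$ is checked to be trace class, while the diagonal correction is deterministic and, because $\mathcal D_{\varphi}F$ is the diagonal operator with entries $-f'(\varphi_i)$ and $(\rho_i)\in l^2$, sums with the $\rho$-weights to exactly $-\tfrac12\int_0^1\text{Tr}_\rho(\mathcal D_{\varphi(t)}F)\,\mathrm{d}t$, the second term of \eqref{26}. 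Finally the remainder $\int_0^1\langle Q^{-1}R,\mathrm{d}W\rangle_\rho$, which is of higher order in $W^Q$, I would control by bounding $\mathbb P(|\int_0^1\langle Q^{-1}R,\mathrm{d}W\rangle_\rho|>\delta,\ \Vert W^Q\Vert_{L^2_\rho}\le\epsilon)$ by an exponential martingale inequality and dividing by the (stretched-exponentially small) small-ball lower bound of Corollary \ref{corollary 2.11}, obtained from Lemma \ref{lemma 2.13}; this ratio tends to $0$ for every $\delta>0$, so the remainder contributes nothing.

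Assembling the surviving deterministic pieces yields the displayed limit, hence \eqref{26}. The \emph{main obstacle} is the quadratic-noise term: carrying out the Karhunen--Lo\`eve/Skorohod bookkeeping in infinite dimensions with a time-dependent $Q(t)$ and the semigroup $e^{-At}$ present, verifying the trace-class hypothesis of Lemma \ref{lmma 2.9}, and matching the diagonal It\^o/Skorohod correction to $\text{Tr}_\rho(\mathcal D_{\varphi}F)$ with the correct weighting --- this is precisely where the passage beyond the finite-dimensional and constant-coefficient settings is delicate. A secondary difficulty is calibrating the remainder estimate: since the lower bound on $\mathbb P(\Vert W^Q\Vert_{L^2_\rho}\le\epsilon)$ from Corollary \ref{corollary 2.11} decays almost exponentially fast, the exponential control on $\int_0^1\langle Q^{-1}R,\mathrm{d}W\rangle_\rho$ must be strong enough not to be overwhelmed by it.
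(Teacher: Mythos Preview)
Your proposal is correct and follows essentially the same route as the paper: a Girsanov change of measure (the paper packages your separate Girsanov and Cameron--Martin steps into a single density involving $Q^{-1}F(v)-h$), a decomposition into terms handled by Lemma~\ref{lemma 2.7}, Lemma~\ref{lemma 2.15} for the linear stochastic integrals, a Taylor expansion of $F$ around $\varphi$ with the quadratic-in-noise term treated via the Karhunen--Lo\`eve expansion, Skorohod integral, and Lemma~\ref{lmma 2.9} (the diagonal yielding $\text{Tr}_\rho(\mathcal D_{\varphi}F)$), and the remainder controlled by the exponential martingale inequality against the small-ball lower bound from Corollary~\ref{corollary 2.11}. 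You have also correctly identified the main technical obstacle (the quadratic term's trace-class verification and weighted-trace identification) and the secondary one (calibrating the remainder against the stretched-exponential small-ball decay).
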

\begin{proof}
	In order to apply Girsanov's transform, fix a function $ h \in L^2([0, 1]; l^2_{\rho}) $. Let $ \varphi(t) $ be the solution of the infinite dimensional equation
	\begin{equation}\label{27}
		\begin{aligned}
			\mathrm{d}\varphi(t) &= - A\varphi(t)  \mathrm{d}t + Q(t) h(t)  \mathrm{d}t, \quad t \in [0, 1]
		\end{aligned}
	\end{equation}
	with initial condition
	\begin{equation}\label{28}
		\begin{aligned}
			\varphi(0) &= u(0).
		\end{aligned}
	\end{equation}
	We will compute the Onsager–Machlup functional on $ L^2([0, 1]; l^2_{\rho}) $ at points of the form $ \varphi(t) $. Let $v_t$ be the solution for following stochastic integral equation:
	\begin{displaymath}
		v(t) = \varphi(t) + \int_{0}^{t} \left\langle {e^{-As} Q(s)}, \,{\rm d}\tilde{W}(s)\right\rangle,
	\end{displaymath}
	where $\varphi(t) \in H^1$. For convenience, denote $W^Q(t) := \int_{0}^{t} \left\langle {e^{-As} Q(s)}, \,{\rm d}\tilde{W}(s)\right\rangle$. Under the conditions of Theorem $\eqref{T4.1}$, we can verify that the Novikov condition $\mathbb{E}^{\mathbb{P}}\left( \mathrm{exp}\left\{ \int_{0}^{1} {\Vert Q(t)^{-1} \left({\dot{\varphi}(t) - F(\varphi(t))} \right)\Vert_{\rho}} \,{\rm d}t \right\} \right) < \infty$ is clearly satisfied. Girsanov theorem implies that $\tilde{W}(t) = W(t) - \int_{0}^{t} {{Q(s)^{-1}}  {F \left(v(s) \right) } } \,{\rm d}s - \int_{0}^{t} { h(s)} \,{\rm d}s$ is an infinite-dimensional Brownian motion under new probability $\tilde{\mathbb{P}}$ (see \cite{8} Theorem 10.14), which defined by $\frac{\tilde{\mathbb{P}}}{\mathbb{P}} := \mathcal{R}$ with
	\begin{displaymath}
		\mathcal{R} := {\rm exp} \left\{ {\int_{0}^{T} \big\langle  {{\left({Q(t)^{-1}} F \left(v(t) \right) - h(t) \right)} }, \,{\rm d}W_t \big\rangle_{\rho} - \frac{1}{2}\int_{0}^{T} {\Vert{ {Q \left( t \right)^{-1}}  F \left(v(t) \right) - h(t) } \Vert^2_{\rho} } \,{\rm d}t} \right\}.
	\end{displaymath}
	So we have
	\begin{equation}\label{29}
		\begin{aligned}
			&\frac{\mathbb{P}\left(\Vert u(t) -\varphi(t) \Vert_{L^2_{\rho}} \leq \epsilon\right)}{\mathbb{P}\left(\Vert W^Q \Vert_{L^2_{\rho}} \leq \epsilon\right)} 
			 = \frac{\tilde{\mathbb{P}}\left(\Vert v(t) -\varphi(t)\Vert_{L^2_{\rho}} \leq \epsilon\right)}{\mathbb{P}\left(\Vert W^Q\Vert_{L^2_{\rho}} \leq \epsilon\right)} 
			= \frac{\mathbb{E} \left( \mathcal{R}\mathbb{I}_{\Vert W^Q\Vert_{L^2_{\rho}} \leq \epsilon} \right)}{\mathbb{P}\left(\Vert W^Q\Vert_{L^2_{\rho}} \leq \epsilon\right)} = \mathbb{E}\left( \mathcal{R} \big| \Vert W^Q \Vert_{L^2_{\rho}} \leq \epsilon \right)
			\\& = \mathbb{E} \bigg( {\rm exp} \left\{ {\int_{0}^{1} \big\langle  {{\left({Q(t)^{-1}} F \left(v(t) \right) - h(t) \right)} }, \,{\rm d}W_t \big\rangle_{\rho} - \frac{1}{2}\int_{0}^{1} {\Vert{ {Q \left( t \right)^{-1}}  F \left(v(t) \right) - h(t) } \Vert^2_{\rho} } \,{\rm d}t} \right\}  \big| \Vert W^Q\Vert_{L^2_{\rho}} \leq \epsilon  \bigg) 
			\\& =  \mathbb{E} \left( {\rm exp}\left\{ { \sum_{i = 1}^{4} B_i} \right\} \big| \Vert W^Q \Vert_{L^2_{\rho}} \leq \epsilon \right)  {\rm exp}\left\{ { -\frac{1}{2} \int_{0}^{1} {\Vert Q^{-1} \left({\dot{\varphi}(t) + A\varphi(t) - F(\varphi(t))} \right)\Vert^2_{\rho}} \,{\rm d}t} \right\},
		\end{aligned}
	\end{equation}
	where
	\begin{align*}
		B_1 &= \int_{0}^{1} \left\langle  {{Q(t)^{-1}}{F(v(t))}}, \,{\rm d}W(t) \right\rangle_{\rho},
		\\ B_2 &= - \int_{0}^{1} \left\langle  {h(t) }, \,{\rm d}W(t) \right\rangle_{\rho},
		\\ B_3 &= \frac{1}{2} \int_{0}^{1} {\Vert {Q(t)^{-1}} {F(\varphi(t))} \Vert^2_{\rho} } \,{\rm d}t
		- \frac{1}{2} \int_{0}^{1} {\Vert {Q(t)^{-1}} {F(v(t)) } \Vert^2_{\rho} }  \,{\rm d}t,
		\\ B_4 &= \int_{0}^{1} { \left\langle  {{Q(t)^{-1}} \left( F(v(t)) - F(\varphi(t))\right) , h(t)} \right\rangle_{\rho} } \,{\rm d}t.
	\end{align*}
	
	For the second term $B_2$, applying Lemma $\ref{lemma 2.15}$ to $ h \in L^2([0, 1]; l^2_{\rho}) $ and Lemma $\ref{lemma 2.7}$ we have
	\begin{equation}\label{30}
		\limsup\limits_{\epsilon \to 0} \mathbb{E}\left({\rm exp}\left\{ cB_2 \right\} \big|\Vert W^Q \Vert_{L^2_{\rho}} < \epsilon \right) = 1, \quad \text{for all}~  c \in \mathbb{R}.
	\end{equation}

	For the third term $B_3$, 
	\begin{align*}
		B_3 &= \frac{1}{2} \int_{0}^{1} {\Vert {Q(t)^{-1}} {F(\varphi(t))} \Vert^2_{\rho} } - {\Vert {Q(t)^{-1}} {F(v(t)) } \Vert^2_{\rho}}  \,{\rm d}t
		\\ &\leq \frac{1}{2} \int_{0}^{1}  {\Vert {Q(t)^{-1}}\Vert^2 \Vert \left(F(\varphi(t)) - F(v(t)) \right) \Vert^2_{\rho}} \,{\rm d}t\\
		&\quad +  \int_{0}^{1} {\Vert  {Q(t)^{-1}} \Vert^2  \Vert \left(F(\varphi(t)) - F(v(t)) \right) \Vert_{\rho} \Vert F(v(t)) \Vert_{\rho}} \,{\rm d}t.
	\end{align*}
	Since $F$ is Lipschitz continuous and ${Q(t)^{-1}}$ and $F$ are bounded, we have
		\begin{displaymath}
		\begin{aligned}
			B_3 \leq C \int_{0}^{1} {\Vert W^Q(t) \Vert_{\rho}} \,{\rm d}t.
		\end{aligned}
	\end{displaymath}
	Then
	\begin{equation}\label{31}
		\limsup\limits_{\epsilon \to 0} \mathbb{E}\left({\rm exp}\left\{ cB_3 \right\} \big|\Vert W^Q \Vert_{L^2_{\rho}} < \epsilon \right) = 1 \quad \text{for all}~  c \in \mathbb{R}.
	\end{equation}
	
	For the fourth term $B_4$, applying the confition that $F$ is Lipschitz continuous and ${Q(t)^{-1}}$ is bounded, we have
	\begin{equation}
		\begin{aligned}
			B_4 &= \int_{0}^{1} { \big\langle  {Q(t)^{-1}} {(F(v(t)) - F(\varphi(t)), h(t)} \big\rangle_{\rho} } \,{\rm d}t\\
			& \leq \int_{0}^{1} { \big\langle  C W^Q(t), h(t) \big\rangle_{\rho} } \,{\rm d}t.
		\end{aligned}\notag
	\end{equation}
	On the set $\{\Vert W^Q \Vert_{L^2_{\rho}} \leq \epsilon\}$, and given that $h(t) \in l^2_{\rho}$, it is straightforward to verify that $B_4  \leq C\epsilon$, and hence
	\begin{equation}\label{32}
		\limsup\limits_{\epsilon \to 0} \mathbb{E}\left({\rm exp}\left\{ cB_4 \right\} \big|\Vert W^Q \Vert_{L^2_{\rho}} < \epsilon \right) = 1 \quad \text{for all}~  c \in \mathbb{R}.
	\end{equation}
	
	For the first term $B_1$, applying Taylor's expansion, we have
	\begin{displaymath}
		\begin{aligned}
			F(v(t)) &= F(\varphi(t) + W^Q(t)) = F(\varphi(t)) + \mathcal{D}_{\varphi(t)} (F) W^Q(t) + R(t),
		\end{aligned}
	\end{displaymath}
	where $\mathcal{D}_{\varphi(t)} (F)$ represents the Fréchet derivative of $F$ at $\varphi(t)$ and $R(t)$ represents the first-order remainder term in the Taylor's expansion. According to $F \in C^2([0,1],l^2_{\rho})$, when $\Vert W^Q(t) \Vert_{L^2_{\rho}} \leq \epsilon$, we have
	\begin{displaymath}
		 {\Vert R(t) \Vert_{L^2_{\rho}}}  \leq C\epsilon^2.
	\end{displaymath}
	Hence, $B_1$ can be written
	\begin{displaymath}
		\begin{aligned}
			B_1 &= \int_{0}^{1} \big\langle {{Q(t)^{-1}}{F(v(t))}}, \,{\rm d}W(t) \big\rangle_{\rho} = \int_{0}^{1} \big\langle {{Q(t)^{-1}} {\left(F(\varphi(t)) + \mathcal{D}_{\varphi(t)} F (W^Q(t)) + R(t) \right)}}, \,{\rm d}W(t) \big\rangle_{\rho}
			\\ &:= B_{11} + B_{12} + B_{13}.
		\end{aligned}
	\end{displaymath}

	The term $B_{11}$ has the same expression as $B_2$:
	\begin{displaymath}
		\begin{aligned}
			B_{11} = \int_{0}^{1} \big\langle {{Q(t)^{-1}} {F(\varphi(t))}}, \,{\rm d}W(t) \big\rangle_{\rho}.
		\end{aligned}
	\end{displaymath}
	Due to $F \in C^2_b \left( l^2_{\rho}, l^2_{\rho} \right)$, we can show that ${Q(t)^{-1}} {F(\varphi(t))}  \in L^2([0,1], l^2_{\rho})$. Using the same method as item $B_2$, we have
	\begin{equation}\label{33}
		\limsup\limits_{\epsilon \to 0} \mathbb{E}\left({\rm exp}\left\{ cB_{11} \right\} \big|\Vert W^Q \Vert_{L^2_{\rho}} < \epsilon \right) = 1\quad \text{for all}~c \in \mathbb{R}.
	\end{equation}

	The term $B_{12}$ is a double stochastic integral with respect to $W$. Here, we use the techniques of the Karhunen-Loève expansion and Skorohod integral to transform $W^Q(t)$ into a form of independent $\mathcal{N}(0, 1)$ random variables, and ultimately derive the final result from the Lemma $\ref{lmma 2.9}$. First, based on the definition of $W^Q(t)$ and Theorem \ref{theorem 2.6}, we present the Karhunen-Loève expansion of $\rho W^Q(t) $
	\begin{displaymath}
		\rho W^Q(t) = \sum_{i=1}^{\infty} \left( \int_{0}^{t} {\rho}_i q_i(s) e^{-\alpha_i(t-s)} \, dW_i(s) \right) e_i
		= \sum_{i=1}^{\infty} \sum_{j=1}^{\infty} \lambda_{j,i} x_{j,i} (l_{j,i} \otimes e_i)(t),
	\end{displaymath}
	where $l_{j,i}(t)$ is an orthonormal basis of $L^2([0,1])$, $(l_{j,i} \otimes e_i)(t) := l_{j,i}(t)e_i$ and
	\begin{equation}
		\begin{aligned}
			&\lambda_{j,i}^2 = \mathrm{Var} \left[ \int_{0}^{1} \left( {\int_{0}^{t} {\rho}_i q_i(s) e^{-\alpha_i(t-s)} \mathrm{d}W_i(s)}\right) l_{j,i} (t) \mathrm{d}t \right],\\
			&x_{j,i}(\omega) = \frac{1}{\lambda_i} \int_{0}^{1} \left( {\int_{0}^{t} {\rho}_i q_i(s) e^{-\alpha_i(t-s)} \mathrm{d}W_i(s)}\right) l_{j,i}(t) \mathrm{d}t.
		\end{aligned}\notag
	\end{equation}
	
	Note that $\{l_{j,i} \otimes e_i, j \geq 1, i \geq 1\}$ is an orthonormal basis of $L^2([0, 1]; l^2_{\rho})$ such that for any $i, j \geq 1$: $\text{Cov}((W^Q, l_{j,i} \otimes e_i)_{L^2([0,1]; l^{\infty})}) = \lambda_{j,i}^2$.
	Thus,
	\begin{displaymath}
		\Vert W^Q \Vert^2_{\rho} = \sum_{j=1}^{\infty} \sum_{k=1}^{\infty} \lambda_{j,i}^2 x_{j,i}^2,
	\end{displaymath}
	due to $\alpha_i > 0$ and $\left\lbrace q_i(t) \right\rbrace_{i \in \mathbb{Z}} >0 \in L^2([0, 1]; l^2_{\rho})$ for all $i \in \mathbb{Z}$, we can show that $\sum_{i,j=1}^{\infty} \lambda_{j,i}^2 < +\infty$. 
	
	Consider also
	\begin{displaymath}
		h_{j,i}(s) := \frac{1}{\lambda_{j,i}} \int_{s}^{1} {\rho}_i q_i(t) e^{-\alpha_i(t-s)} l_{j,i}(t) \mathrm{d}t
	\end{displaymath}
	for $ i \geq 1, j \geq 1 $. Then, for any $ i \geq 1 $, $\{h_{j,i}, j \geq 1\}$ is an orthonormal basis of $L^2([0, 1])$. Notice that
	\begin{displaymath}
		\begin{aligned}
			\langle h_{j1,i}, h_{j2,i} \rangle 
			&=\frac{{\rho}_i^2}{\lambda_{j_1,i} \lambda_{j_2,i}} \int_{0}^{1} \left( \int_{s}^{1} q_i(t_1) e^{-\alpha_i(t_1-s)} l_{j_1,i}(t_1) \mathrm{d}t_1\right)  \left( \int_{s}^{1}  q_i(t_2) e^{-\alpha_i(t_2-s)} l_{j_2,i}(t_2) \mathrm{d}t_2\right)  \mathrm{d}s \\ 
			&=\frac{1}{\lambda_{j1,i} \lambda_{j2,i}} \int_{0}^{1} \int_{0}^{1} k_i(t, s) l_{j_1,i}(t) l_{j_2,i}(s) \mathrm{d}t~ \mathrm{d}s \\
			&= \langle l_{j_1,i}, l_{j_2,i} \rangle
		\end{aligned}
	\end{displaymath}
	for any $j_1, j_2 > 1$, where $k_i$ denotes the covariance function defined as follows:
	\begin{displaymath}
		k_i(t, s) = \int_{0}^{t \wedge s}  {\rho}_i^2 q_i(t)e^{-\alpha_i (t-u)} q_i(s)e^{-\alpha_i (s-u)} \mathrm{d}u.
	\end{displaymath}
	Therefore, to prove that $\{h_{j,i}, j \geq 1\}$ is a basis, one only needs to show that if $ h \in L^2([0, 1],l^2_{\rho}) $, for all $ j \geq 1, \left\langle  h_{j,i}, h \right\rangle = 0$, it follows that $ h $ must be zero. This directly follows from the fact that if 
	\begin{displaymath}
		0 = \left\langle  h_{j,i}, h \right\rangle   = \frac{1}{\lambda_{j,i}} \int_{0}^{1} \left( \int_{s}^{1} {\rho}_i q_i(t) e^{-\alpha_j(t-s)} l_{j,i}(t) \mathrm{d}t \right) h(s) \mathrm{d}s
		= \frac{1}{\lambda_{j,i}} \left\langle  l_{j,i}, \varphi^h \right\rangle  
	\end{displaymath}
	for all $ j \geq 1 $, then due to $\varphi^h = 0$ with $\varphi^h(t) = \int_{0}^{t} {\rho}_i q_i(t) e^{-\alpha_j(t-s)}h(s) \mathrm{d}s$, we have $ h \equiv 0 $.
	
	Furthermore,
	\begin{displaymath}
		\begin{aligned}
			x_{j,i} &= \frac{1}{\lambda_{j,i}} \int_{0}^{1} \left( \int_{0}^{t} {\rho}_i q_i(t) e^{-\alpha_j(t-s)} \mathrm{d}W_i(s) \right) l_{j,i}(t) \mathrm{d}t\\
			&= \int_{0}^{1} \left( \frac{1}{\lambda_{j,i}} \int_{s}^{1} {\rho}_i q_i(t)e^{-\alpha_j(t-s)} l_{j,i}(t) \mathrm{d}t \right)  \mathrm{d}W_i(s)\\
			&= I_i(h_{j,i}),
		\end{aligned}
	\end{displaymath}
	where $ I_i(h_{j,i}) = \int_{0}^{1} h_{j,i}(s) \mathrm{d}W_i(s) $.
	
	For clarity, we introduce the following notation. Define $\hat{P} = \rho Q^{-1} F$, which is assumed to be $C_b^2$ in $x$, uniformly in $s \in [0, 1]$. Let $P$ and $R$ be linear operators defined on $L^2([0, 1]; l^2_{\rho})$. For any $f \in L^2([0, 1]; l^2_{\rho})$ and $e_i \in l^2_{\rho}$, the operator $P$ is given by
	\begin{displaymath}
		(P(f \otimes e_i))(s) := f(s)(\mathcal{D}_{\varphi_{(s)}} \hat{P})(e_i).
	\end{displaymath} 
	The operator $R$ is defined as
	\begin{displaymath}
		R(f \otimes e_i) = R_i(f) \otimes e_i,
	\end{displaymath} 
	where
	\begin{displaymath}
		(R_i f)(s) := \int_{s}^{1} \rho_i q_i(t) e^{-\alpha_i(t-s)} f(t) \, \mathrm{d}t.
	\end{displaymath} 
	
	Note that for any $i \geq 1, j \geq 1, h_{j,i} = \frac{1}{\lambda_{j,i}} R_i l_{j,i}$ and $ Q (t) $ is a linear operator about time $ t $ and does not depend on $ x $. Then using the Karhunen-Loève expansion, we have
	\begin{equation}\label{34}
		\begin{aligned}
			B_{12} &= \int_{0}^{1} \left\langle  {{Q(t)^{-1}} \mathcal{D}_{\varphi(t)} F (W^Q(t)) }, \,{\rm d}W(t) \right\rangle_{\rho} \\
			&= \int_{0}^{1} \left\langle \mathcal{D}_{\varphi(t)} \hat{P} (\rho W^Q(t)) ,\mathrm{d}W(t) \right\rangle\\
			&= \sum_{i,j,k=1}^{\infty} \lambda_{j,i} \int_{0}^{1} x_{j,i} \left\langle  P (l_{j,i}(t) \otimes e_i), e_k \right\rangle \mathrm{d}W_k(t).
		\end{aligned}
	\end{equation}
	It is noteworthy that the random variable $ x_{j,i}$ is measurable with respect to $\mathcal{F}_1$. Hence, the introduction of Skorohod integral is necessitated to handle the anticipating stochastic integrals that arise in the above expression. We transition from Itô integrals to Skorohod integrals, capitalizing on the property that they concur on the set \( L^2_a \) of square-integrable adapted processes. For a detailed discussion on Skorohod integrals, refer to \cite{9}. When $k = i$,
	\begin{displaymath}
		\begin{aligned}
			& \quad \int_{0}^{1} x_{j,i} \left\langle  P (l_{j,i}(t) \otimes e_i), e_k \right\rangle \mathrm{d}W_k(t) \\& = \int_{0}^{1} x_{j,i} \left\langle  P (l_{j,i}(t) \otimes e_i), e_i \right\rangle \mathrm{d}W_i(t) \\& = x_{j,i} \sum_{n=1}^{\infty} \left\langle P (l_{j,i}(t) \otimes e_i), h_{n,i} \otimes e_i \right\rangle I_i(h_{n,i}) - \left\langle h_{j,i} \otimes e_i, P (l_{j,i}(t) \otimes e_i) \right\rangle,
		\end{aligned}
	\end{displaymath}
	and when $k \neq i$,
	\begin{displaymath}
		\int_0^1 x_{j,i} \left\langle  P (l_{j,i}(t) \otimes e_i), e_k \right\rangle \mathrm{d}W_k(t) = x_{j,i} \sum_{n=1}^{\infty} \left\langle  P (l_{j,i}(t) \otimes e_i), h_{n,k} \otimes e_k \right\rangle I_k(h_{n,k}).
	\end{displaymath}
	Using the fact that $h_{n,k} = \frac{1}{\lambda_{n,k}} R_k l_{n,k}$ and $I_k(h_{n,k}) = x_{n,k}$, we can write \eqref{34} in the following way:
	\begin{displaymath}
		\begin{aligned}
			B_{12} &= \sum_{(j,i) \neq (n,k)} \frac{\lambda_{j,i}}{\lambda_{n,k}} x_{j,i} 	x_{n,k} \left\langle  P (l_{j,i}(t) \otimes e_i), R_k l_{n,k} \otimes e_k \right\rangle\\
			&\quad + \sum_{j,i} (x_{j,i}^2 - 1) \left\langle  P (l_{j,i}(t) \otimes e_i), R_i l_{j,i} \otimes e_i \right\rangle.
		\end{aligned}
	\end{displaymath}
	Define now the operator $T : l^2_{\mathbb{Z}^2} \rightarrow l^2_{\mathbb{Z}^2}$ by
	\begin{displaymath}
		T_{(j,i),(n,k)} = \frac{\lambda_{j,i}}{\lambda_{n,k}}  \left\langle  P (l_{j,i}(t) \otimes e_i), R_k l_{n,k} \otimes e_k \right\rangle, \quad (j, i), (n, k) \in \mathbb{Z}^2.
	\end{displaymath}
	Then
	\begin{displaymath}
		\begin{aligned}
			B_{12} &= \sum_{(j,i) \neq (n,k)} T_{(j,i),(n,k)} x_{j,i} x_{n,k} + \sum_{j,i} T_{(j,i),(j,i)} (x_{j,i}^2 - 1)\\
			&= \sum_{j,i,n,k} T_{(j,i),(n,k)} x_{j,i} x_{n,k} + \sum_{j,i} T_{(j,i),(j,i)} (x_{j,i}^2 - 2).
		\end{aligned}
	\end{displaymath}
	Due to
	\begin{displaymath}
		\begin{aligned}
			R^*R\left( l_{j,i} \otimes e_i\right) &= \sum_{n=1}^{\infty} ((R^* R)_i l_{j,i}, l_{m,i}) (l_{m,i} \otimes e_i)\\
			&= \sum_{n=1}^{\infty} (R_i l_{j,i}, R_i l_{m,i}) (l_{m,i} 	\otimes e_i) \\
			& = \lambda_{j,i}^2\left( l_{j,i} \otimes e_i\right),
		\end{aligned}
	\end{displaymath}
	and
	\begin{displaymath}
		\begin{aligned}
			\frac{1}{\lambda_{j,i}}R\left( l_{j,i} \otimes e_i\right) = (h_{j,i} \otimes e_i),
		\end{aligned}
	\end{displaymath}
	we have
	\begin{displaymath}
		\begin{aligned}
			T_{(j,i),(n,k)} &= \langle  \lambda_{j,i} P\left( l_{j,i} \otimes e_i\right) , \frac{1}{\lambda_{n,k}} R\left( l_{n,k} \otimes e_k\right)  \rangle\\
			&= \langle  \frac{1}{\lambda_{j,i}} PR^*R\left( l_{j,i} \otimes e_i\right) , \frac{1}{\lambda_{n,k}} R\left( l_{n,k} \otimes e_k\right)  \rangle\\
			&= \langle P R^*(h_{j,i} \otimes e_i), h_{n,k} \otimes e_k \rangle.
		\end{aligned}
	\end{displaymath}
	According to Lemma \ref{lmma 2.9}, we define the self adjoint operator $\tilde{T}=\frac{1}{2} \left(P R^* + \left( P R^*\right)^*  \right) $. When $\tilde{T}$ is a trace operator, we have
	\begin{displaymath}
		\limsup\limits_{\epsilon \to 0} \mathbb{E}\left(\text{exp} \left\lbrace c\sum_{j,i,n,k} T_{(j,i),(n,k)} x_{j,i} x_{n,k}\right\rbrace  \big|\Vert W^Q \Vert_{L^2_{\rho}} < \epsilon \right) = 1 \quad \text{for all}~ c \in \mathbb{R}.
	\end{displaymath}
	Therefore,
	\begin{equation}
		\begin{aligned}
			&\quad \limsup\limits_{\epsilon \to 0} \mathbb{E}\left({\rm exp}\left\{ B_{12} \right\} \big|\Vert W^Q \Vert_{L^2_{\rho}} < \epsilon \right) \\
			& = \limsup\limits_{\epsilon \to 0} \mathbb{E}\left({\rm exp}\left\{ \sum_{j,i} T_{(j,i),(j,i)} (x_{j,i}^2 - 2) \right\} \big|\Vert W^Q \Vert_{L^2_{\rho}} < \epsilon \right)\\
			& = {\rm exp}\left\{ - { Tr\left({\tilde{T}} \right)}  \right\}.
		\end{aligned}\notag
	\end{equation}

	Given that $R(f \otimes e_i) = R_i(f) \otimes e_i$ with
	\begin{displaymath}
		(R_i f)(s) := \int_{s}^{1} {\rho}_i q_i(t) e^{-\alpha_i(t-s)} f(t) \mathrm{d}t.
	\end{displaymath}
	We have $R^*(f \otimes e_i) = R^*_i(f) \otimes e_i$ with
	\begin{displaymath}
		(R_i^* f)(s) = \rho_i q_i(s) \int_0^s e^{-\alpha_i(s-t)} f(t) \, \mathrm{d}t.
	\end{displaymath}
	Then,
	\begin{displaymath}
		\begin{aligned}
			(PR^*)(f \otimes e_i)(s) &= \sum_{n=1}^{\infty} \left( \rho_i^2 \int_0^s e^{-\alpha_i(s-t)} f(t) \, \mathrm{d}t \left\langle \mathcal{D}_{\varphi_{(s)}} F(e_i), e_n \right\rangle(s) \right)  e_n
			\\(PR^*)^*(f \otimes e_i)(s) &= \sum_{n=1}^{\infty} \left( \rho_i^2 \int_s^1 e^{-\alpha_i(t-s)} f(t) \left\langle \mathcal{D}_{\varphi_{(t)}} F(e_i), e_n \right\rangle (t) \, \mathrm{d}t \right) e_n
		\end{aligned}\notag
	\end{displaymath}
	Thus, we have
	\begin{align*}
		&\quad \sum_{i,j=1}^{\infty}  \frac{1}{2} \left( P R^* + (P R^*)^* \right) (f \otimes e_i)(s)\\
		&= \frac{\rho_i^2}{2} \sum_{i,j=1}^{\infty} \left(  \int_0^1 e^{-\alpha_i(s-t)} f(t) \left\langle \mathcal{D}_{\varphi_{(s)}} F(e_i), e_i \right\rangle(s) 1_{[0,s]}
		+ e^{-\alpha_i(t-s)} f(t) \left\langle \mathcal{D}_{\varphi_{(s)}} F(e_i), e_i \right\rangle (t) 1_{[s,1]} \, \mathrm{d}t \right) e_i\\
		&= \frac{\rho_i^2}{2} \sum_{i,j=1}^{\infty} \left(  \int_0^1 e^{-\alpha_i|s-t|}  \left\langle \mathcal{D}_{\varphi_{(t \vee s)}} F(e_i), e_i \right\rangle(t \vee s)  f(t) \, \mathrm{d}t \right) e_i\\
		&=\frac{\rho_i^2}{2} \sum_{i,j=1}^{\infty} \left(  \int_0^1 \bar{K}(s,t)  f(t) \, \mathrm{d}t \right) e_i.
	\end{align*}
	Last, we have
	\begin{align*}
		&\text{Tr} \left( \frac{1}{2} \left( P R^* + (P R^*)^* \right) \right) \\
		&= \sum_{i,j=1}^{\infty} \left\langle \frac{1}{2} \left( P R^* + (P R^*)^* \right) ({l}_{j,i} \otimes e_i), {l}_{j,i} \otimes e_i \right\rangle \\
		&=\frac{\rho_i^2}{2} \sum_{i,j=1}^{\infty} \left(  \int_0^1 \bar{K}(t,t)  \left\langle {l}_{j,i}, {l}_{j,i}\right\rangle  \, \mathrm{d}t \right)\\
		&= \frac{\rho_i^2}{2} \sum_{i,j=1}^{\infty} \left(  \int_0^1   \left\langle \mathcal{D}_{\varphi_{(t)}} F({l}_{j,i} \otimes e_i), {l}_{j,i} \otimes e_i \right\rangle(t)  \, \mathrm{d}t \right)\\
		&= \frac{1}{2} \int_0^1 \text{Tr}(\rho^2 \mathcal{D}_{\varphi(t)} F) \mathrm{d}t.
	\end{align*}
	We define $\text{Tr}_{\rho} (\mathcal{D}{\varphi(t)} F)$ as $\text{Tr}_{\rho} (\mathcal{D}{\varphi(t)} F) := \text{Tr}(\rho^2 \mathcal{D}_{\varphi(t)} F)$. Therefore,
	\begin{equation}\label{35}
		\begin{aligned}
			\quad \limsup\limits_{\epsilon \to 0} \mathbb{E}\left({\rm exp}\left\{ B_{12} \right\} \big|\Vert W^Q \Vert_{L^2_{\rho}} < \epsilon \right) 
			= {\rm exp}\left\{ -\frac{1}{2} \int_{0}^{1} { \text{Tr}_{\rho} \left({\mathcal{D}_{\varphi(t)}F} \right)} \,{\rm d}t  \right\}
		\end{aligned}
	\end{equation}
	for all $c \in \mathbb{R}$.
	
	Finally, we study the behaviour of the term $B_{13}$. For any $c \in \mathbb{R}$ and $\delta > 0$, we have
	\begin{equation}\label{36}
		\begin{aligned}
			\mathbb{E} \left( {\rm exp} \left\{ cB_{13} \right\} \big| \Vert W^Q \Vert_{L^2_{\rho}} \leq \epsilon \right) 
			&= \int_{0}^{\infty} {e^x \mathbb{P}\left( \left| c\int_{0}^{1} \left\langle {Q(t)^{-1}} {R(t)}, \,{\rm d}W(t) \right\rangle_{\rho} \right| > x \big| \Vert W^Q \Vert_{L^2_{\rho}} \leq \epsilon \right)} \,{\rm d}x
			\\ &\leq \int_{\delta}^{\infty} {e^x \mathbb{P}\left( \left| c\int_{0}^{1} \left\langle  {Q(t)^{-1}} {R(t)}, \,{\rm d}W(t) \right\rangle_{\rho} \right| > x \big| \Vert W^Q \Vert_{L^2_{\rho}} \leq \epsilon \right)} \,{\rm d}x 
			\\ &\quad + e^{\delta} \mathbb{P}\left( \left| c\int_{0}^{1} \left\langle  {Q(t)^{-1}} {R(t)}, \,{\rm d}W(t) \right\rangle_{\rho} \right| > \delta \big| \Vert W^Q \Vert_{L^2_{\rho}} \leq \epsilon \right).
		\end{aligned}
	\end{equation}
	Define the martingale $M_t = c\int_{0}^{t} \left\langle  {Q(s)^{-1}} {R(s)}, \,{\rm d}W(s) \right\rangle_{\rho}$. We have the estimate about its quadratic variation
	\begin{displaymath}
		\langle M_t \rangle = c^2\int_{0}^{t} {\Vert {Q(s)^{-1}}  R(s) \Vert^2_{\rho}} \,{\rm d}s \leq C \epsilon^4
	\end{displaymath}
	for some $C > 0$. Using the exponential inequality for martingales, we have
	\begin{displaymath}
		\mathbb{P}\left( \left| c\int_{0}^{1} \left\langle  {Q(t)^{-1}} {R_t}, \,{\rm d}W_t \right\rangle_{\rho} \right| > x, \Vert W^Q \Vert_{L^2_{\rho}} \leq \epsilon \right) \leq {\rm exp}\left\{ -\frac{x^2}{2C\epsilon^4} \right\}.
	\end{displaymath}
	Below we estimate $\mathbb{P}\left( \Vert W^Q \Vert_{L^2_{\rho}} \leq \epsilon \right)$,
	\begin{displaymath}
		\begin{aligned}
			{\rho} W^Q &=  \sum_{i=1}^{\infty} \left( \int_{0}^{t} {\rho}_i q_i(s) e^{-\alpha_i(t-s)} \, dW_i(s) \right) e_i\\
			& \leq \sum_{i=1}^{\infty}\max_{t \in [0,1]}\left( {\rho}_i q_i(s) \right) \int_{0}^{t}  e^{-\lambda(t-s)} \, dW_i(s)\\
			& \leq  M  \int_{0}^{t}  e^{-\lambda(t-s)} \, dW_i(s) := M  Y(t),
		\end{aligned}
	\end{displaymath}
	where $M$ is a constant and $Y(t)$ is a centered mean-square continuous stochastic process. So $\mathbb{P}\left( \Vert W^Q \Vert_{L^2_{\rho}} \leq \epsilon \right) \geq \mathbb{P}\left( \Vert Y(t) \Vert_{L^2} \leq \frac{\epsilon}{M} \right)$. Using Karhunen-Loève expansion, we have
	\begin{displaymath}
		K(t, s) = \int_0^{t \wedge s} e^{-\lambda(t-u)} e^{-\lambda(s-u)} \, du = \frac{1}{2\lambda} \left( e^{-\lambda(t \vee s - t \wedge s)} - e^{-\lambda(t+s)} \right),\quad s,t \in [0,1].
	\end{displaymath}
	
	To determine the eigenvalues and the eigenvectors of the symmetric operator associated with the kernel $K$ in $L^2([0, 1])$, we need to solve the equation
	\begin{displaymath}
		\int_0^1 K(t, s) h(t) \, dt = \mu h(s), \quad 0 \leq s \leq 1,
	\end{displaymath}
	Differentiating twice, we find that $h$ satisfies
	\begin{equation}\label{37}
		(\lambda^2 \mu - 1) h(s) = \mu h''(s), \quad 0 \leq s \leq 1,
	\end{equation}
	with initial conditions $h(0) = 0$ and $\lambda h(1) = -h'(1)$. Observe that $\eqref{37}$ clearly implies that $\mu \neq 0$.
	
	Set $\beta = \frac{\mu}{\lambda^2 \mu - 1}$ and assume $\beta$ is well-defined and strictly negative. Then the solution of $\eqref{37}$  takes the form
	\begin{displaymath}
		h(s) = c_1 \sin \left( \frac{s}{\sqrt{|\beta|}} \right) + c_2 \cos \left( \frac{s}{\sqrt{|\beta|}} \right).
	\end{displaymath}
	The condition $h(0) = 0$ implies $c_2 = 0$, and $\lambda h(1) = -h'(1)$ yields
	\begin{displaymath}
		\tan \left( \frac{1}{\sqrt{|\beta|}} \right) = -\frac{1}{\lambda} \frac{1}{\sqrt{|\beta|}}.
	\end{displaymath}
	Set $\gamma = |\beta|^{-1/2}$. The relation $\beta = \frac{\mu}{\lambda^2 \mu - 1}$ implies that the eigenvalues of the operator $K$ form a family $\{\mu_i; i \geq 1\}$, where $\mu_i = \frac{1}{\lambda^2 + \gamma_i^2}$ and $\gamma_i$ is the solution of the equation $\tan(\gamma) = -\frac{\gamma}{\lambda}$ in the interval $[ \frac{(2i-1)\pi}{2}, \frac{(2i+1)\pi}{2} ]$. And the orthonormalized eigenfunctions are of the form $g_i(s) = A_i \mathrm{sin}(\gamma_is), i \geq 1$, where $\Vert A_i \Vert \leq 2$.

	Therefore, it is straightforward to verify that the conditions required by Lemma $\ref{lemma 2.13}$ are satisfied. Consequently, we can obtain the desired result by applying the Lemma $\ref{lemma 2.13}$. However, for convenience, we use the approximation $\mu_t = \frac{1}{\lambda^2 + \gamma_t^2} \sim t^{-2}$ to approximate the estimate, where $\gamma_t$ is the solution of the equation $\tan(\gamma) = -\frac{\gamma}{\lambda}$ in the interval $\left[ \frac{(2t-1)\pi}{2}, \frac{(2t+1)\pi}{2} \right]$. By applying Corollary $\ref{corollary 2.11}$, we obtain the following result:
	\begin{equation}\label{38}
		\begin{aligned}
			\mathbb{P}\left( \left| c\int_{0}^{1} \left\langle  {Q(t)^{-1}}  {R_t}, \,{\rm d}W_t \right\rangle_{\rho} \right| > x \big| \Vert W^Q \Vert_{L^2_{\rho}} \leq \epsilon \right) &= \frac{\mathbb{P}\left( \left| c\int_{0}^{1} \left\langle  {Q(t)^{-1}}  {R_t}, \,{\rm d}W_t \right\rangle_{\rho} \right| > x , \Vert W^Q \Vert_{L^2_{\rho}} \leq \epsilon \right)}{\mathbb{P}\left( \Vert W^Q \Vert_{L^2_{\rho}} \leq \epsilon \right)} \\
			&\leq \frac{C_1}{ \epsilon^{2}} {\rm exp}\left\{ -\frac{x^2}{2C\epsilon^4} + \frac{8M^{\frac{2}{3}}}{3\epsilon^{\frac{2}{3}}}  \right\},
		\end{aligned}
	\end{equation}
	where $C$ and $C_1$ is a positive constant.
	
	Taking the limit in $\eqref{38}$ and substituting it into $\eqref{36}$, we have
	\begin{equation}\label{39}
		\limsup\limits_{\epsilon \to 0} \mathbb{E}\left({\rm exp}\left\{ cB_{13} \right\} \big|\Vert W^Q \Vert_{L^2_{\rho}} < \epsilon \right) = 1
	\end{equation}
	for all $c \in \mathbb{R}$, as $\epsilon \to 0$ and $\delta \to 0$.
	
	Finally, by Lemma $\ref{lemma 2.7}$ and inequalities  $\eqref{29}$ - $\eqref{33}$, $\eqref{35}$ and $\eqref{39}$, we have
	\begin{displaymath}
		\begin{aligned}
			\lim\limits_{\epsilon \to 0} \frac{\mathbb{P}\left(\Vert X_t -\varphi_t \Vert_{L^2_{\rho}} \leq \epsilon\right)}{\mathbb{P}\left(\Vert W^Q \Vert_{L^2_{\rho}} \leq \epsilon\right)} = {\rm exp} \left\{  -\frac{1}{2} \int_{0}^{1} \Vert Q(t)^{-1} \left({\dot{\varphi}(t) + A\varphi(t) - F(\varphi(t))} \right) \Vert^2_{\rho} \,{\rm d}t - \frac{1}{2} \int_{0}^{1} { \text{Tr}_{\rho} \left(\mathcal{D}_{\varphi(t)}F \right)} \,{\rm d}t \right\}.	
		\end{aligned}\notag
	\end{displaymath}
	Consequently,
	\begin{displaymath}
		\int_{0}^{1} OM(\varphi, \dot{\varphi}) \,{\rm d}t =  \int_{0}^{1} \Vert Q(t)^{-1} \left({\dot{\varphi}(t) + A\varphi(t) - F(\varphi(t))} \right) \Vert^2_{\rho} \,{\rm d}t + \int_{0}^{1} { \text{Tr}_{\rho} \left(\mathcal{D}_{\varphi(t)}F \right)} \,{\rm d}t.
	\end{displaymath}
	\end{proof}

\section{Numerical experiments}
The study of stochastic lattice equations in this paper holds significant importance in biomathematics, particularly in modeling the spatial spread of diseases within populations and evaluating the effectiveness of various preventive measures. The Onsager-Machlup functional, as a tool for describing the most probable path of a stochastic process, offers several benefits: determining the most likely evolution path of disease spread by calculating key parameters such as infection and recovery rates; assessing the effectiveness of different preventive strategies; and quantifying the impact of environmental noise and stochastic perturbations on the spread of disease. These insights can help design more effective preventive measures.
\begin{example}
	Let's establish a disease transmission model:
	\begin{equation}\label{40}
		\frac{\mathrm{d} u_{i}(t)}{\mathrm{d} t} = 0.1 \left( u_{i-1} - 2u_{i} + u_{i+1} \right) - 0.4 u_{i} - 0.1 u_{i}^3 +  0.01 \left( 31 - t + \frac{1}{|i| + 1} \right)  \frac{\mathrm{d} w_{i}(t)}{\mathrm{d} t}, \quad i \in \mathbb{Z},
	\end{equation}
	where the diffusion coefficient $\nu = 0.1$ represents the rate of diffusion of infected individuals between neighboring locations; the decay coefficient $\lambda = 0.4$ denotes the rate of recovery or death; the nonlinear interaction term $f(u_i) = 0.1 u_i^3$ models contact transmission among infected individuals; the external intervention $g = 0$ accounts for preventive measures; the noise intensity $q_i(t) = 0.01(31 - t + \frac{1}{|i| + 1})$ characterizes environmental stochastic perturbations; and the initial conditions are given by $u_i(0) = 0.6 \exp \left( -\frac{i^2}{2\sigma^2} \right)$, where $\sigma = 8$.
	
	First, it is easy to verify that $f (u)= 0.1 u^3$ satisfies the conditions $(\mathrm{f}1)$ and $(\mathrm{f}2)$. Then, according to the definition of $q_{i}(t)$, we can verify that $Q(t)$ is a non-degenerate linear operator, $Q(t)$ and $Q(t)^{-1}$ are bounded for all $t \in [0,30]$. Finally according to Theorem $\ref{T3.1}$, $u(t)$ is the only bounded solution of equation $\eqref{40}$, so it can be proven that
	\begin{displaymath}
		\mathbb{E}\left[\int_{0}^{30} \exp\left\lbrace \frac{1}{2} \Vert Q(t)^{-1} F(u(t))\Vert^2 \right\rbrace  dt\right] < \infty.
	\end{displaymath}
	In summary, the condition of Theorem $\ref{T4.1}$ holds, and we can obtain the Onsager-Machlup functional as follows:
	\begin{displaymath}
		\begin{aligned}
			OM(\varphi, \dot{\varphi}) &=   \left| 	Q(t)^{-1} 	\left({\dot{\varphi}(t) + A\varphi(t) - F(\varphi(t))} \right) \right|^2  + { Tr\left(\mathcal{D}_{\varphi(t)}F \right)} \\
			&= \sum_i  \left| \frac{1}{0.01(31 - t + \frac{1}{|i| + 1})} \left( \dot{\varphi}_i(t) - 0.1 (\varphi_{i-1}(t) - 2\varphi_{i}(t) + \varphi_{i+1}(t)) + 0.4 \varphi_i(t) + 0.1 \varphi_i(t)^3 \right) \right|^2 
			\\ & \quad - 0.3 \sum_i  \varphi_i(t)^2  
			.
		\end{aligned}
	\end{displaymath}
	To solve the Onsager-Machlup functional using the Euler-Lagrange equations, we need to calculate the derivatives of each term, particularly focusing on the derivatives with respect to $\varphi_i(t)$ and $\dot{\varphi}_i(t)$.\\
	Derivative with respect to $\dot{\varphi}_i(t)$
	\begin{displaymath}
		\frac{\partial OM}{\partial \dot{\varphi}_i(t)} = \frac{2 \left( \dot{\varphi}_i(t) - 0.1 (\varphi_{i-1}(t) - 2\varphi_{i}(t) + \varphi_{i+1}(t)) + 0.4 \varphi_i(t) + 0.1 \varphi_i(t)^3 \right)}{ 0.0001\left(31 - t + \frac{1}{|i| + 1} \right)^2}.
	\end{displaymath}
	Further, we have
	\begin{displaymath}
		\begin{aligned}
			\frac{d}{dt} \left( \frac{\partial OM}{\partial \dot{\varphi}_i(t)} \right) &=\frac{ 2\left( \ddot{\varphi}_i(t) - 0.1 (\dot{\varphi}_{i-1}(t) - 2\dot{\varphi}_{i}(t) + \dot{\varphi}_{i+1}(t)) + 0.4 \dot{\varphi}_i(t) + 0.3 \varphi_i(t)^2 \dot{\varphi}_i(t)\right) }{ 0.0001\left(31 - t + \frac{1}{|i| + 1} \right)^2}    \\
			&\quad + \frac{0.04 \left( \dot{\varphi}_i(t) - 0.1 (\varphi_{i-1}(t) - 2\varphi_{i}(t) + \varphi_{i+1}(t)) + 0.4 \varphi_i(t) + 0.1 \varphi_i(t)^3 \right) }{ 0.000001\left(31 - t + \frac{1}{|i| + 1} \right)^3}.
		\end{aligned}
	\end{displaymath}
	Derivative with respect to $\varphi_i(t)$
	\begin{displaymath}
		\begin{aligned}
			\frac{\partial OM}{\partial \varphi_i(t)} &= \frac{2 \left( \dot{\varphi}_i(t) - 0.1 (\varphi_{i-1}(t) - 2\varphi_{i}(t) + \varphi_{i+1}(t)) + 0.4 \varphi_i(t) + 0.1 \varphi_i(t)^3 \right)}{ 0.0001\left(31 - t + \frac{1}{|i| + 1} \right)^2} \cdot \left(  0.6 + 0.3 \varphi_i(t)^2 \right) - 0.6 \varphi_i(t)\\
			&\quad - \frac{0.2 \left( \dot{\varphi}_{i-1}(t) - 0.1 (\varphi_{i-2}(t) - 2\varphi_{i-1}(t) + \varphi_{i}(t)) + 0.4 \varphi_{i-1}(t) + 0.1 \varphi_{i-1}(t)^3 \right)}{ 0.0001\left(31 - t + \frac{1}{|{i-1}| + 1} \right)^2}\\
			&\quad - \frac{0.2 \left( \dot{\varphi}_{i+1}(t) - 0.1 (\varphi_{i}(t) - 2\varphi_{i+1}(t) + \varphi_{i+2}(t)) + 0.4 \varphi_{i+1}(t) + 0.1 \varphi_{i+1}(t)^3 \right)}{ 0.0001\left(31 - t + \frac{1}{|{i+1}| + 1} \right)^2}.
		\end{aligned}
	\end{displaymath}

	Finally, we substitute these derivatives into the Euler-Lagrange equation:
	\begin{displaymath}
		\frac{d}{dt} \left( \frac{\partial OM}{\partial \dot{\varphi}_i(t)} \right) = \frac{\partial OM}{\partial \varphi_i(t)}.
	\end{displaymath}
	Expanding, we get
	\begin{displaymath}
		\begin{aligned}
			\ddot{\varphi}_i(t) &= \left(  \dot{\varphi}_i(t) - 0.1 (\varphi_{i-1}(t) - 2\varphi_{i}(t) + \varphi_{i+1}(t)) + 0.4 \varphi_i(t) + 0.1 \varphi_i(t)^3  \right)  \cdot \left( 0.6 + 0.3 \varphi_i(t)^2 \right)\\
			&\quad - {0.1 \left( \dot{\varphi}_{i-1}(t) - 0.1 (\varphi_{i-2}(t) - 2\varphi_{i-1}(t) + \varphi_{i}(t)) + 0.4 \varphi_{i-1}(t) + 0.1 \varphi_{i-1}(t)^3 \right)}\\
			&\quad - {0.1 \left( \dot{\varphi}_{i+1}(t) - 0.1 (\varphi_{i}(t) - 2\varphi_{i+1}(t) + \varphi_{i+2}(t)) + 0.4 \varphi_{i+1}(t) + 0.1 \varphi_{i+1}(t)^3 \right)}\\
			&\quad + \left(  0.1 (\dot{\varphi}_{i-1}(t) - 2\dot{\varphi}_{i}(t) + \dot{\varphi}_{i+1}(t)) - 0.4 \dot{\varphi}_i(t) - 0.3 \varphi_i(t)^2 \dot{\varphi}_i(t)\right) - 0.00003 \varphi_i(t){ \left(31 - t + \frac{1}{|i| + 1} \right)}^2\\
			&\quad -  \frac{2 \left( \dot{\varphi}_i(t) - 0.1 (\varphi_{i-1}(t) - 2\varphi_{i}(t) + \varphi_{i+1}(t)) + 0.4 \varphi_i(t) + 0.1 \varphi_i(t)^3  \right)}{\left( 31 - t + \frac{1}{|i| + 1} \right)},
		\end{aligned}
	\end{displaymath}
	with boundary conditions $\varphi_i(0) = 0.6\exp\left( - \frac{(i)^2}{2\sigma^2} \right)$ and $\varphi_i(30) = 0$. Here we complete the derivation of the Euler-Lagrange equations for the Onsager-Machlup functional. Subsequently, we use numerical methods to solve these equations. It is evident that $\eqref{40}$ satisfies Theorem $\ref{T3.3}$, enabling us to approximate the infinite-dimensional equations with finite-dimensional counterparts. We set $ i \in [-30, 30] $ and $ t \in [0, 30] $. $\mathrm{Fig.} \ref{F.1}$ presents the solution to $\eqref{40}$. $\mathrm{Fig.} \ref{F.2}$ illustrates the solution to the Euler-Lagrange equations of the Onsager-Machlup functional. $\mathrm{Fig.} \ref{F.3}$ shows two-dimensional cross-sectional plots of the solutions at $ i = 0 $ and $ i = 10 $, respectively.
	
	The graphical representations aid in visualizing the differences and similarities between the solutions to $\eqref{40}$ and the Euler-Lagrange equations. By comparing the numerical solutions, we can observe how the Onsager-Machlup functional accurately captures the dynamics of the stochastic system. This deeper understanding of the Onsager-Machlup functional in the context of stochastic lattice equations is crucial. The results validate the effectiveness of our derived Onsager-Machlup functional, which determines the most probable transition path connecting two given states among all possible smooth paths. This validation supports the theoretical framework and demonstrates the practical utility of the Onsager-Machlup functional in solving complex problems in stochastic lattice equations.
	
\begin{figure}[htbp]
	\centering
	\begin{minipage}[htbp]{8cm}
		\centering
		\includegraphics[width=8cm, trim=160pt 20pt 160pt 20pt, clip]{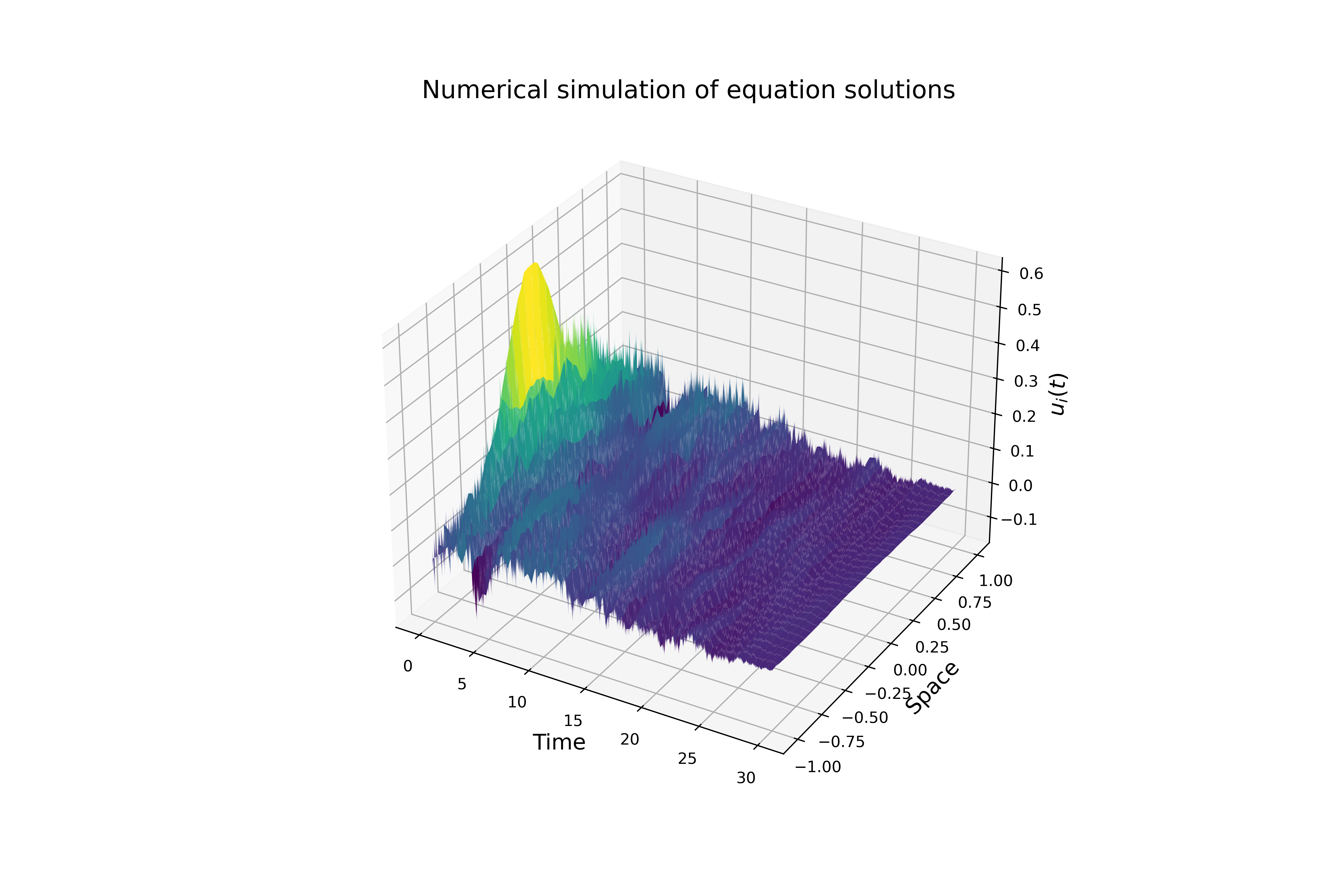}
		\caption{The graph of the solution $u(t)$ to equation $\eqref{40}$ on $t \in [0,30], i \in [-30,30]$.}
		\label{F.1}
	\end{minipage}
	\hspace{0.02\textwidth}
	\begin{minipage}[htbp]{8cm}
		\centering
		\includegraphics[width=8cm, trim=160pt 20pt 160pt 20pt, clip]{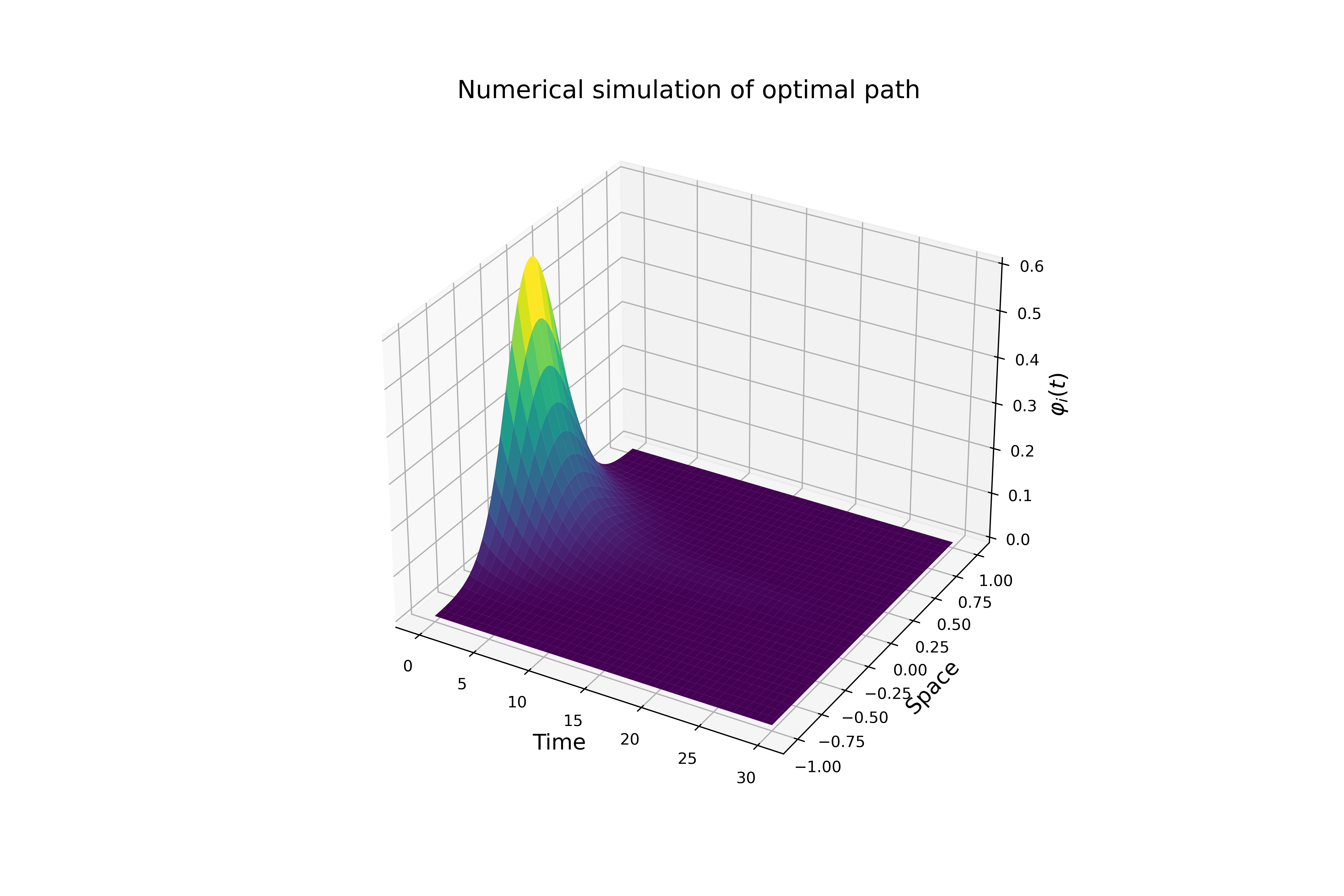}
		\caption{The graph of the optimal path $\varphi (t)$ on  $t \in [0,30], i \in [-30,30]$.}
		\label{F.2}
	\end{minipage}
\end{figure} 
	
	\begin{figure}[htbp]
		\centering
		\includegraphics[width=13cm]{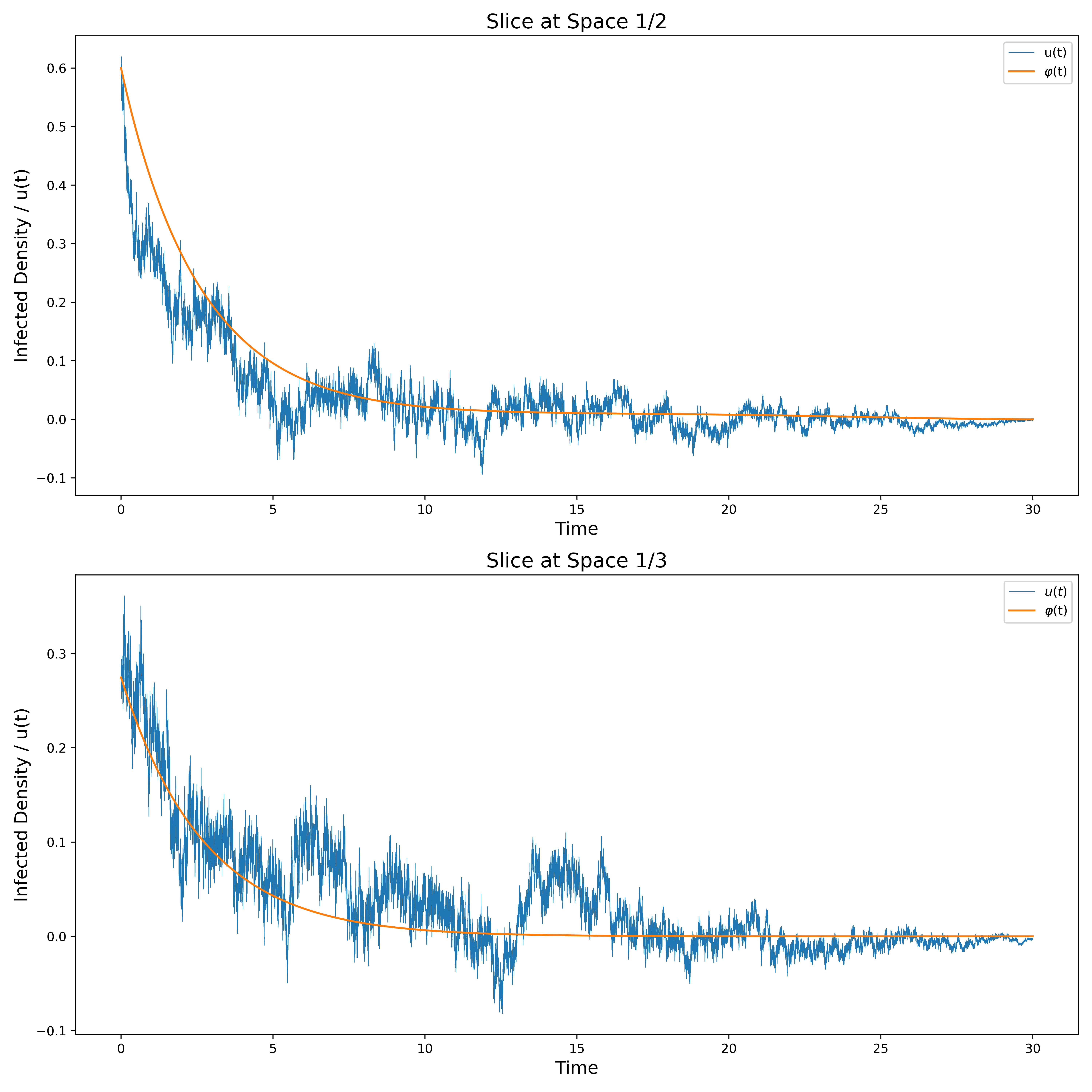}
		\caption{The slice of $\mathrm{Fig.} \ref{F.1}$ and $\mathrm{Fig.} \ref{F.2}$ at $ i = 0 $ and $ i = 10 $, respectively.}
		\label{F.3}
	\end{figure} 
\end{example}

%%%%%%%%%%%%%%%%%%%%%%%%%%%%%%%%%%%%%%%%%%%%%%
%% Funding information, if any,             %%
%% should be provided in the                %%
%% funding section.                         %%
%%%%%%%%%%%%%%%%%%%%%%%%%%%%%%%%%%%%%%%%%%%%%%
\begin{funding}
	The second author (Y. Li) was supported by National Natural Science Foundation of China (Grant No. 12071175, 11171132, 11571065) and Natural Science Foundation of Jilin Province (20200201253JC).
\end{funding}

\bibliographystyle{plain}
\bibliography{ref.bib}

\end{document}